\newtheorem{cor}{Corollary}[section]
\newtheorem{thm}[cor]{Theorem}
\newtheorem{prop}[cor]{Proposition}
\newtheorem{lemma}[cor]{Lemma}
\theoremstyle{definition}
\newtheorem{defi}[cor]{Definition}
\theoremstyle{remark}
\newtheorem{remark}[cor]{Remark}
\renewcommand{\H}{{\mathscr{H}}}
\newcommand{\R}{{\mathbb R}}
\renewcommand{\S}{{\mathbb S}}
\newcommand{\C}{\mathcal{C}}
\newcommand{\hes}{\mbox{Hess}}
\def\res{\mathop{\hbox{\vrule height 7pt width .5pt depth 0pt \vrule height .5pt width 6pt depth 0pt}}\nolimits}
\newcommand{\be}{\begin{eqnarray}}
\newcommand{\ee}{\end{eqnarray}}
\renewcommand{\tilde}[1]{\widetilde{#1}}
\renewcommand{\bar}[1]{\overline{#1}}
\newcommand{\Leb}[1]{{\mathscr L}^{#1}}
\newcommand{\ep}{\varepsilon}
\renewcommand{\hat}[1]{\widehat{#1}}
\newcommand{\tphi}{\hat \phi}
\newcommand{\hd}{\hat d}
\renewcommand{\j}{\underline{j}}
\renewcommand{\i}{\underline{i}}
\newcommand{\Cw}{{C_{\it w}}}
\newcommand{\Cwo}{{C_{\it w,o}}}
\newcommand{\GBV}{{\mathcal{BV}}}
\newcommand{\GM}{{\mathcal{GM}}}
\newcommand{\Si}{\mathcal{S}}
\renewcommand{\angle}{\measuredangle}
\begin{document}

\title{DC Calculus}
\author{Luigi Ambrosio}
\address{Scuola Normale Superiore\\  
    Piazza dei Cavalieri 7 
         56126 Pisa, Italy    }
\email{luigi.ambrosio@sns.it}

\author{J\'er\^ome Bertrand}
\address{Institut de Math\'ematiques de Toulouse, UMR CNRS 5219 \\
Universit\'e Toulouse III \\
31062 Toulouse cedex 9, France}
\email{bertrand@math.univ-toulouse.fr}


\begin{abstract}
In this paper, we extend the DC Calculus introduced by Perelman on finite dimensional Alexandrov spaces with curvature bounded below. 
Among other things, our results allow us to define the Hessian and the Laplacian of DC functions (including distance functions as a particular instance) as a measure-valued tensor and a Radon measure respectively. We show that these objects share various properties with their analogues on smooth Riemannian manifolds.
\end{abstract}
\maketitle
\tableofcontents

\section{Introduction}

In this paper, we investigate the $DC$ Calculus introduced by Perelman in \cite{DC} on the manifold part $X^*$ of a finite dimensional Alexandrov space $X$ with curvature bounded below. The term ``$DC$'' stands for difference of concave functions. The main point is that, contrary to the set of (semi-)concave functions, the set of $DC$ functions on Euclidean space is an algebra. Perelman showed the existence of an atlas on $X^*$ compatible with the distance whose transition maps are Euclidean $DC$ functions. Moreover, he also showed that $DC$ functions on Alexandrov spaces (defined in geometric terms involving geodesics) are standard Euclidean $DC$ functions when read in a chart. This notion is also natural if one considers the particular case of closed convex hypersurfaces in Euclidean space where the standard charts are inverse of convex functions. The $DC$ structure is also considered in \cite{kuwae} in connection with properties of the heat kernel on Alexandrov space.

Besides providing a complete proof of the results related to this calculus discussed in \cite{DC}, we also generalize them in some aspects. Note that we choose to present our results in the case of tensors (mainly covariant ones) instead of differential forms as it is in the case in the aforementioned papers. However, it is then a standard algebraic matter to infer from our results the corresponding statements on differential forms (up to considering the orientable double cover of $X^*$ if necessary). More precisely, the calculus we develop allows us to consider general bounded $\GBV$ functions (compared to standard $BV$ functions, the $\GBV$ functions satisfy an additional weak continuity property, see (\ref{geomBV})), thus removing the restriction to bounded $\GBV$ functions with no jump part in their derivative (these functions are called $BV_0$ functions in Perelman's paper)\footnote{We invite the reader to compare our Section~\ref{DC0} with lemma p5 in 
Perelman's paper \cite{DC}, and our Section~\ref{compacond} with Section 4.3 in the same paper} as in \cite{DC,kuwae}. The main result on tensors can be summarized as the existence of a covariant derivative operator on tensors $S$ with $\GBV$ components (thus $DS$ is a measure-valued tensor) that satisfies the same properties as on a smooth Riemannian manifold. We are also able to define the norm of a measure-valued tensor that coincides with the standard norm in the case of function-valued tensor and gives us, for example, an intrinsic notion of total variation $|df|$ for a function of bounded variation $f$ on $X^*$. 
 
Our particular interest in covariant tensors stems from our will to define the Hessian of a distance function $d_p$ as a measure-valued tensor. Indeed, even in the case of a smooth Riemannian manifold, the distributional Hessian $\hes \,d_p$ of $d_p$ is {\it not}, in general, absolutely continuous with respect to the volume measure, see for instance \cite{mantegazza} for a nice and comprehensive discussion of this fact. The extra term in $\hes \,d_p$ is of jump type, namely it is concentrated on the Cut Locus of $p$ which is $(N-1)$-dimensional in general, being $N$ the dimension of the Riemannian manifold. For example, this phenomenon arises on the real projective space, details can be found in \cite{GHL}. Consequently, even on a smooth Riemannian manifold, the Riemannian gradient $\nabla d_p$ does not belong to the class $BV_0$ defined by Perelman in general. Our results allow us to define the Hessian of any distance function $d_p$ as a particular instance of a result for arbitrary $DC$ functions. This generalization requires fine properties of $BV$ functions and a thorough study of the jump part of the Hessian. Technically speaking, the primary issue is that, given two $BV$ functions $f,\,h$ defined on an open set of Euclidean space, the standard chain rule 
$$ \frac{\partial (fh)}{\partial x_i} = f \frac{\partial h}{\partial x_i} +   h \frac{\partial f}{\partial x_i}$$
does {\it not} hold true when $f,\,h$ have non-trivial jump parts in their derivatives (see however Lemma~\ref{lemmaproduct}). However, even when
jump parts in the derivatives are present, it may happen that cancellation properties occur, so that some standard formulas and definitions of tensor calculus retain their
validity (see Proposition~\ref{FormHessian} and  Proposition~\ref{gamma2} as a particular instances of this phenomenon).

As a consequence of this extended $DC$ Calculus, we also prove that the $DC$ Laplacian (namely the trace of Hessian with respect to the Riemannian metric) of $DC$ functions coincides with the weak Laplacian defined through integration by parts (see Proposition~\ref{IBP}). This fact has been used in \cite{LVPet} without an
explicit proof. 

We are also able to prove that for a $DC$ function $f$ and $\GBV$ vector fields $X,\,Y$, the Hessian of $f$ is related to the covariant derivative in the same way as in the smooth case in the sense that
$$ \hes f (X,Y)= D(df(Y))(X)- df(D_XY)$$
(where $D$ stands either for the differential of function or the (measure-valued) covariant derivative of vector field, depending on the context). We also prove that the classical formula
$$ D_{\nabla \psi} \nabla \psi= \frac{1}{2} \nabla (|\nabla \psi|_g ^2)$$
remains valid in this setting.

The formula above is then used to prove that the Hessian we define satisfies an integration by parts formula in the same spirit as in the $\Gamma_2$ calculus 
\cite{Bakry-94}, see Proposition~\ref{gamma2}. This consistency result is particularly relevant in connection with the coordinate-free approach
to calculus in metric measure spaces, which works particularly well under Ricci lower bounds on curvature, still expressed in terms of
the $\Gamma_2$ tensor, see \cite{Gigli}. We know after 
\cite{LVPet} and \cite{AGSAP} that spaces with sectional curvature bounded from below satisfy Ricci lower bounds in this sense, see also \cite{Sturm} 
for a very nice application of these calculus tools to Alexandrov spaces. Nevertheless the connection
between this viewpoint and the one developed in this paper is not yet completely clear, since the calculus developed in \cite{Gigli,AGSAP} allows to handle, 
so to speak, only sections of the tangent bundle defined up to $N$-negligible sets, while the calculus in coordinates allows to handle 
sections of the tangent bundle defined up to $(N-1)$-dimensional sets.

The $DC$ Calculus allows Perelman to improve an important result of Otsu and Shioya in \cite{OS} where the authors prove that the distance on Alexandrov space $X$ derives from a continuous (in a weak sense, see Theorem B in \cite{OS}) Riemannian metric defined almost everywhere on $X^*$. 
Perelman proves that the Riemannian metric components, when read in a chart, are actually functions of local bounded variation. In \cite{ABSurface}, we 
 improve these results in the case of surfaces.

This paper is organized as follows. In the first part, we introduce the function spaces related to the $DC$ Calculus. We then introduce the notion of $DC_0$ Riemannian manifold, which provides a natural setting where the $DC$ Calculus can be defined. In the subsequent part, we study the covariant tensors and their covariant derivatives on a $DC_0$ Riemannian manifold. Due to the low regularity, we use the old-fashioned approach consisting in defining tensors in charts and imposing a compatibility condition (see \cite{SpivakI} for more on this point of view). The antepenultimate section is devoted to the Laplacian and Hessian of a $DC$ function. The penultimate one is devoted to the integration by parts formula involving Hessian. In the last part, we establish that the subset $X^*$ of a finite dimensional Alexandrov space $X$ is indeed a $DC_0$ Riemannian manifold.

\section{Function spaces for $DC$ Calculus}

\subsection{$DC$, $BV$ and $\Cw$ functions}

\begin{defi}[DC functions] Let $\Omega \subset \R^N$ be an open set. A function  $f: \Omega \rightarrow \R$ is said to be a DC function on $\Omega$ if it is locally representable in $\Omega$ as a difference of two semiconcave functions.
\end{defi}

A similar definition will be used for vector-valued maps, arguing componentwise.
Since SC functions are defined by a local property, a simple partition of unity shows that DC functions in an open set can also be globally
represented by the difference of two SC functions. It is also easy to check that the class of DC functions is a vector space. 
In the note \cite{hartman}, Hartman proved that the space of DC functions is indeed an algebra, i.e. stable under multiplication. A more
general statement, that encompasses also this property, is given below.

\begin{thm}[Stability of DC]
Given $f: U \subset \R^M \longrightarrow V \subset \R^N$ and $g: V  \longrightarrow \R$  with $U$ and $V$ two open sets, the
function $g \circ f$ is DC on $U$ if $f$ and $g$ are DC on $U$ and $V$ respectively. As a consequence, the set of 
real-valued DC functions on $U$ is an algebra.  
\end{thm}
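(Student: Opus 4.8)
The plan is to prove the composition statement $g \circ f$ is DC, and then derive the algebra property as a corollary. The key structural fact to exploit is that a function is DC if and only if it is locally the difference of two semiconcave functions, equivalently (locally) the difference of two convex functions, so I would first reduce to the convex case via the local equivalence of semiconcavity and "convex minus a smooth quadratic." The heart of the matter is that semiconcavity (equivalently, semiconvexity) of a composition $g \circ f$ should follow from a Taylor-type second-order estimate: if $f$ has components that are differences of convex functions and $g$ is similarly DC with a Lipschitz-type control on its first derivatives, then the second-order increments of $g \circ f$ can be bounded by combining the second-order behaviour of $g$ (controlled by its DC structure) with the first- and second-order behaviour of $f$.

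\medskip

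\emph{First} I would record the elementary closure properties: DC functions form a vector space, and the maximum and minimum of two DC functions are DC (since $\max(a,b) = \tfrac12(a+b) + \tfrac12|a-b|$ and $|a-b|$ is DC, being convex in $a-b$). \emph{Next}, I would localize: since both DC and the conclusion are local properties, it suffices to work on a small ball $B \subset U$ on which $f$ is Lipschitz (DC functions are locally Lipschitz) and write each component $f_k = p_k - q_k$ with $p_k, q_k$ convex, and likewise represent $g = P - Q$ with $P, Q$ convex on a neighborhood of $f(\bar B) \subset V$. The technical core is to show that $g \circ f$ is semiconcave on $B$, i.e.\ that $x \mapsto (g\circ f)(x) - C|x|^2$ is concave for $C$ large. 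The cleanest route is to estimate the second difference
\[
\Delta^2_h (g\circ f)(x) := (g\circ f)(x+h) + (g\circ f)(x-h) - 2(g\circ f)(x)
\]
and show it is bounded above by $C|h|^2$. Using the convexity/concavity bounds on $P$, $Q$ and the Lipschitz plus semiconcavity bounds on the $f_k$, one expands $\Delta^2_h(g \circ f)$ by inserting the intermediate point $f(x+h), f(x-h), f(x)$ and controls the resulting terms; the first-order Lipschitz bound on $g$ absorbs the second-order (semiconcavity) defect of the $f_k$, while the semiconcavity bound on $g$ together with the Lipschitz bound on $f$ handles the purely second-order contribution along $f$.

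\medskip

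\emph{The main obstacle} will be that DC functions are merely Lipschitz, not $C^2$, so one cannot differentiate $g \circ f$ directly; the argument must proceed entirely through finite second differences (or equivalently through the Aleksandrov second-order a.e.\ differentiability of convex functions combined with Fubini-type integral representations of the second difference). In particular, the cross-term arising from the product of the first derivative of $g$ with the second difference of $f$ requires care because $g$ is only differentiable almost everywhere; one resolves this by using the convexity of $P$ and $Q$ to get one-sided inequalities that survive under composition, or by approximating $g$ and $f$ by smooth DC functions (mollification preserves convexity up to controlled constants), proving the quadratic second-difference bound for the smooth approximants with uniform constant $C$, and passing to the limit. \emph{Finally}, the algebra statement follows by taking $M = N$ and applying the composition result to the multiplication map: writing $f = (f_1, f_2): U \to \R^2$ with DC components and $g(y_1, y_2) = y_1 y_2$, which is DC on $\R^2$ (indeed smooth, hence locally DC), one concludes $f_1 f_2 = g \circ f$ is DC, so the vector space of DC functions is closed under products.
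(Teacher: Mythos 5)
Your plan contains a genuine gap at its declared technical core. You propose to prove that $g\circ f$ is semiconcave on a small ball, i.e.\ that the second differences satisfy $\Delta^2_h(g\circ f)(x)\le C|h|^2$. No argument can establish this, because it is false: the composition of DC functions is DC but in general not semiconcave. Take $M=N=1$, $f(x)=x$ and $g(y)=|y|$ (convex, hence DC); then $(g\circ f)(x)=|x|$ and $\Delta^2_h(g\circ f)(0)=2|h|$, which is not $O(|h|^2)$. The same failure persists after your reduction $g=P-Q$ with $P,Q$ convex: $P\circ f$ need not be semiconcave either (same example, with $P(y)=|y|$). The source of the error is visible in your own heuristic: in the formal expansion $D^2(g\circ f)=\sum_k\partial_k g\,D^2f_k+D^2g(Df,Df)$, the second term is indeed controlled by the semiconcavity of $g$ and the Lipschitz bound on $f$, but the cross term $\sum_k\partial_k g\,D^2 f_k$ is \emph{not} bounded above by a constant: $\partial_k g$ has no sign, and $D^2 f_k$ is a signed measure which may charge $(M-1)$-dimensional sets, so this term cannot be absorbed into $C|h|^2$. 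Mollification cannot rescue the claim either, since a uniform semiconcavity constant for the approximants would pass to the limit, which is not semiconcave.

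What is needed is a different key idea, the convex corrector of Hartman \cite{hartman} --- which is in fact all the paper offers for this theorem: it is stated as a known result with that citation, and no proof is given there. Locally write $f_k=p_k-q_k$ with $p_k,q_k$ convex and Lipschitz, $g=P-Q$ with $P,Q$ convex and $L$-Lipschitz, and set $\Sigma=\sum_k(p_k+q_k)$, a convex function. For $x=\lambda x_1+(1-\lambda)x_2$ one has $f(x)=\lambda f(x_1)+(1-\lambda)f(x_2)+v$, where $\|v\|_1$ is at most the total concavity defect $\lambda\Sigma(x_1)+(1-\lambda)\Sigma(x_2)-\Sigma(x)$; hence, by the Lipschitz bound and the convexity of $P$,
$P(f(x))+L\Sigma(x)\le\lambda\bigl(P(f(x_1))+L\Sigma(x_1)\bigr)+(1-\lambda)\bigl(P(f(x_2))+L\Sigma(x_2)\bigr)$.
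Thus $P\circ f+L\Sigma$ is convex, so $P\circ f=(P\circ f+L\Sigma)-L\Sigma$ is DC; likewise $Q\circ f$, and therefore $g\circ f$ is DC. In other words, the correct second-order estimate is $\Delta^2_h(P\circ f)\ge -L\,\Delta^2_h\Sigma$ --- a bound against the second difference of an auxiliary convex function, not against $|h|^2$. Your final step (the algebra property via the smooth map $(y_1,y_2)\mapsto y_1y_2$) is fine and is the standard deduction, but it rests entirely on the composition theorem, so the gap above is fatal to the proposal as written.
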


It is a classical result that the second distributional derivative of a SC function in an open set $V\subset\R^N$
is a  symmetric matrix-valued Radon measure in $V$
whose positive part is absolutely continuous with respect to the Lebesgue measure $\Leb{N}$, and that SC functions admit 
at almost every point a second order Taylor expansion \cite{Alex39}. In particular, by linearity, the first derivatives of a DC function are functions of locally 
bounded variation. For later use, let us quickly review some basic definitions and results on $BV$ functions on Euclidean spaces.

\begin{defi}[$BV$ functions] Let $V$ be an open set of $\R^N$. A function $f : V \longrightarrow \R$ belongs to $BV(V)$ if
$f\in L^1(V)$ and the derivative in the sense of distributions of $f$ is representable by a vector-valued measure
$$
Df=\bigl(D_1f,\ldots,D_N f)
$$
with finite total variation in $V$. Equivalently, $|Df|(V)<\infty$ and  
$$
\int_V f\frac{\partial\phi}{\partial x_i}\,dx=-\int_V \phi\, dD_if\qquad\forall i=1,\ldots,N,\,\,\,\phi\in Lip_c(V).
$$
Here $Lip _c(V)$ stands for the set of Lipschitz functions with compact support on $V$.\\ 
The function $f$ is said to be in $BV_{loc}(V)$ if it is a $BV$ function on every open set $V'\Subset V$.
 \end{defi}

Again, a similar definition can be given componentwise for maps $f : V \longrightarrow \R^M$
 and in this case $Df$ will be viewed as a $(M\times N)$-matrix of measures $D_if^{(j)}$.
Denoting by $\H^k$, $k>0$, the $k$-dimensional Hausdorff measure (the ambient space is irrelevant and it will not appear in the notation), we 
recall that a set  $E\subset \R^N$ is said to be {\it $\sigma$-finite w.r.t. $\H^k$} if $E$ is the union of an 
increasing sequence of sets with finite $\H^k$ measure. If $k\geq 1$ is an integer, we say that a set $E\subset\R^N$ is 
{\it countably $\H^k$-rectifiable} if there exist countably many closed sets
$C_i\subset\R^k$ and Lipschitz functions $f_i: C_i\longrightarrow\R^N$ such that 
\[ \H^k(E \setminus \bigcup_{i=0}^{\infty} f_i(C_i)) =0.\]

\begin{remark} Throughout this paper, $\langle \cdot,\cdot\rangle$ stands for the standard Euclidean inner product and $\|\cdot\|_2$ for its induced norm.
\end{remark}

 Given a locally integrable function $f$ we say that $x$ is an approximate continuity point of $f$ if there exists
$\tilde f(x)\in\R$ satisfying
\begin{equation}\label{eq:ihp1}
\lim_{\rho \downarrow 0} \fint_{B_\rho(x)} | f(y) - \tilde{f}(x)| \,d\Leb{N}(y)=0,
\end{equation}
where, here and in the sequel, $B_\rho(x)$ denotes the open ball with radius $\rho$ and center $x$ and
$\fint_B f\,d\mu$ denotes the averaged integral, i.e. $\int_B f\,d\mu/\mu(B)$. The complement of the set of approximate continuity
points will be denoted by $S_f$. We shall also use the same notation and concept for vector-valued functions, arguing
componentwise.

\begin{thm}[Decomposition of $Df$]\label{decompo} Let $V\subset\R^N$ be an open set. For all $f\in BV_{loc}(V)$ the following properties hold:
\begin{itemize}
\item[(a)] $|Df|$ vanishes on $\H^{N-1}$-negligible sets.
\item[(b)] $Df$ admits the mutually singular decomposition 
\begin{equation}\label{Bvdecom} 
Df = D^{ac}f+D^{si} f=D^{ac}f+ D^{ju}f + D^{ca} f,
\end{equation}
where  $D^{ac}f$ is the absolutely continuous part w.r.t. Lebesgue measure $\Leb{N}$, $D^{si} f$ is the singular part, which can be further split in 
jump part $D^{ju} f$ and Cantor part $D^{ca} f$, the former concentrated on a set $\sigma$-finite w.r.t. $\H^{N-1}$ and the latter vanishing
on all sets with finite $\H^{N-1}$ measure.
\item[(c)] $D^{ju} f$ is concentrated on the set of approximate jump points $J_f$ and the set $J_f$ is a countably $\H^{N-1}$-rectifiable set. 
Moreover up to a $\H^{N-1}$-negligible set, $J_f$ coincides with the approximate discontinuity set $S_f$ of $f$. 
\item[(d)]  By the very definition of $J_f$, all points $x\in J_f$ one can identify two values $f^\pm(x)$, the so-called one-sided approximate limits, 
such that an approximate continuity 
property holds by averaging along an halfspace passing through $x$.
 More precisely, there exists a Borel function $\nu(x) \in \S^{N-1}$, such that
$$ \lim_{\rho \downarrow 0} \fint_{B_{\rho}^{\pm}(x,\nu(x))} | f(y) - f^{\pm}(x)| \,d\Leb{N}(y)=0,$$
where
$$B_{\rho}^{+}(x,\nu(x)) = \{y \in B_{\rho}(x); \langle y-x,\nu(x)\rangle > 0\}, \,\,
B_{\rho}^{-}(x,\nu(x)) = \{y \in B_{\rho}(x); \langle y-x,\nu(x)\rangle < 0\}.$$
The triple $(\nu(x), f^+(x),f^-(x))$ is uniquely determined up to $\nu(x) \mapsto -\nu(x)$ that maps  $f^+(x)$ to $f^-(x)$ and vice versa. 
The jump part of $Df$ is equal to
$$ D^{ju}f = (f^+(x)-f^-(x))\nu(x) \,\H^{N-1}\res{J_f}.$$
\end{itemize}
\end{thm}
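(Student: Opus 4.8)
The plan is to recognise the statement as the compilation of the classical fine-structure theory of $BV$ functions and to assemble it from three ingredients: the Lebesgue--Radon--Nikodym decomposition of measures, a one-dimensional slicing argument, and the rectifiability theory of De Giorgi and Federer--Vol'pert. I would first dispose of the coarse decomposition in (b). Applying Lebesgue--Radon--Nikodym to the $\R^N$-valued Radon measure $Df$ relative to $\Leb{N}$ yields the unique mutually singular splitting $Df = D^{ac}f + D^{si}f$, with $D^{ac}f = \nabla f\,\Leb{N}$ absolutely continuous and $D^{si}f \perp \Leb{N}$. The finer splitting of $D^{si}f$ I would \emph{define} by setting $D^{ju}f := Df\res J_f$, the restriction to the jump set $J_f$ of part (d), and $D^{ca}f := D^{si}f - D^{ju}f$; the properties claimed for these pieces (concentration of $D^{ju}f$ on a $\sigma$-finite-w.r.t.-$\H^{N-1}$ set and vanishing of $D^{ca}f$ on sets of finite $\H^{N-1}$ measure) then follow from (a), (c), (d) together with the standard density-comparison estimates for Radon measures.

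For (a) I would argue by one-dimensional slicing. Fixing a coordinate direction $e_i$, the restriction of $f$ to $\Leb{N-1}$-almost every line parallel to $e_i$ is a $BV$ function of one variable, and the total variation disintegrates as $|D_if|(B) = \int |Df^y|(B_y)\,d\Leb{N-1}(y)$, where $B_y$ is the corresponding slice of $B$. If $\H^{N-1}(B)=0$, the integralgeometric (Eilenberg) inequality $\int^* \H^0(B_y)\,d\Leb{N-1}(y) \le c\,\H^{N-1}(B) = 0$ forces $B_y = \emptyset$ for almost every $y$, whence $|D_if|(B)=0$ for each $i$ and therefore $|Df|(B)=0$. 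This is the desired statement (a), and it is the input that makes the three summands in (b) mutually singular in the strong dimensional sense required.

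The substance lies in (c) and (d), which together form the Federer--Vol'pert theorem, and I would reach them through the coarea formula and De Giorgi's structure theorem. The coarea formula $Df = \int_{\R} D\chi_{E_t}\,dt$, with $E_t = \{f>t\}$, transfers all questions about $Df$ to the perimeter measures of the superlevel sets $E_t$, which are sets of finite perimeter for $\Leb{1}$-a.e. $t$. De Giorgi's theorem asserts that the reduced boundary $\partial^* E_t$ is countably $\H^{N-1}$-rectifiable and that blow-ups of $E_t$ at its points converge to halfspaces, while Federer's theorem identifies $\partial^* E_t$ with the essential boundary up to $\H^{N-1}$-negligible sets. Running these facts over a countable dense family of levels and exploiting the coarea representation yields that $S_f$ is countably $\H^{N-1}$-rectifiable and that $\H^{N-1}(S_f \setminus J_f)=0$. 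A blow-up analysis at $\H^{N-1}$-a.e.\ $x \in J_f$ then produces the one-sided traces $f^\pm(x)$, the unit normal $\nu(x)\in\S^{N-1}$, and the representation $D^{ju}f = (f^+(x)-f^-(x))\nu(x)\,\H^{N-1}\res J_f$; the stated invariance of the triple under $\nu \mapsto -\nu$ is read off directly from the two defining averages over the half-balls $B^{\pm}_{\rho}(x,\nu(x))$.

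The main obstacle is precisely this Federer--Vol'pert step: establishing the $\H^{N-1}$-rectifiability of the discontinuity set together with the exact jump representation. It rests on De Giorgi's rectifiability of the reduced boundary and on the delicate blow-up analysis that controls the behaviour of $f$ on both sides of $J_f$; this is the genuinely hard classical input. By contrast, (a), the coarse decomposition in (b), and the uniqueness assertion in (d) are comparatively soft, following from measure theory, slicing, and inspection of the defining limits. Since all of this is classical, in the write-up I would isolate the external theorems invoked (Lebesgue--Radon--Nikodym, Eilenberg's inequality, coarea, De Giorgi, Federer--Vol'pert) and reduce each clause of the statement to one of them.
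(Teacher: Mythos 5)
Your proposal is correct and is essentially the paper's own treatment: the paper does not prove Theorem~\ref{decompo} but simply refers to \cite[Chapter 3]{AFP}, and your outline (Radon--Nikod\'ym for the coarse splitting in (b), one-dimensional slicing plus the Eilenberg inequality for (a), and coarea, De Giorgi's structure theorem and Federer--Vol'pert with blow-up analysis for (c)--(d)) is precisely the assembly of the results proved there. No gap to report.
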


For a proof of this proposition, we refer to the the book \cite[Chapter 3]{AFP} (see also Definition~\ref{def:prec_res} below).

We will also use the following result.

\begin{thm}\label{LipCompo} Let $F: U \rightarrow V$ be a biLipschitz homeomorphism 
between open subsets of $\R^N$ such that, either $\det dF\geq 0$ $\H^N$-a.e. on $U$ or  
$\det dF\leq 0$ $\H^N$-a.e. on $U$. Then, for all $f\in BV(U)$ the function $f\circ F^{-1}$ belongs to $BV(V)$ and 
$$ |D(f \circ F^{-1})| \leq (\mbox{Lip } F)^{N-1} F_{\sharp} (|Df|),$$
where $\cdot_{\sharp}$ stands for the pushforward of a measure (see Section~\ref{sec:3.3}).
\end{thm}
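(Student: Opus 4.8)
The plan is to reduce the statement to a pointwise linear-algebra estimate for $dF$, prove the inequality first when $f$ is Lipschitz (so that $g:=f\circ F^{-1}$ is Lipschitz and the chain rule together with the area formula apply), and then reach a general $f\in BV(U)$ by mollification and a duality argument. Throughout, $\|A\|$ denotes the operator norm of a matrix $A$. The key point, which explains the exponent $N-1$, is a bound on the cofactor matrix. By Rademacher's theorem $F$ is differentiable $\Leb N$-a.e.\ on $U$, and since $F$ is biLipschitz, at such points $dF(x)$ is invertible with $\|dF(x)\|\le\mathrm{Lip}\,F$. Writing $A=dF(x)$ and letting $\sigma_1\ge\cdots\ge\sigma_N>0$ be its singular values, the cofactor matrix $\mathrm{cof}(A):=\det(A)\,A^{-1}$ has singular values $\bigl\{\prod_{j\ne i}\sigma_j\bigr\}_{i=1}^N$, the largest being $\sigma_1\cdots\sigma_{N-1}$, so that
\[
|\det dF(x)|\,\bigl\|(dF(x))^{-1}\bigr\|=\bigl\|\mathrm{cof}(dF(x))\bigr\|=\sigma_1\cdots\sigma_{N-1}\le(\mathrm{Lip}\,F)^{N-1}.
\]
This is the identity that absorbs the a priori uncontrolled factor $\|(dF)^{-1}\|$ into a clean constant once it is paired with the Jacobian $|\det dF|$ produced by the change of variables.

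Now suppose $f\in\mathrm{Lip}(U)$, so $g=f\circ F^{-1}$ is Lipschitz and $Dg=\nabla g\,\Leb N$. Because $F$ is biLipschitz, $F^{-1}$ is differentiable $\Leb N$-a.e.\ with $dF^{-1}(y)=(dF(x))^{-1}$ at $x=F^{-1}(y)$, and biLipschitz maps carry negligible sets to negligible sets, so all these a.e.\ statements transfer between $U$ and $V$. The chain rule then gives $\nabla g(y)=\bigl((dF(x))^{-1}\bigr)^{\top}\nabla f(x)$ for a.e.\ $y$, whence, for every Borel $B\subset V$, the area formula yields
\[
|Dg|(B)=\int_B|\nabla g|\,d\Leb N=\int_{F^{-1}(B)}\bigl|((dF)^{-1})^{\top}\nabla f\bigr|\,|\det dF|\,d\Leb N\le\int_{F^{-1}(B)}\|\mathrm{cof}(dF)\|\,|\nabla f|\,d\Leb N.
\]
Applying the cofactor estimate bounds the right-hand side by $(\mathrm{Lip}\,F)^{N-1}\int_{F^{-1}(B)}|\nabla f|\,d\Leb N=(\mathrm{Lip}\,F)^{N-1}|Df|(F^{-1}(B))=(\mathrm{Lip}\,F)^{N-1}F_\sharp(|Df|)(B)$, which is the desired inequality in the Lipschitz case.

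For general $f\in BV(U)$, first note $g\in L^1(V)$ since $\int_V|g|\,d\Leb N=\int_U|f|\,|\det dF|\,d\Leb N\le(\mathrm{Lip}\,F)^N\|f\|_{L^1(U)}$. To obtain the localized inequality I would fix an open set $A\subset V$ and a test field $\phi\in C^1_c(A;\R^N)$ with $\|\phi\|_\infty\le1$ and mollify (with $\rho_\ep$ a standard mollifier): the functions $f_\ep:=f*\rho_\ep$ are smooth near the compact set $F^{-1}(\supp\,\phi)$ for small $\ep$, the compositions $g_\ep:=f_\ep\circ F^{-1}$ are Lipschitz and converge to $g$ in $L^1_{loc}(V)$, and the Lipschitz case applied to $g_\ep$ gives $\int_{\supp\,\phi}|\nabla g_\ep|\,d\Leb N\le(\mathrm{Lip}\,F)^{N-1}|Df_\ep|(F^{-1}(\supp\,\phi))\le(\mathrm{Lip}\,F)^{N-1}|Df|\bigl((F^{-1}(\supp\,\phi))_\ep\bigr)$, where the last step uses the standard pointwise bound $|\nabla f_\ep|\le|Df|*\rho_\ep$. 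Since $\supp\,\phi$ is a compact subset of the open set $A$, its preimage is compactly contained in $F^{-1}(A)$, so the $\ep$-neighbourhood stays inside $F^{-1}(A)$ for small $\ep$; passing to the limit in $\int_A g_\ep\,\mathrm{div}\,\phi\,d\Leb N=-\int\langle\nabla g_\ep,\phi\rangle\,d\Leb N$ therefore yields $\bigl|\int_A g\,\mathrm{div}\,\phi\,d\Leb N\bigr|\le(\mathrm{Lip}\,F)^{N-1}|Df|(F^{-1}(A))$. Taking the supremum over $\phi$ gives $|Dg|(A)\le(\mathrm{Lip}\,F)^{N-1}F_\sharp(|Df|)(A)$ for every open $A$, and since both sides are outer regular Radon measures on $V$ (the pushforward is Radon because $F^{-1}$ is continuous), outer regularity promotes this to the inequality on all Borel sets; in particular $|Dg|(V)\le(\mathrm{Lip}\,F)^{N-1}|Df|(U)<\infty$, so $g\in BV(V)$.

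I expect the main obstacle to be this final limiting step rather than the Lipschitz case: one must arrange the localization so that no closures of $F^{-1}(A)$, and hence no uncontrolled boundary contributions of $|Df|$, survive in the limit. The device above, namely testing against $\phi$ of compact support and exploiting that $F^{-1}(\supp\,\phi)\Subset F^{-1}(A)$, is precisely what converts the crude neighbourhood bound coming from mollification into the sharp measure inequality. The sign hypothesis on $\det dF$ plays no role in this inequality, since the area formula only sees $|\det dF|$; it is recorded because it is needed for the companion results on the precise form of $D(f\circ F^{-1})$.
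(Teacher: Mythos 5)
Your proof is correct, but note that the paper does not actually prove this statement: it is quoted as a known result, with the proof deferred to \cite[Theorem 3.16]{AFP}. What you have written is essentially a self-contained reconstruction of that standard argument, and all the steps hold up: the singular-value identity $|\det dF|\,\|(dF)^{-1}\| = \sigma_1\cdots\sigma_{N-1}\le (\mbox{Lip } F)^{N-1}$ is the correct source of the exponent $N-1$; the Lipschitz case via the a.e.\ chain rule (valid because biLipschitz maps preserve $\H^N$-negligible sets, so differentiability points transfer) and the area formula is sound; and the passage to general $f\in BV(U)$ by mollification, the pointwise bound $|\nabla f_\ep|\le |Df|*\rho_\ep$, the duality definition of variation on open sets, and outer regularity of the two Radon measures to upgrade from open to Borel sets, is exactly the kind of approximation scheme used in \cite{AFP}. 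Your closing observation is also accurate and worth keeping: the sign hypothesis on $\det dF$ is never used for the inequality (the area formula only sees $|\det dF|$); in the paper it is relevant to the chain-rule identities in which orientation matters (see Corollary~\ref{corteclem} and the remark on the sign of the determinant of transition maps, where Lemma~\ref{lem:constancy} is invoked precisely to secure this hypothesis before applying the theorem). Two cosmetic points: what you call $\mathrm{cof}(A)$ is usually $\det(A)\,(A^{-1})^{t}$, which has the same singular values, so nothing changes; and in the mollification step $f_\ep\circ F^{-1}$ is Lipschitz only near $F^{-1}(\supp\,\phi)$, but your localization through a set $W$ with $F^{-1}(\supp\,\phi)\subset W\Subset U_\ep$ is exactly what makes that harmless.
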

For a proof, we refer to \cite[Theorem 3.16]{AFP}. 

\begin{lemma}\label{lem:constancy}
Let $f\in BV(\Omega)$, with $\Omega\subset\R^N$ open and connected. If $D^{ju} f=0$ and, for any ball $B_r(x)\Subset\Omega$
one has $|f|^{-1}\in L^\infty(B_r(x))$, then the sign of $f$ is constant.
\end{lemma}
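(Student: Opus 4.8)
The plan is to deduce from the hypotheses that a change of sign of $f$ can only occur on an $\H^{N-1}$-negligible set, and then to conclude by connectedness. First I would extract the two consequences of the assumptions. The condition $|f|^{-1}\in L^\infty(B_r(x))$ means that for every ball $B\Subset\Omega$ there is a constant $c=c_B>0$ with $|f|\geq c$ $\Leb N$-a.e.\ on $B$; in particular $f\neq 0$ a.e., so up to a Lebesgue-null set $\Omega$ is the disjoint union of $\{f>0\}$ and $\{f<0\}$. Next, from $D^{ju}f=0$ together with the formula $D^{ju}f=(f^+-f^-)\nu\,\H^{N-1}\res J_f$ in Theorem~\ref{decompo}(d) and the fact that $f^+\neq f^-$ on $J_f$, I get $\H^{N-1}(J_f)=0$; by Theorem~\ref{decompo}(c) the approximate discontinuity set satisfies $\H^{N-1}(S_f)=0$ as well.

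The heart of the argument is to prove that $f$ has constant sign on each ball $B\Subset\Omega$. Set $E=\{f>0\}$. By the coarea formula for $BV$ functions, $\{f>t\}$ has finite perimeter in $B$ for a.e.\ $t\in\R$; since $|f|\geq c$ a.e.\ on $B$, for every $t\in(0,c)$ the set $\{f>t\}$ coincides with $E$ up to a null set, so $E$ has finite perimeter in $B$. The key point is that no point $x$ of the reduced boundary $\partial^* E\cap B$ can be an approximate continuity point of $f$: if $x$ had an approximate value $\tilde{f}(x)$, then using $f\geq c$ on $E$, $f\leq -c$ on $E^c$, and the fact that both $E$ and $E^c$ have density $\tfrac12$ at $x$, one checks that $\fint_{B_\rho(x)}|f-\tilde{f}(x)|\,d\Leb N$ stays bounded away from $0$ as $\rho\downarrow 0$, a contradiction. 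Hence $\partial^* E\cap B\subset S_f$, and therefore $P(E,B)=\H^{N-1}(\partial^* E\cap B)\leq \H^{N-1}(S_f)=0$. Thus $\chi_E$ has vanishing distributional gradient on the connected set $B$, so it is a.e.\ constant there, i.e.\ $f$ has constant sign on $B$. (Equivalently, one could compose $f$ with a $C^1$ map equal to the sign function outside $(-c,c)$ and with vanishing derivative there, and invoke the chain rule to see that this composition has zero gradient on $B$.)

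Finally I would globalize: the set of points of $\Omega$ having a neighbourhood on which $f>0$ a.e., and the analogous set for $f<0$, are open, disjoint, and by the previous step they cover $\Omega$; connectedness of $\Omega$ forces one of them to be empty, which is the assertion. The step I expect to be the main obstacle is the inclusion $\partial^* E\cap B\subset S_f$, where the two hypotheses genuinely interact: the lower bound $|f|\geq c$ is what turns a measure-theoretic boundary of $\{f>0\}$ into a discontinuity of $f$, while $D^{ju}f=0$ is what makes this discontinuity set negligible. The accompanying density computation at reduced-boundary points is routine but must be carried out with care.
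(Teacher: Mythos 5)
Your proof is correct and follows essentially the same strategy as the paper's: the coarea formula to produce a positivity set of finite perimeter in each ball, the lower bound $|f|\geq c$ to force measure-theoretic boundary points of that set into $S_f$, the hypothesis $D^{ju}f=0$ to make $S_f$ $\H^{N-1}$-negligible, and connectedness to globalize. The only differences are technical variants within the same argument: you work with the reduced boundary and conclude via $P(E,B)=\H^{N-1}(\partial^*E\cap B)=0$ and the constancy of functions with vanishing distributional gradient, whereas the paper works with the essential boundary of the two complementary level sets and invokes the relative isoperimetric inequality.
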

\begin{proof} By the connectivity of $\Omega$, it suffices to show that if $B$ is a ball, $f\in BV(B)$, $|f|^{-1}\in L^\infty(B)$
and $D^{ju} f=0$ in $B$ imply that $f$ has constant sign in $B$. Notice that $D^{ju} f=0$ in $B$ implies $\H^{N-1}(B\cap S_f)=0$.
If $f$ has not a constant sign, then both $\{f>0\}$ and $\{f<0\}$ have positive
$\H^N$-measure. By the coarea formula we can find $s,\,t>0$ with $|f|\geq\max\{s,t\}$ in $B$ 
such that $\{f<-s\}$ and $\{f>t\}$ have finite perimeter. By our choice of $s$ and $t$ and by the hypothesis on $|f|^{-1}$, 
$B$ is partitioned up to $\H^N$-negligible sets in two sets of finite perimeter which therefore have in $B$ the same essential boundary (namely the set of
points of density neither 0 nor 1). On the other hand, since \eqref{eq:ihp1} implies that all sets
$$
\{y\in B_\rho(x):\ |f(y)-\tilde f(x)|>\epsilon\}\qquad\epsilon>0
$$
have $0$ density at $x$ whenever $x\notin S_f$, it follows that
any point in the intersection of the essential
boundaries of $\{f<-s\}$ and $\{f>t\}$ belongs to $S_f$. It follows that the essential boundary of these
sets is $\H^{N-1}$-negligible in $B$, and this forces one of the two sets (by the relative isoperimetric inequality for sets
of finite perimeter)
to be $\H^N$-negligible and the other one to coincide, up to $\H^N$-negligible sets, with $B$.
\end{proof}

\begin{defi}[Precise representative of a $BV$ function]\label{def:prec_res}
Let $\Omega\subset\R^N$ open and $f\in BV(\Omega)$. Recall that, by definition, 
at all points $x\in\Omega\setminus S_f$ there exists a number, called 
approximate limit and denoted $\tilde f(x)$, satisfying
$\int_{B_r(x)}|f-\tilde f(x)|\,dy=o(r^N)$.
In addition, Lebesgue's theorem gives that $\tilde f=f$ $\Leb{N}$-a.e. in $\Omega$.\\
Analogously, at all points $x\in J_f$ one can identify two values $f^\pm(x)$, the so-called one-sided approximate limits, where an analogous
property holds by averaging along an halfspace passing through $x$ (see Theorem~\ref{decompo} (d)). The precise representative $f^*$ is defined on
$\Omega\setminus (S_f\setminus J_f)$ by
$$
f^*(x)=
\begin{cases} \tilde f(x) &\text{if $x\in\Omega\setminus S_f$};\cr
\frac 12\bigl(f^+(x)+f^-(x)\bigr) &\text{if $x\in J_f$.}
\end{cases}
$$
and it is left undefined on the $\H^{N-1}$-negligible set $S_f\setminus J_f$ (in particular, $Df =D f^*$). 
In the rest of this paper a $BV$ function $f$ will be always identified with its precise representative.
\end{defi}

We refer to \cite[Corollary 3.80]{AFP} for more on the precise representative. 
Notice that the use of the representative is necessary in several results in this paper. 
For instance, the precise representative $ f^*$ will be used when the chain rule formula \cite[Theorem 3.96]{AFP} is applied in our setting,
see also Lemma~\ref{lemmaproduct}.
 
 \subsection{$\Cw$ functions, geometric $BV$ functions and Radon measures}\label{BVDC}

 \begin{defi}[$\Cw$ and $\Cwo$ functions]\label{def:cw}
Let $\Omega \subset \R^N$ be an open set and $f:\Omega\to\R$. We say that $f\in\Cw (\Omega)$ if
 there exists a set $S$, $\sigma$-finite with respect to
$\H^{N-1}$, such that:
\begin{itemize}
\item[(a)] $f\vert_{\Omega\setminus S}$ is continuous;
\item[(b)] $f\vert_{\Omega\setminus S}$ is locally bounded in $\Omega$, i.e. for all $x\in\Omega$ one has
$\sup_{B_r(x)\setminus S}|f|<\infty$ for some ball $B_r(x)\Subset\Omega$.
\end{itemize}
We say that $f\in\Cwo (\Omega)$ if, in addition, $S$ can be chosen to be $\H^{N-1}$-negligible.

Notice that (a) does not imply (b), unless $S=\emptyset$.
Occasionally we will use the notation $\Cw(\Omega,S)$ or $\Cwo (\Omega,S)$ to emphasize the role of the set $S$ in the defining
properties of the classes $\Cw(\Omega)$ and $\Cwo(\Omega)$.
\end{defi}

It is obvious that if $f\in\Cw(\Omega,S)$, then $S_f\subset S$ and
$f=\tilde{f}=f^*$ on $\Omega\setminus S$. In particular, for $f\in BV_{loc}(\Omega)\cap\Cwo(\Omega)$, 
the precise representative coincides with $f$ $\H^{N-1}$-a.e. on $\Omega$. Most of the results stated
below in the class $\Cw(\Omega)\cap BV(\Omega)$ (for instance Lemma~\ref{lemmaproduct} below) could actually be stated and proved in the class
$BV(\Omega)$ using the precise representative; however the use of the spaces $\Cw(\Omega)$ and
$\Cwo(\Omega)$ is compatible with our geometric applications and it simplifies some technical aspects.

Notice also that the Euclidean gradient $\nabla f$ of a $DC$ function $f : \Omega \subset \R^N \rightarrow \R$ (arbitrarily defined on the set of points where $f$ is not differentiable) belongs to $\Cw(\Omega)$. The following definition, then, is natural.

\begin{defi}[The class $DC_0(\Omega)$]
Let $\Omega\subset\R^N$ be an open set.
$DC_0(\Omega)$ is the collection of all functions $f\in DC(\Omega)$ such that  $\nabla f \in (\Cwo(\Omega))^N$.
Analogously, $DC_0(\Omega,S)$ is the collection of all functions $f\in DC(\Omega)$ such that  $\nabla f \in (\Cwo(\Omega,S))^N$.
\end{defi}

For later use, let us also introduce 
\begin{equation}\label{geomBV}
\GBV(\Omega)  := BV_{loc}(\Omega) \cap \Cw(\Omega)\qquad \mbox{ and } \qquad\GBV_0(\Omega):= BV_{loc}(\Omega) \cap \Cwo(\Omega).
\end{equation}
Analogous definitions can be given for the subclasses $\GBV(\Omega,S)$, $\GBV_0(\Omega,S)$.

The natural Radon measures related to these spaces are introduced in the following definition.

\begin{defi}[$\GM(\Omega)$ and $\GM_0(\Omega)$ Radon measures]
The set of Radon measures on $\Omega$ that vanish on $\H^{N-1}$-negligible sets is denoted by $\GM(\Omega)$, whereas
the set of Radon measures on $\Omega$ that vanish on $\H^{N-1}$-finite sets is denoted by $\GM_{0}(\Omega)$.
\end{defi}

\begin{remark} In our terminology, a Radon measure $\mu$ on $\Omega$ has finite total variation only on compact
subset of $\Omega$. Consequently, if $\mu$ is a signed Radon measure, $\mu(B)$ makes sense only for Borel sets $B$ 
with compact support in $\Omega$. Note that the derivative of $f \in \GBV_0(\Omega)$ has no jump part, therefore according to 
Theorem~\ref{decompo} it belongs to $\GM_0(\Omega)$ .
\end{remark}


\section{$DC_0$ Riemannian manifolds}\label{DC0}

\subsection{Definition and properties of $DC_0$ Riemannian manifolds}

We start with the definition of the class of manifolds we are interested in.

\begin{defi}[$DC_0$ Riemannian manifolds]\label{def:DC0}
A countable at infinity $N$ dimensional topological manifold $X^*$ is said to be a $DC_0$ (open) Riemannian manifold if the following two properties hold:
\begin{itemize}
\item[(a)] There exist a singular set $\Si \subset X^*$  and a maximal atlas of  $X^*$
$(U_{\alpha},\phi_{\alpha})_{\alpha \in \Lambda}$ made of charts $\phi_{\alpha} : U_{\alpha} \rightarrow \R^N$ which are homeomorphisms 
onto their image such that for any $\alpha,\beta \in \Lambda$,
$$ \phi_{\alpha}(U_{\alpha} \cap \Si) \mbox{ is } \H^{N-1}\mbox{-negligible},$$
and any transition map 
$$F= \phi_{\beta} \circ \phi_{\alpha}^{-1} : \hat{U}_{\alpha}:=\phi_{\alpha}( U_{\alpha} \cap U_{\beta}) \rightarrow \phi_{\beta}(U_{\alpha} \cap U_{\beta})=: \hat{U}_{\beta}$$
 belongs to $DC_0(\phi_{\alpha}(U_{\alpha}\cap U_{\beta}),\phi_{\alpha}(U_{\alpha}\cap \Si))$. 
\item[(b)] $X^*$ is endowed with a Riemannian metric $g$ defined on $X^*\setminus \Si$ whose components $g_{ij}$, when read in a chart $(U,\phi)$, 
belong to $\GBV_0 (\phi(U),\phi(U\cap \Si))$ and satisfy
\begin{equation}\label{condiRmetric}
\frac{1}{c(x)}\|p\|_2^2\geq \sum_{1\leq i,j \leq N}g_{ij}(x)p_ip_j\geq c(x)\|p\|_2^2\quad\text{for all $p\in\R^N$, for all $x\in\phi(U\setminus\Si)$,}
\end{equation}
with $c$ and $c^{-1}$ locally bounded in $\phi(U)$. 
\end{itemize}
\end{defi}

\begin{remark}[Sign of determinant of transition maps] The determinant $\det dF$ of the transition map is a $BV$ function
with no jump part of derivative. 
Therefore, Lemma~\ref{lem:constancy} and condition (b) in Definition~\ref{def:DC0} give that the
sign of $\det dF$ is ($\H^N$-almost everywhere) constant on the connected components of the domain
of $F$. This allows us the application of Theorem~\ref{LipCompo}, namely the invariance of the $BV$
property under composition with $F^{-1}$.
\end{remark}

\begin{remark} The Riemannian metric in (b) is in particular a covariant 2-tensor. 
This means that the components of $g$ when read in two different charts satisfy a compatibility condition. 
This condition is recalled in Section~\ref{tensors}.
\end{remark}

\begin{remark}
Note that a $DC_0$ Riemannian manifold is in particular a Lipschitz manifold. Therefore, there is a well-defined first order calculus for locally Lipschitz functions. Namely, a function $f$ is said to be locally Lipschitz if $f \circ \phi^{-1}$ is a locally Lipschitz function with respect to the Euclidean norm when read in an arbitrary chart $(U,\phi)$. Throughout this paper, ``locally Lipschitz'' will always refer to this definition. 
\end{remark}

We can extend all the definitions of Section~\ref{BVDC} to functions or Radon measures on a $DC_0$ Riemannian manifold, by requiring that the corresponding definition 
given there is satisfied when the object is read in any chart $(U,\phi)$. Namely, a function $f$ belongs to the space ``$X$'' if and only if $f\circ \phi^{-1}$ belongs to the space ``$X$'' in the Euclidean sense, and a Radon measure $\mu$ belongs to the space ``$X$" if $\phi_{\sharp} \mu$ belongs to the same space in the 
Euclidean sense. The well-posedness of these definitions is guaranteed by the fact that a $DC_0$ transformation map is in particular a biLipschitz homeomorphism,
and the fact that sets which are $\sigma$-finite or negligible w.r.t. $\H^{N-1}$ are invariant under biLipschitz maps.


\subsection{Properties of the metric}
We end this part with technical results that will be used in the sequel.

\begin{lemma}\label{g-1}
Let $X^*$ be a $DC_0$ Riemannian manifold. We set $G^{-1}(x)= (g^{ij}(x))_{1\leq i,j\leq N}$ the inverse at $x \in \phi(U \setminus \Si)$ of the matrix 
$(g_{ij}(x))_{1\leq i,j\leq N}$ read in a chart $(U,\phi)$. Then, $G^{-1}$ belongs to $\GBV_0(\phi(U))^{N^2}$. 
\end{lemma}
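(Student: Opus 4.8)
The plan is to realize the whole inverse matrix $G^{-1}=(g^{ij})$ as a $C^1$ (indeed smooth) function of the entries $(g_{kl})$, so that membership in $\GBV_0$ follows from the chain rule for $BV$ functions combined with the stability of the class $\Cwo$. The heart of the matter is that the two-sided bound \eqref{condiRmetric}, together with the local boundedness of $c$ and $c^{-1}$, confines the matrix $(g_{ij}(x))$ to a region of symmetric positive definite matrices on which inversion is a harmless smooth operation.

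First I would record that \eqref{condiRmetric} forces every eigenvalue of $(g_{ij}(x))$ to lie in $[c(x),1/c(x)]$; hence on any $V'\Subset\phi(U)$ the matrices $(g_{ij}(x))$ take values in a fixed compact subset $K$ of the set of symmetric positive definite matrices. In particular $\det(g_{kl})$ is bounded and bounded away from $0$ on $V'$, and the matrix-inversion map $\Phi\colon M\mapsto M^{-1}$ is smooth, a fortiori $C^1$ with bounded derivatives, on a neighbourhood of $K$. Viewing $x\mapsto(g_{ij}(x))$ as a vector-valued map whose components lie in $\GBV_0(\phi(U),\phi(U\cap\Si))$, I would then apply the chain rule for $BV$ functions composed with a locally Lipschitz map \cite[Theorem 3.96]{AFP}: since locally the values stay in $K$, one may replace $\Phi$ by a globally Lipschitz extension without altering $\Phi((g_{ij}))=G^{-1}$ on $V'$. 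This yields $g^{ij}\in BV_{loc}(\phi(U))$. Equivalently, one may argue componentwise through Cramer's rule $g^{ij}=\mathrm{cof}_{ij}/\det(g_{kl})$, using that products of bounded $BV_{loc}$ functions are $BV_{loc}$ and that $t\mapsto 1/t$ is Lipschitz on the range of $\det(g_{kl})$.

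For the $\Cwo$ part, on $\phi(U)\setminus\phi(U\cap\Si)$ all the $g_{ij}$ are continuous and locally bounded, and there $\det(g_{kl})$ is continuous and bounded away from $0$; hence $G^{-1}=\Phi((g_{ij}))$ is continuous and locally bounded on $\phi(U)\setminus\phi(U\cap\Si)$, so each $g^{ij}\in\Cwo(\phi(U),\phi(U\cap\Si))$. Combining this with the previous step gives $g^{ij}\in BV_{loc}\cap\Cwo=\GBV_0$, which is the assertion.

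The only delicate point — and precisely the one flagged in the introduction — is the interaction of the chain and product rules with possible jump parts of the derivative. Here that difficulty evaporates: because the $g_{ij}$ lie in $\GBV_0$, their derivatives carry no jump part, and the mere membership of $G^{-1}$ in $\Cwo$ automatically forces its derivative to be jump-free as well, its approximate discontinuity set being contained in the $\H^{N-1}$-negligible set $\phi(U\cap\Si)$. Thus the standard chain rule applies in its classical form and no anomalous jump contribution can arise, so that the substantive work reduces to the elementary local-compactness verification carried out in the second step.
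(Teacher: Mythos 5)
Your proposal is correct and in substance the same as the paper's proof: both hinge on the ellipticity condition (b) of Definition~\ref{def:DC0} to confine the (essential) values of $(g_{ij})$ to a compact set of positive definite matrices where inversion is Lipschitz, on the chain rule \cite[Theorem 3.96]{AFP} together with the absence of jump parts for $\GBV_0$ functions to get the $BV_{loc}$ property, and on reading off the $\Cwo$ property directly on the complement of the $\H^{N-1}$-negligible singular set. The only difference is packaging --- the paper factors $G^{-1}$ through Cramer's rule, handling the cofactors by the algebra property of $\GBV_0$ and applying the scalar chain rule to $1/\det G$, while your primary route composes the vector of entries with a single Lipschitz truncation of the matrix-inversion map; indeed, your ``equivalently'' aside is precisely the paper's argument.
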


\begin{proof}
Let $\Omega=\phi(U)$ and let $S$ be the $\H^{N-1}$-negligible singular set of $g$, in these local coordinates. 
Using the expression of the inverse of a matrix in terms of the matrix of cofactors and the fact that $\GBV_0$ is an algebra, it suffices to prove that $1/\bar{g}$, with $\bar{g} = \det (g_{ij})$, belongs to $BV_{loc}(\Omega)$ and is locally bounded in $\Omega$, according to
Definition~\ref{def:cw}. Local boundedness of $1/\bar{g}$ follows immediately by condition (b) in Definition~\ref{def:DC0}.
 The fact that 
$1/\bar{g}$ belongs to $BV_{loc}(\Omega)$ follows from the local boundedness and from the chain rule formula for the left composition with a 
Lipschitz function (see for instance \cite[Theorem 3.96]{AFP}), which gives
$$ D \frac{1}{\bar{g}} = -\frac{1}{\bar{g}^2} D\bar{g}$$ 
provided we work with the precise representative (recall also that $\bar{g}$ has no jump part in its derivative since it belongs to $\GBV_0(\Omega)$). 
\end{proof}

For later use, we also mention the following result whose proof is along the same lines as the one above, thus is left to the reader.

\begin{lemma}\label{lemmadiffdet} Let $X^* $ be a $DC_0$  Riemannian manifold and let $G$ be the  
$\GBV_0$-metric read in a chart. 
Then, understanding the derivatives in the sense of distributions, one has
\begin{equation}\label{diffdet}
\frac{\partial}{ \partial x_i} \left( \sqrt{{\rm det\,}G}\right)= \frac{\sqrt{{\rm det\,}G}}{2} \sum_{k,s} g^{ks}\frac{\partial g_{ks}}{\partial x_i}. 
\end{equation}
\end{lemma}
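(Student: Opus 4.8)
The plan is to mimic the classical smooth computation based on Jacobi's formula for the differential of the determinant, replacing each pointwise differentiation by the $BV$ chain rule for left composition with a $C^1$ function \cite[Theorem 3.96]{AFP}, exactly as in the proof of Lemma~\ref{g-1}. The structural facts that make this work are that each component $g_{ks}$ lies in $\GBV_0(\Omega)$, so its derivative has no jump part by Theorem~\ref{decompo}; that $\GBV_0(\Omega)$ is an algebra; and that $\bar g=\det(g_{ks})$ is locally bounded and locally bounded away from $0$ thanks to the ellipticity bound (\ref{condiRmetric}).

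First I would record that $\bar g\in\GBV_0(\Omega)$: being a polynomial in the entries $g_{ks}$, it belongs to the algebra $\GBV_0(\Omega)$, and (\ref{condiRmetric}) controls the eigenvalues of $G$ between $c(x)$ and $c(x)^{-1}$, whence $c(x)^N\le\bar g(x)\le c(x)^{-N}$ with $c,c^{-1}$ locally bounded, so $\bar g$ and $1/\bar g$ are locally bounded. The map $x\mapsto(g_{ks}(x))\in\R^{N^2}$ thus takes values, locally, in a compact subset of $\R^{N^2}$ on which the polynomial function $\det$ may be regarded as $C^1$ and Lipschitz after a harmless truncation outside a large ball (this is the same localization used in Lemma~\ref{g-1} for $t\mapsto 1/t$). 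Applying \cite[Theorem 3.96]{AFP} and using that none of the $D_ig_{ks}$ has a jump part, so that the jump term of the chain rule vanishes and every coefficient is evaluated at the precise representative, I obtain the identity of Radon measures
$$
D_i\bar g=\sum_{k,s}\frac{\partial\det}{\partial g_{ks}}(g)\,D_ig_{ks}.
$$
Here each coefficient $\tfrac{\partial\det}{\partial g_{ks}}(g)$ is a locally bounded $\GBV_0$ function, hence integrable against $D_ig_{ks}$: indeed $|Dg_{ks}|$ vanishes on $\H^{N-1}$-negligible sets by Theorem~\ref{decompo}(a), so the precise representatives are defined $|Dg_{ks}|$-a.e. and the products are well-defined measures.

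Next I would evaluate the coefficient via Jacobi's formula, $\tfrac{\partial\det}{\partial g_{ks}}(G)=\bar g\,(G^{-1})_{sk}=\bar g\,g^{sk}=\bar g\,g^{ks}$, the last equality by symmetry of $G$; note $g^{ks}$ is itself a locally bounded $\GBV_0$ function by Lemma~\ref{g-1}. This gives $D_i\bar g=\bar g\sum_{k,s}g^{ks}\,D_ig_{ks}$. Finally, applying the chain rule once more with $t\mapsto\sqrt t$, which is $C^1$ on the interval in $(0,\infty)$ where $\bar g$ ranges locally, yields
$$
D_i\sqrt{\bar g}=\frac{1}{2\sqrt{\bar g}}\,D_i\bar g
=\frac{1}{2\sqrt{\bar g}}\,\bar g\sum_{k,s}g^{ks}\,D_ig_{ks}
=\frac{\sqrt{\bar g}}{2}\sum_{k,s}g^{ks}\,D_ig_{ks},
$$
which is precisely (\ref{diffdet}).

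The main point to be careful about, already the crux of Lemma~\ref{g-1}, is the legitimacy of the two chain-rule applications in this low-regularity setting rather than any algebra, which is identical to the smooth case. This rests on three facts: the absence of jump parts in the $D_ig_{ks}$, so that the general $BV$ chain rule collapses to the familiar Leibniz-type identity with coefficients evaluated at the precise representative; the local boundedness of all the coefficients involved ($\tfrac{\partial\det}{\partial g_{ks}}(g)$, $\bar g$, $1/\bar g$, and $g^{ks}$), which guarantees that the pointwise products with the measures $D_ig_{ks}$ are genuine Radon measures; and the fact that $\det$ and $\sqrt{\,\cdot\,}$ need only be $C^1$ on the locally compact range of the data, so their non-Lipschitz global behaviour is irrelevant after localization. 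Once these are in place the computation is purely formal.
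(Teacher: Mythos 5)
Your proof is correct and follows exactly the route the paper intends: the authors leave this lemma to the reader as being ``along the same lines'' as Lemma~\ref{g-1}, i.e.\ the $BV$ chain rule \cite[Theorem 3.96]{AFP} applied with precise representatives, exploiting that $\GBV_0$ is an algebra, that the $D_ig_{ks}$ have no jump part, and that \eqref{condiRmetric} keeps $\bar g$ locally bounded away from $0$ and $\infty$. Your two chain-rule applications (to the truncated $\det$ via Jacobi's formula, then to $\sqrt{\,\cdot\,}$) together with the justification that the coefficient functions are defined $|Dg_{ks}|$-a.e.\ and locally bounded is precisely that argument, carried out in full.
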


To conclude this part, we notice that as in the smooth case, a Riemannian metric can be defined locally first and then be extended to a global tensor on the $DC_0$ manifold. Indeed, it is easy to check that (\ref{condiRmetric}) is preserved by $DC_0$ transformation map and the lemma below guarantees the existence of $DC_0$ partition of unity.

\begin{lemma}\label{DCpartition} Given a locally finite atlas $(U_i,\phi_i)_{i \in I}$ on $X^*$, there exists a partition of unity $(\psi_i)_{i \in I}$ 
subordinate to $(U_i)_{i \in I}$ where $\psi_i$ are compactly supported Lipschitz $DC_0$ functions in $U_i$. 
\end{lemma}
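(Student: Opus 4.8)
The plan is to imitate the classical manifold construction of a subordinate partition of unity, replacing smooth bump functions by their pull\-backs to $X^*$ and checking at each step that the $DC_0$ regularity survives the algebraic operations involved. Since $X^*$ is countable at infinity, locally compact and Hausdorff, it is paracompact; I would therefore first extract from $(U_i)_{i\in I}$ a locally finite refinement $(O_\alpha)_\alpha$ by coordinate domains with $\overline{O_\alpha}$ compact and contained in some $U_{i(\alpha)}$, together with a shrinking $(O'_\alpha)_\alpha$, $\overline{O'_\alpha}\subset O_\alpha$, still covering $X^*$ (the family $(\overline{O_\alpha})_\alpha$ remains locally finite). For each $\alpha$ I would fix a $C^\infty$ function $\chi_\alpha$ on $\R^N$ with compact support in $\phi_{i(\alpha)}(O_\alpha)$, equal to $1$ on $\phi_{i(\alpha)}(\overline{O'_\alpha})$ and with $0\le\chi_\alpha\le 1$, and define $g_\alpha$ on $X^*$ by $g_\alpha=\chi_\alpha\circ\phi_{i(\alpha)}$ on $U_{i(\alpha)}$ and $g_\alpha=0$ elsewhere.

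The first regularity point to settle is that each $g_\alpha$ is a Lipschitz $DC_0$ function on $X^*$. A compactly supported $C^2$ function on $\R^N$ is semiconcave and semiconvex, hence $DC$, and its gradient is continuous, hence lies in $(\Cwo)^N$ with empty singular set; so $\chi_\alpha\in DC_0(\R^N)$. To read $g_\alpha$ in an arbitrary chart $(U_\beta,\phi_\beta)$ one obtains $\chi_\alpha\circ F$ with $F=\phi_{i(\alpha)}\circ\phi_\beta^{-1}$ a $DC_0$ transition map (Definition~\ref{def:DC0}). The Stability theorem gives that $\chi_\alpha\circ F$ is $DC$, and the chain rule $\partial_j(\chi_\alpha\circ F)=\sum_k(\partial_k\chi_\alpha)\circ F\,\partial_j F^{(k)}$ — valid since $F$ is Lipschitz and $\chi_\alpha$ is $C^1$ — exhibits $\nabla(\chi_\alpha\circ F)$ as a finite sum of products of a continuous function with the $\Cwo$ components of $\nabla F^{(k)}$; since $\Cwo$ is an algebra this gradient lies in $(\Cwo)^N$, so $\chi_\alpha\circ F\in DC_0$. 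As $\chi_\alpha$ has compact support inside $\phi_{i(\alpha)}(U_{i(\alpha)})$, the extension by zero is compatible across charts, so $g_\alpha\in DC_0(X^*)$; being $DC$ it is locally Lipschitz, and compact support makes it Lipschitz.

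It remains to assemble and normalise. The sum $g:=\sum_\alpha g_\alpha$ is locally a finite sum (by local finiteness of $(\overline{O_\alpha})_\alpha$), hence $DC_0$; since the $O'_\alpha$ cover $X^*$ and $g_\alpha\equiv 1$ on $\overline{O'_\alpha}$, one has $g\ge 1$ everywhere. Exactly as in the proof of Lemma~\ref{g-1}, the reciprocal $1/g$ is $DC_0$: it is $DC$ by the Stability theorem (composition of $g$ with the smooth map $t\mapsto 1/t$ on an open set containing the range of $g$), and the chain rule gives $D(1/g)=-g^{-2}\,Dg$ with $g^{-2}$ continuous and $Dg$ having $\Cwo$ density, so $\nabla(1/g)\in(\Cwo)^N$. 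Since $DC_0$ is an algebra (again by the Stability theorem together with the product rule and the algebra structure of $\Cwo$), each $\lambda_\alpha:=g_\alpha/g$ is a compactly supported Lipschitz $DC_0$ function with $\supp\lambda_\alpha\subset U_{i(\alpha)}$, and $\sum_\alpha\lambda_\alpha\equiv 1$. Grouping by index, $\psi_i:=\sum_{\alpha:\,i(\alpha)=i}\lambda_\alpha$ is again a locally finite sum of $DC_0$ functions, hence a Lipschitz $DC_0$ function with $\supp\psi_i\subset U_i$ and $\sum_i\psi_i\equiv 1$; the $\lambda_\alpha$ themselves (or the $\psi_i$, when each index $i$ receives finitely many $\alpha$, e.g. when the $U_i$ are relatively compact) provide the required compactly supported members.

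The computations are routine once the right stability statements are in place, so I expect the only genuinely delicate point to be the \emph{closure of $DC_0$ under the nonlinear operations} — composition of a smooth function with a $DC_0$ chart change, and the passage to the reciprocal $1/g$ — where one must combine the Stability theorem with the chain and product rules for $BV$ (applied, as always here, to the precise representative) and the fact that $\Cwo$ is an algebra. The secondary thing to watch is the bookkeeping ensuring that the chart\-by\-chart readings glue to a single $DC_0$ object on $X^*$ and that supports stay inside the prescribed $U_i$.
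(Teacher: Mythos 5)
Your proposal is correct and follows essentially the same route as the paper: paracompactness plus a shrinking of the cover, pull-back of Euclidean smooth bump functions through the charts (using the Stability theorem, the chain rule, and the algebra property of $\Cwo$ to keep the $DC_0$ regularity), and normalisation by the locally finite sum, whose reciprocal is handled exactly as in Lemma~\ref{g-1}. The only real difference is bookkeeping: the paper shrinks within the \emph{same} index set, taking $U_i'\Subset U_i$ so that each normalised bump $\psi_i$ is automatically compactly supported in $U_i$, whereas your refine-and-regroup step can destroy compact support of $\psi_i$ when infinitely many refined pieces land in one $U_i$ --- a point you flag honestly, and which in fact traces back to an imprecision of the statement itself (for a non-relatively-compact chart, e.g.\ a single-chart atlas of a non-compact $X^*$, no compactly supported subordinate family indexed by $I$ can sum to $1$, so even the paper's shrinking step is not always available).
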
 
 \begin{proof}
$X^*$ is paracompact, thus there exists a locally  finite subcover $(U_i')_{i \in I}$ of $(U_i)_{i \in I}$ with 
$U_i' \Subset U_i$. Since in Euclidean spaces one can always find, given open sets $A\Subset B$, a function
$\phi\in C^\infty_c(B)$ with $0\leq\phi\leq 1$ and $\phi\equiv 1$ in a neighbourhood of $A$, we can pull these maps (with
$A=A_i=\phi_i(U_i')$, $B=B_i=\phi_i(U_i)$) to obtain $DC_0$ functions $\psi_i$ and build out of them the desired partition of
unity. 
\end{proof}


\subsection{Pull-back of function and measure on a $DC_0$ Riemannian manifold}\label{sec:3.3}

Recall that, for a proper map $F:X\longrightarrow Y$, the push-forward operator $F_\sharp$ maps 
Radon measures in $X$ to Radon measures in $Y$ via the formula $F_\sharp \mu(B)=\mu(F^{-1}(B))$
for any Borel set $B$ with compact support in $Y$. Equivalently, since Borel functions can be approximated
with simple Borel functions, one can characterize $F_\sharp\mu$
via the change of variables formula
$$
\int_Y \varphi \,dF_\sharp\mu=\int_X\varphi\circ F\,d\mu.
$$
In particular, we obtain the useful formula 
\begin{equation}\label{eq:ihp4}
F_\sharp ((k\circ F)\mu)=kF_\sharp\mu\qquad\text{for any locally bounded Borel function $k:Y\longrightarrow\R$,}
\end{equation} 
since (by applying the chance of variables formula with $\varphi=k\chi_B$)
$$
F_\sharp ((k\circ F)\mu)(B)=\int_{F^{-1}(B)} k\circ F\,d\mu=\int_B k\,dF_\sharp\mu
$$
for any Borel set $B$ with compact support.

For geometrical purposes, we need also to introduce the pull-back of measures, in a form that takes into account
the Jacobian determinant of $F$, see \eqref{eq:ihp3} below.

\begin{defi}[Pull-back of functions and measures]\label{pbmf} Let $X^* $ be a $DC_0$ manifold. Let $F: \hat{U} \longrightarrow \hat{V}$ be a $DC_0$ transition map. 
Given $f \in \Cw (\hat{V})$ we define the pull-back of $f$ through $F$ by 
$$ F^*(f)= f \circ F.$$ 
Analogously, given $\mu \in \GM(\hat{V})$, we define the pull-back of $\mu$ through $F$ by
$$ \langle F^*(\mu),\psi\rangle = \int_{\hat{V}} \psi \circ F^{-1} |\det dF^{-1}| \,\mu(dx),$$
where $\psi$ is any compactly supported and bounded Borel function on $\hat{U}$. Equivalently,
 taking the change of variables formula for $F_\sharp$ into account, one can write
\begin{equation} \label{eq:ihp3}
F^*(\mu)= (F^{-1})_{\sharp} (|\det dF^{-1}| \,\mu).
\end{equation}
\end{defi}

 Combining \eqref{eq:ihp4} and \eqref{eq:ihp3} we immediately get a formula ``dual'' to \eqref{eq:ihp4}, namely 
\begin{equation}\label{eq:stimamax3}
F^*(k\mu)= (k\circ F) F^*(\mu)\qquad\text{for any locally bounded Borel function $k$.}
\end{equation}

If $\mu=\rho\H^N$ and $\psi$ is any compactly supported and bounded Borel function, the
change of variable formula of $\H^N$ for biLipschitz homeomorphisms gives 
 \begin{eqnarray*}\label{blurb}
 \int \psi(x) \,dF^*(\mu)(x) & =  &\int \psi (F^{-1}(x)) \,|\det dF^{-1}|\,\rho(x) \, dx  \\
& = &\int \psi(x)\,\rho(F(x)) \, dx.										
\end{eqnarray*}
Thus,  
\begin{equation}\label{eq:consistency}
F^*(\rho\H^N)= (F^*\rho)\H^N
\end{equation}
and the two definitions given above, for functions and measures, are mutually consistent.

\begin{remark}\label{rem1} Note that $x \mapsto |\det d_xF^{-1}|$ belongs to $\GBV_0(\hat{V})$ thus, in particular, it is defined 
$\mu$-almost everywhere whenever $\mu \in \GM(\hat{V})$.
\end{remark}

\subsubsection{Approximation result and geometrical consequences}
In this part we prove a proposition whose corollary will be used many times in the rest of the paper.

\begin{prop}\label{teclemma}
Let $\Omega$ be an open subset of $\R^N$ and $h:\Omega\to\R$ bounded. 
Assume that either $h\in \Cwo (\Omega)$ and $\mu \in \GM (\Omega)$, or $h \in \Cw (\Omega)$ 
and $\mu\in \GM_0 (\Omega)$. Let  $(\mu_{\ep})_{\ep >0}$ be Radon measures with finite total
variation in $\Omega$ absolutely continuous with respect 
to $\H^N$ such that $\mu_{\ep}  \rightarrow \mu $ in the duality with $C_c(\Omega)$.  We further assume that 

\begin{equation} \label{eq:stimamax}
 \limsup_{\ep \downarrow 0} |\mu_{\ep}| (\Omega) \leq |\mu|(\Omega)<\infty.
 \end{equation}
Then $h\mu_{\ep}  \rightarrow h \mu$ in the duality with $C_b(\Omega)$, the class of bounded continuous functions in
$\Omega$.
\end{prop}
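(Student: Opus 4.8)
The plan is to reduce the statement to a single approximation lemma and then run a semicontinuity sandwich. First I would absorb the test function: for $\psi\in C_b(\Omega)$ the product $\psi h$ is again bounded and its restriction to $\Omega\setminus S$ is continuous, so it suffices to prove that
\[
\int_\Omega u\,d\mu_\ep\longrightarrow\int_\Omega u\,d\mu
\]
for every bounded $u:\Omega\to\R$ whose restriction $u|_{\Omega\setminus S}$ is continuous, and then to apply this with $u=\psi h$; the conclusion in the duality with $C_b(\Omega)$ follows at once. From the two cases of the hypothesis I only need to retain the single consequence $|\mu|(S)=0$: in the first case $S$ is $\H^{N-1}$-negligible and $\mu\in\GM(\Omega)$, in the second $S$ is $\sigma$-finite with respect to $\H^{N-1}$ and $\mu\in\GM_0(\Omega)$. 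Moreover, being $\sigma$-finite with respect to $\H^{N-1}$, the set $S$ is $\Leb N$-negligible, so $\Omega\setminus S$ is dense in $\Omega$ and, since $\mu_\ep\ll\Leb N$, also $\mu_\ep^\pm(S)=0$ and $\mu^\pm(S)=0$.

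Second, I would upgrade the vague convergence to narrow convergence of the Jordan parts. Lower semicontinuity of the total variation under vague convergence gives $|\mu|(\Omega)\le\liminf_\ep|\mu_\ep|(\Omega)$, which together with \eqref{eq:stimamax} yields $|\mu_\ep|(\Omega)\to|\mu|(\Omega)$. Since the masses $\mu_\ep^\pm(\Omega)$ are bounded, I would extract vague subsequential limits $\mu_\ep^+\to\alpha$, $\mu_\ep^-\to\beta$ with $\alpha,\beta\ge 0$ and $\alpha-\beta=\mu$; the chain $|\mu|(\Omega)=|\alpha-\beta|(\Omega)\le(\alpha+\beta)(\Omega)\le\liminf_\ep|\mu_\ep|(\Omega)=|\mu|(\Omega)$ forces $\alpha\perp\beta$, whence $\alpha=\mu^+$ and $\beta=\mu^-$ by uniqueness of the Jordan decomposition. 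As the limits are uniquely identified and the total masses converge, this upgrades to narrow convergence $\mu_\ep^\pm\to\mu^\pm$, i.e. convergence tested against all of $C_b(\Omega)$.

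Third comes the sandwich. To bypass the fact that $u$ may be discontinuous on $\Omega$, I would introduce the envelopes relative to the dense set $\Omega\setminus S$,
\[
u^*(x)=\inf_{r>0}\ \sup_{y\in B_r(x)\setminus S}u(y),\qquad u_*(x)=\sup_{r>0}\ \inf_{y\in B_r(x)\setminus S}u(y),
\]
which are bounded and, respectively, upper and lower semicontinuous on $\Omega$, and which satisfy $u_*=u=u^*$ at every point of $\Omega\setminus S$ by the continuity of $u|_{\Omega\setminus S}$. Since $\mu_\ep^\pm$ and $\mu^\pm$ all vanish on $S$, I may replace $u$ by $u^*$ or by $u_*$ inside every integral against these measures. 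Using the standard semicontinuity inequalities under narrow convergence of positive measures (for bounded upper semicontinuous $v$ one has $\limsup_\ep\int v\,d\nu_\ep\le\int v\,d\nu$, obtained by writing $v$ as a decreasing limit of functions in $C_b$, and the reverse inequality for lower semicontinuous functions), I would get
\[
\limsup_\ep\int u\,d\mu_\ep^+=\limsup_\ep\int u^*\,d\mu_\ep^+\le\int u^*\,d\mu^+=\int u\,d\mu^+
\]
and symmetrically $\liminf_\ep\int u\,d\mu_\ep^+\ge\int u_*\,d\mu^+=\int u\,d\mu^+$, hence $\int u\,d\mu_\ep^+\to\int u\,d\mu^+$. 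The same holds for the negative parts, and subtracting yields $\int u\,d\mu_\ep\to\int u\,d\mu$.

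The main obstacle is exactly the passage handled in the third step. Because membership in $\Cw(\Omega,S)$ only asks that the \emph{restriction} of $u$ to $\Omega\setminus S$ be continuous, the genuine discontinuity set of $u$ as a function on $\Omega$ can be enormous (all of $\Omega$, when $S$ is dense), so a naive portmanteau or Lusin-plus-Tietze argument breaks down: one cannot, in general, approximate $u$ by a globally continuous function controlling the error $\int|u-g|\,d|\mu_\ep|$ uniformly in $\ep$. Replacing $u$ by its semicontinuous envelopes along the dense set $\Omega\setminus S$ — which agree with $u$ precisely where all the measures are concentrated — is what makes the argument robust; the upgrade to narrow convergence in the second step is the other technical ingredient, needed because $u$ is only bounded and the semicontinuity estimates must be tested against bounded (not compactly supported) continuous functions.
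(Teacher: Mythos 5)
Your proof is correct and follows essentially the same route as the paper's: reduce via the Jordan decomposition and the mass bound \eqref{eq:stimamax} to nonnegative measures, then sandwich the integrand between its lower and upper semicontinuous envelopes taken relative to $\Omega\setminus S$, using that neither $\mu$ nor the $\mu_\ep$ charge $S$. The only difference is organizational: where the paper invokes \cite[Theorem 2.2]{AFP} to get $|\mu_\ep|\to|\mu|$ narrowly and hence $\mu_\ep^{\pm}\to\mu^{\pm}$, you prove this upgrade directly by weak* compactness and uniqueness of the Jordan decomposition, and you absorb the test function $\psi$ into the integrand rather than splitting it into positive and negative parts at the end.
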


\begin{proof}
The leading idea of the proof is that the set of discontinuity points of a suitable restriction of 
$h$ is negligible with respect to $\mu$ and $\mu_{\ep}$.
From \cite[Theorem 2.2]{AFP}, we infer that $|\mu_{\ep}|  \rightarrow |\mu| $ in the duality with $C_b(\Omega)$. 
As a consequence, we clearly have that $\mu_{\ep}^{\pm}  \rightarrow \mu^{\pm} $ in the duality with $C_c(\Omega)$. 
Combining this remark with the analogous decomposition of $h$ into its positive and negative part, 
the boundedness of $h$ reduces the proof to the case when $\mu_\ep \geq 0$ and $h$ is nonnegative and bounded. 

Let us consider the case $h \in \Cw (\Omega,S)$ 
and $\mu\in \GBV_0 (\Omega)$, the proof in the other case is similar.
We then define the lower semi-continuous envelope $h_-:\Omega\to\R$
$$ h_-(x) = \inf\bigl\{\liminf h(x_n);\ x_n \in \Omega\setminus S \mbox{ and } \lim_{n \rightarrow  \infty} x_n=x\bigl\}.$$
We define the upper-semicontinuous $h_+$ envelope analogously.  Now, using that $h_{-}\geq 0$ 
is lower-semicontinuous and $\mu \geq 0$, we get (see for instance \cite[Proposition 1.62]{AFP} for a proof) for any nonnegative continuous function $\psi$
 $$ \liminf_{\ep \downarrow 0}  \int \psi h_{-} \,d\mu_{\ep} \geq \int \psi h_{-} \,d\mu.$$
 Analogously, if $\psi$ is also bounded, we have
 $$ \limsup_{\ep \downarrow 0}  \int \psi h_{+} \,d\mu_{\ep} \leq \int \psi h_{+} \,d\mu.$$
 Now, the continuity assumption on  $h$ yields $h_+=h_-$ on $\Omega\setminus S$, so that
 $$  \int \psi h_{+} \,d\mu_{\ep}= \int \psi h_{-} \,d\mu_{\ep}= \int \psi h \,d\mu_{\ep}$$
 and 
 $$  \int \psi h_{+} \,d\mu= \int \psi h_{-} \,d\mu= \int \psi h \,d\mu.$$
 This proves the convergence of $h\mu_\ep$ to $h\mu$ in the duality with bounded and nonnegative continuous functions.
 The general case can be achieved splitting the test function $\psi$ in positive and negative part. 
 \end{proof}

\begin{cor}\label{corteclem}
Let $F: \hat{U} \longrightarrow \hat{V}$ be a $DC_0$ transition map. 
Assuming that $h\in\GBV(\hat{V})$, the following chain rule in the sense of Radon measures holds:
\begin{equation}\label{compoder}
\frac{\partial}{\partial x_i} (h \circ F) = \sum_{s=1}^N \frac{\partial F_s}{\partial x_i} \, F^*\left(\frac{\partial h}{ \partial y_s}\right)
\end{equation}
for any $i \in \{1,\cdots,N\}$ and $F=(F_1, \cdots, F_N)$. 
 \end{cor}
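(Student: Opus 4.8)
The plan is to prove \eqref{compoder} by approximation: reduce to the classical chain rule for smooth functions and then pass to the limit using Proposition~\ref{teclemma}. I first check that both sides of \eqref{compoder} are well-defined Radon measures on $\hat{U}$. Since $h\in BV(\hat{V})$ and $F^{-1}$ is a biLipschitz $DC_0$ homeomorphism whose Jacobian has $\H^N$-a.e. constant sign on connected components (by the remark following Definition~\ref{def:DC0}), Theorem~\ref{LipCompo} applied to $F^{-1}$ gives $h\circ F\in BV_{loc}(\hat{U})$, so the left-hand side makes sense. On the right-hand side, $\partial h/\partial y_s=D_s h$ vanishes on $\H^{N-1}$-negligible sets by Theorem~\ref{decompo}(a), hence $D_s h\in\GM(\hat{V})$ and $F^*(D_s h)\in\GM(\hat{U})$; multiplying by the bounded factor $\partial F_s/\partial x_i$ yields a Radon measure.

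Next I would fold the right-hand side into a single ``function times measure'' on $\hat{V}$. Using the push-forward identity \eqref{eq:ihp4} (applied to the map $F^{-1}$) together with the definition \eqref{eq:ihp3} of $F^*$, one gets, for any $\mu\in\GM(\hat{V})$,
$$ \frac{\partial F_s}{\partial x_i}\,F^*(\mu)=(F^{-1})_\sharp\bigl(K_s\,\mu\bigr),\qquad K_s:=\Bigl(\frac{\partial F_s}{\partial x_i}\circ F^{-1}\Bigr)\,|\det dF^{-1}|. $$
Because $F\in DC_0$, the component $\partial F_s/\partial x_i$ belongs to $\Cwo(\hat{U})$, and $|\det dF^{-1}|\in\GBV_0(\hat{V})\subset\Cwo(\hat{V})$ by Remark~\ref{rem1}; since biLipschitz maps preserve $\H^{N-1}$-negligibility and products of bounded $\Cwo$ functions are again bounded and $\Cwo$, the coefficient $K_s$ is a \emph{bounded} function in $\Cwo(\hat{V})$. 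This is the structural point that makes Proposition~\ref{teclemma} applicable (in its first case, $K_s\in\Cwo$, $D_s h\in\GM$), and it is exactly where the $DC_0$ hypothesis on $F$ is used.

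Then I would mollify: let $h_\ep=h*\rho_\ep$ on relatively compact subsets of $\hat{V}$. These are smooth, so the a.e.\ chain rule for the Lipschitz composition $h_\ep\circ F$ (using Rademacher differentiability of $F$ and the fact that $h_\ep\circ F$ is Lipschitz, hence has no singular derivative), combined with the consistency relation \eqref{eq:consistency}, gives the exact measure identity
$$ \frac{\partial}{\partial x_i}(h_\ep\circ F)=\sum_{s}\frac{\partial F_s}{\partial x_i}\,F^*(D_s h_\ep)=\sum_s (F^{-1})_\sharp\bigl(K_s\,D_s h_\ep\bigr). $$
Passing to the limit, the left-hand side converges to $\partial_i(h\circ F)$ in the duality with $C_c(\hat{U})$, since $h_\ep\circ F\to h\circ F$ in $L^1_{loc}(\hat{U})$ by the change of variables under the biLipschitz $F$. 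For the right-hand side I would apply Proposition~\ref{teclemma} with the bounded $\Cwo$ function $K_s$ and $\mu_\ep=D_s h_\ep\to D_s h\in\GM(\hat{V})$, the hypothesis \eqref{eq:stimamax} being the standard mollifier bound $\limsup_\ep|D_s h_\ep|(V')\le|D_s h|(V')$ on subdomains $V'$ with $|D_s h|(\partial V')=0$. This yields $K_s\,D_s h_\ep\to K_s\,D_s h$ in the duality with $C_b(\hat{V})$; composing with the push-forward $(F^{-1})_\sharp$ (which is continuous for this duality, as $\varphi\circ F^{-1}\in C_b(\hat{V})$ whenever $\varphi\in C_b(\hat{U})$) gives convergence of the right-hand side to $\sum_s (F^{-1})_\sharp(K_s\,D_s h)=\sum_s \partial_i F_s\,F^*(D_s h)$. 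Equating the two limits proves \eqref{compoder}.

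I expect the main obstacle to be this limit passage on the right-hand side: everything must be arranged so that Proposition~\ref{teclemma} applies, which is precisely why the discontinuous factors $\partial F_s/\partial x_i$ and $|\det dF^{-1}|$ have to be absorbed into a single $\Cwo$ coefficient $K_s$, and why the total-variation control \eqref{eq:stimamax} for the mollifications must be verified on subdomains that do not charge their boundary. Since $\hat{V}$ may be unbounded and $h$ is only $BV_{loc}$, a routine localization to relatively compact open sets (chosen to contain the support of the test function, transported by $F$) is needed throughout; this is technical but standard.
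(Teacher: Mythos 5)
Your proof is correct and follows essentially the same route as the paper's: mollify $h$, use the classical chain rule for $h_\ep\circ F$, rewrite the right-hand side so that the discontinuous factors $\frac{\partial F_s}{\partial x_i}$ and $|\det dF^{-1}|$ are absorbed into a single bounded $\Cwo(\hat{V})$ coefficient, and pass to the limit via Proposition~\ref{teclemma} together with the push-forward identities \eqref{eq:ihp3}--\eqref{eq:stimamax3}. The only difference is presentational (you work with $(F^{-1})_\sharp$ and the coefficient $K_s$ rather than with $F^*$ directly), and your explicit localization to relatively compact subdomains not charging $|D_s h|$ on their boundary is a welcome refinement of a step the paper leaves implicit.
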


 \begin{proof} 
We set $h_{\ep}= h * \rho_{\ep}$ with $\rho_{\ep}$ a family of mollifiers. Then, $h_{\ep}$ and $h_{\ep} \circ F$ are locally Lipschitz functions. Thus, (\ref{compoder}) holds with $h_{\ep}$ instead of $h$. Since $h_\ep\circ F\to h\circ F$ in $L^1_{loc}(\hat{U})$, one has
$$\frac{\partial (h_{\ep}\circ F)}{\partial x_i}  \rightarrow \frac{\partial (h\circ F)}{\partial x_i}\qquad i=1,\cdots,N$$
in the duality with $C^\infty_c(\hat{U})$, and then in the duality with $C_c(\hat{U})$. On the other hand, using $ F^*(h\mu)= (h\circ F)\, F^*(\mu)$, we can rewrite the right-hand side of (\ref{compoder}) as 
$$ \sum_{s=1}^N \frac{\partial F_s}{\partial x_i} \; F^*\left(\frac{\partial h}{ \partial y_s}\right)=  
\sum_{s=1}^N F^*\left(\frac{\partial F_s}{\partial x_i}  \circ F^{-1} \frac{\partial h}{ \partial y_s}\right).$$
Now, $\mu_{\ep}= \partial h_{\ep} /\partial y_s \H^N$ satisfy the hypotheses in Proposition~\ref{teclemma} (a proof is given in \cite[Theorem 2.2]{AFP} for instance), 
$  \partial h/\partial y_s \in \GM(\hat{V})$, $|\det dF^{-1}|\, \partial F_s/\partial x_i  \circ F^{-1} \in \Cw_{,0}(\hat{V})$ therefore Proposition \ref{teclemma} yields
$$|\det dF^{-1}| \,\frac{\partial F_s}{\partial x_i}  \circ F^{-1}  \; \frac{\partial h_{\ep}}{\partial y_s} \H^N\rightarrow
 |\det dF^{-1}| \,\frac{\partial F_s}{\partial x_i}  \circ F^{-1}  \; \frac{\partial h}{\partial y_s}$$
in the duality with $C_c(\hat{U})$. This implies, using $F^*(\mu)= F^{-1}_{\sharp}(|\det dF^{-1}|\mu)$,
$$ F^* (\frac{\partial F_s}{\partial x_i}  \circ F^{-1}  \;\frac{\partial h_{\ep}}{\partial y_s} \H^N) \rightarrow
F^* (\frac{\partial F_s}{\partial x_i}  \circ F^{-1}  \; \frac{\partial h}{\partial y_s})$$
in the duality with $C_c(\hat{U})$.
By combining this convergence with \eqref{eq:stimamax3},  we get (\ref{compoder}).  
 \end{proof}

\subsubsection{Measure induced by a system of measures}

\begin{defi}[System of Radon measures]\label{def:sysrado}
Let $X^*$ be a $DC_0$ Riemannian manifold with charts $(U_{\alpha},\phi_{\alpha})_{\alpha \in \Lambda}$. A family of Radon measures $(\mu_{\alpha})_{\alpha \in \Lambda}$, where $\mu_{\alpha}$ is a Radon measure on $\phi_{\alpha}(U_{\alpha})$  
 is called
a system of Radon measures if it further satisfies the compatibility condition $F^*(\mu_\beta)=\mu_\alpha$ in $\phi_\alpha(U_\alpha\cap U_\beta)$.
\end{defi}
Note that the above condition is not equivalent to $F_{\sharp}(\mu_\alpha)=\mu_\beta$. However, we have the following result.

\begin{lemma}\label{starpush}Let $F : \hat{U} \longrightarrow \hat{V}$ be a $DC_0$ transition map. 
Suppose we are given $\tilde{\mu} \in \GM(\hat{V})$ and set $\mu=F^*(\tilde\mu)\in \GM(\hat{U})$. Then, 
$$ F_{\sharp} (\sqrt{\det G}\mu)=\sqrt{\det\tilde G}\,\tilde{\mu},$$
where $G$ and $\tilde G$ are the metrics in the respective coordinate systems.
 \end{lemma}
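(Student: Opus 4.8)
The plan is to verify the claimed identity by testing both sides against an arbitrary compactly supported bounded Borel function $\psi$ on $\hat V$ and reducing everything to a change of variables. First I would recall the two key transformation rules already at our disposal: the definition $F^*(\tilde\mu)=(F^{-1})_\sharp(|\det dF^{-1}|\,\tilde\mu)$ from \eqref{eq:ihp3}, and the ``scalar pull-out'' identities \eqref{eq:ihp4} and \eqref{eq:stimamax3}. The factor $\sqrt{\det G}$ appearing on the left is a $\GBV_0$ function on $\hat U$, and the metric compatibility condition (that $G$ and $\tilde G$ are the components of the same covariant $2$-tensor read in the two charts) should give a pointwise relation between $\sqrt{\det G}$ and $\sqrt{\det\tilde G}\circ F$. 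So the first real step is to make the tensoriality of $g$ explicit: under the change of coordinates $F$, one has $G(x)=\,^t dF(x)\,\tilde G(F(x))\,dF(x)$ $\H^N$-almost everywhere, whence $\det G=(\det dF)^2\,(\det\tilde G\circ F)$ and therefore
$$\sqrt{\det G}=|\det dF|\,\bigl(\sqrt{\det\tilde G}\circ F\bigr)\qquad\H^N\text{-a.e. on }\hat U.$$

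With this identity in hand I would compute $F_\sharp(\sqrt{\det G}\,\mu)$ directly. Writing $\sqrt{\det G}=|\det dF|\,(\sqrt{\det\tilde G}\circ F)$ and using $\mu=F^*(\tilde\mu)=(F^{-1})_\sharp(|\det dF^{-1}|\,\tilde\mu)$, the strategy is to push $F_\sharp$ through the product. The cleanest route is: apply \eqref{eq:ihp4} to pull the Borel factor $k=\sqrt{\det\tilde G}$ (pulled back as $k\circ F$) out through $F_\sharp$, which yields $\sqrt{\det\tilde G}\cdot F_\sharp\bigl(|\det dF|\,\mu\bigr)$. It then remains to check that $F_\sharp\bigl(|\det dF|\,\mu\bigr)=\tilde\mu$. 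Since $\mu=(F^{-1})_\sharp(|\det dF^{-1}|\,\tilde\mu)$ and $F_\sharp\circ(F^{-1})_\sharp=\mathrm{id}$, inserting $|\det dF|$ and using the chain rule $|\det dF|\circ F^{-1}\cdot|\det dF^{-1}|=1$ (valid $\tilde\mu$-almost everywhere because $\tilde\mu\in\GM(\hat V)$ and these Jacobian factors are $\GBV_0$, hence defined $\tilde\mu$-a.e. by Remark~\ref{rem1}) collapses the two Jacobian factors to $1$, leaving exactly $\tilde\mu$. Combining the two computations gives $F_\sharp(\sqrt{\det G}\,\mu)=\sqrt{\det\tilde G}\,\tilde\mu$, as desired.

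The main obstacle I anticipate is a careful bookkeeping of where each function is evaluated (on $\hat U$ versus $\hat V$, pre- or post-composed with $F^{\pm1}$) and, more substantively, justifying that the $\H^N$-a.e. identity $|\det dF|\circ F^{-1}\cdot|\det dF^{-1}|=1$ may be used inside an integral against $\tilde\mu$. This is exactly the subtlety flagged in Remark~\ref{rem1}: the Jacobian factors are only $\GBV_0$ functions, so they are genuinely defined merely up to $\H^{N-1}$-finite sets, and the legitimacy of multiplying them under $\tilde\mu$ rests on $\tilde\mu\in\GM(\hat V)$ charging no $\H^{N-1}$-negligible set. I would therefore phrase the final cancellation not as a naive a.e. statement but via the pull-out formulas \eqref{eq:stimamax3}, which are designed precisely to handle locally bounded Borel factors against measures in $\GM$, thereby avoiding any appeal to an $\H^N$-a.e. identity at points where the measure might concentrate. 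A clean write-up would combine \eqref{eq:stimamax3} with the tensoriality relation for $\sqrt{\det G}$ and the elementary push-forward/pull-back interplay, reducing the whole statement to the two algebraic cancellations above.
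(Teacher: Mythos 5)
Your proposal is correct and follows essentially the same route as the paper: both proofs rest on the determinant identity $\sqrt{\det G}=|\det dF|\,(\sqrt{\det\tilde G}\circ F)$ (equivalently $\sqrt{\det\tilde G}=\sqrt{\det G}\circ F^{-1}\,|\det dF^{-1}|$) coming from the tensor compatibility of $g$, combined with the pull-out formulas \eqref{eq:ihp4}, \eqref{eq:stimamax3} and the definition \eqref{eq:ihp3} of $F^*$, and your appeal to Remark~\ref{rem1} to justify the Jacobian cancellation $\tilde\mu$-a.e.\ is exactly the point the paper relies on. The only difference is cosmetic: you push forward the left-hand side while the paper pulls back the right-hand side via $F^{-1}_\sharp$, which are equivalent computations since $F_\sharp\circ(F^{-1})_\sharp=\mathrm{id}$.
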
 
 \begin{proof}
 Let $\psi$ be a bounded Borel function with compact support. In this proof, we will need the fact that
 $$ \begin{array}{ccl}
G & = &  ^t\big( dF\big) \big(\tilde{G} \circ F \big) \left(dF\right)\\
&\mbox{and}&\\
G^{-1} & = &  \left( d_{F(\cdot)} F^{-1}\right) \big( \tilde{G}^{-1} \circ F\big) ^t\left(d_{F(\cdot) } F^{-1}\right).
\end{array}$$
where $^tM$ stands for matrix transposition. These relations are nothing but a reformulation in terms of matrices of the compatibility condition applied to the covariant $2$-tensor $g$ in different charts. This compatibility condition is recalled in Definition~\ref{compacond}. We infer from the equalities above that $ \sqrt{\det \tilde{G}} = \sqrt{\det G} \circ F^{-1} |\det dF^{-1}|$. Thus, we get
\begin{align*}
\langle F^{-1}_{\sharp} (\sqrt{\det\tilde{G}} \,\tilde{\mu}), \psi\rangle &= \int \psi \circ F^{-1}(y)  \sqrt{\det\tilde{G}}(y) \,\tilde{\mu}(dy) \\
					&=\int \psi \circ F^{-1}(y) \sqrt{\det G} \circ F^{-1}(y) |\det dF^{-1}|(y)\, \tilde{\mu}(dy) \\
			               &= \langle \sqrt{\det G}F^*(\tilde{\mu}),\psi\rangle,
\end{align*}
where we used the identity $F^*(h\mu)=(h\circ F)F^*(\mu)$ to get the last equality. 
\end{proof}
For later use, let us also point out that combining the fact that $F$ is a homeomorphism, $F^*(\tilde\mu)= F^{-1}_{\sharp}(|\det dF^{-1}|\tilde \mu)$, and $|\det dF^{-1}|\geq 0$ yields
\begin{equation}\label{TV}
|\mu|=F^*(|\tilde\mu|)
\end{equation}
by uniqueness of the Radon-Nikod\'ym decomposition (see \cite[Corollary 1.29]{AFP} for a precise statement).

\begin{lemma}\label{MeasFSyst} Let $X^*$ be a $DC_0$ Riemannian manifold with charts $(U_{\alpha},\phi_{\alpha})_{\alpha \in \Lambda}$. Then, any system of Radon measures $(\mu_{\alpha})_{\alpha \in \Lambda}$ with $\mu_{\alpha} \in \GM(\phi_{\alpha}(U_{\alpha}))$ for all $\alpha \in \Lambda$, induces a Radon measure 
$\mu \in \GM(X^*)$, characterized by
\begin{equation}\label{eq:eccocosae}
\mu(B)=\int_{\phi_\alpha(B)}\sqrt{\det G_\alpha(x)}\,d\mu_{\alpha} (x)
\qquad\text{for any Borel set $B\subset U_\alpha$.}
\end{equation}
The total variation of this measure is then given by
\begin{equation}\label{eq:eccocosae2}
 |\mu|_g(B)=\int_{\phi_\alpha(B)}\sqrt{\det G_\alpha(x)}\,d|\mu_{\alpha}| (x)
\qquad\text{for any Borel set $B\subset U_\alpha$.}
\end{equation}
Morever, if $\mu_{\alpha} \in \GM_0(\phi_{\alpha}(U_{\alpha}))$ for all $\alpha\in\Lambda$, then $\mu\in \GM_0(X^*)$.
\end{lemma}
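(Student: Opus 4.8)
The plan is to build $\mu$ chart by chart and then glue, the only non-formal ingredient being the compatibility of the local definitions on overlaps, which is exactly what Lemma~\ref{starpush} provides. For each chart $\alpha$ I introduce the Radon measure on $U_\alpha$
\[
\mu_\alpha^* := (\phi_\alpha^{-1})_{\sharp}\bigl(\sqrt{\det G_\alpha}\,\mu_\alpha\bigr);
\]
unwinding the definition of push-forward gives $\mu_\alpha^*(B)=\int_{\phi_\alpha(B)}\sqrt{\det G_\alpha}\,d\mu_\alpha$ for every Borel $B\subset U_\alpha$, which is the right-hand side of \eqref{eq:eccocosae}. It therefore suffices to produce a single global Radon measure $\mu$ with $\mu\res U_\alpha=\mu_\alpha^*$ for every chart $\alpha$ of the maximal atlas.

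First I would check the overlap consistency. Fix $\alpha,\beta$ and set $F=\phi_\beta\circ\phi_\alpha^{-1}$. Pushing the restricted measures forward by the homeomorphism $\phi_\beta$, and using $(\phi_\beta)_{\sharp}(\phi_\alpha^{-1})_{\sharp}=F_{\sharp}$ together with $(\phi_\beta)_{\sharp}(\phi_\beta^{-1})_{\sharp}=\mathrm{id}$, the desired identity $\mu_\alpha^*\res(U_\alpha\cap U_\beta)=\mu_\beta^*\res(U_\alpha\cap U_\beta)$ is equivalent (injectivity of $\phi_\beta$) to
\[
F_{\sharp}\bigl(\sqrt{\det G_\alpha}\,\mu_\alpha\bigr)=\sqrt{\det G_\beta}\,\mu_\beta .
\]
This is precisely the conclusion of Lemma~\ref{starpush} applied with $\tilde\mu=\mu_\beta$ and $\mu=F^*(\mu_\beta)=\mu_\alpha$, the last equality being the compatibility condition of Definition~\ref{def:sysrado}. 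Hence the local definitions agree on overlaps.

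Next comes the gluing. Since $X^*$ is countable at infinity it is paracompact and admits a locally finite countable subatlas $(U_i,\phi_i)_{i\in\N}$; disjointify by $A_1=U_1$ and $A_i=U_i\setminus\bigcup_{j<i}U_j$, so that $X^*=\bigsqcup_i A_i$ with $A_i\subset U_i$ Borel, and set $\mu(B):=\sum_i \mu_i^*(B\cap A_i)$. Local finiteness guarantees that, for any $B$ with compact support, only finitely many terms are nonzero, so $\mu$ is a signed Radon measure in the sense of the paper. To verify $\mu\res U_\alpha=\mu_\alpha^*$ for an \emph{arbitrary} chart $\alpha$ (so that \eqref{eq:eccocosae} holds and is chart-independent), take $B\subset U_\alpha$: since $B\cap A_i\subset U_\alpha\cap U_i$, the consistency step gives $\mu_i^*(B\cap A_i)=\mu_\alpha^*(B\cap A_i)$, and summing over $i$ yields $\mu(B)=\mu_\alpha^*(B)$ by additivity of $\mu_\alpha^*$ over the partition $\{A_i\}$.

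It remains to identify the class of $\mu$ and compute its total variation. By condition (b) of Definition~\ref{def:DC0} the function $\sqrt{\det G_\alpha}$ is nonnegative and locally bounded, so multiplication by it preserves both the vanishing on $\H^{N-1}$-negligible sets and, in the $\GM_0$ case, the vanishing on $\H^{N-1}$-finite sets; as $\phi_\alpha$ is biLipschitz these properties transfer to $\mu_\alpha^*$ and, locally, to $\mu$, giving $\mu\in\GM(X^*)$ and $\mu\in\GM_0(X^*)$ when every $\mu_\alpha\in\GM_0$. Finally, restriction commutes with total variation, so $|\mu|\res U_\alpha=|\mu_\alpha^*|$; since $\sqrt{\det G_\alpha}\geq 0$ and $\phi_\alpha^{-1}$ is a homeomorphism, the same Radon--Nikod\'ym uniqueness argument used for \eqref{TV} yields $|\mu_\alpha^*|=(\phi_\alpha^{-1})_{\sharp}(\sqrt{\det G_\alpha}\,|\mu_\alpha|)$, which is exactly \eqref{eq:eccocosae2}. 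The only genuinely substantial point is the overlap consistency, and that is bought directly from Lemma~\ref{starpush}; the rest is the routine paracompactness-and-disjointification bookkeeping needed to assemble a compatible system of local measures into a global one.
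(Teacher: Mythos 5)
Your core ingredients are the right ones, and they are in fact the same ones the paper uses: the overlap consistency is exactly Lemma~\ref{starpush} combined with the compatibility condition $F^*(\mu_\beta)=\mu_\alpha$ from Definition~\ref{def:sysrado}, and your derivation of \eqref{eq:eccocosae2} is the same Radon--Nikod\'ym/pushforward argument that gives \eqref{TV}. Where you depart from the paper is the gluing mechanism: the paper glues with a Lipschitz $DC_0$ partition of unity (Lemma~\ref{DCpartition}), defining $\int \chi\,d\mu$ as a locally finite sum of integrals of $(\psi_\alpha\chi)\circ\phi_\alpha^{-1}$ against $\sqrt{\det G_\alpha}\,\mu_\alpha$, while you disjointify a locally finite countable subatlas and add up restrictions. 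That difference is not cosmetic, because it is precisely where your argument has a gap.

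The gap is the well-definedness of the terms $\mu_i^*(B\cap A_i)$. With $A_i=U_i\setminus\bigcup_{j<i}U_j$, the set $B\cap A_i$ is a Borel subset of $U_i$ whose closure need not be contained in $U_i$ (the $A_i$ can accumulate on the boundary of $U_i$), even when $\overline{B}$ is compact in $X^*$. Since $\mu_i^*$ is in general a \emph{signed} Radon measure with only locally finite total variation on $U_i$, the quantity $\mu_i^*(B\cap A_i)$ is a priori of the form $\infty-\infty$; the paper's Remark following the definition of $\GM(\Omega)$ warns about exactly this point. The same issue resurfaces when you verify $\mu\res U_\alpha=\mu_\alpha^*$: countable additivity of the signed measure $\mu_\alpha^*$ over the partition $\{A_i\}$ needs $|\mu_\alpha^*|(B)<\infty$. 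Both problems are fixable with tools you already have. Either invoke the shrinking lemma (available since $X^*$ is locally compact, $\sigma$-compact, hence paracompact) to replace the locally finite subatlas $(U_i)$ by open sets $V_i\Subset U_i$ still covering $X^*$, and disjointify the $V_i$, so that $\overline{B\cap A_i}$ becomes a compact subset of $U_i$; or use your own overlap consistency together with a finite cover of $\overline{B}$ by compact sets $K_l\subset U_{j_l}$ to get $|\mu_i^*|(B\cap A_i)\le\sum_l|\mu_{j_l}^*|(K_l)<\infty$. For \eqref{eq:eccocosae} it is also cleaner to state and check the identity for $B\Subset U_\alpha$, which suffices to characterize $\mu$. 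The paper's partition-of-unity route sidesteps all of this because each test function $\psi_\alpha\chi$ is compactly supported inside $U_\alpha$.
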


\begin{proof}
The proof is based on a standard argument involving partitions of unity. Recall that, according to Lemma~\ref{DCpartition}, there exists a locally finite partition of unity $(\psi_{\alpha})_{\alpha \in \Lambda}$ subordinate to $(U_{\alpha})_{\alpha \in \Lambda}$ where $\psi_{\alpha}$ are compactly supported Lipschitz $DC_0$ functions. Then, given any bounded Borel function $\chi$ with compact support, we set 
$$\int_{X^*} \chi \, d\mu:= \sum_{\alpha \in \Lambda} 
\int_{\phi_{\alpha}(U_{\alpha})} (\psi_{\alpha} \chi) \circ \phi_{\alpha}^{-1}(x) \,\sqrt{\det G_\alpha(x)}\,d\mu_{\alpha} (x).$$
Using Lemma \ref{starpush} and (\ref{TV}), it is a standard fact that the measure is well-defined (i.e. that it does not depend on the partition of unity) and
that it satisfies \eqref{eq:eccocosae} and \eqref{eq:eccocosae2}. We refer to \cite{GHL} or to \cite{SpivakI} for more details. 
Moreover, it is immediately seen that $\mu \in \GM_0(X^*)$ whenever all $\mu_{\alpha} \in \GM_0(\phi_{\alpha}(U_{\alpha}))$.
\end{proof}


\section{Tensors on $DC_0$ Riemannian manifolds}\label{tensors}

Our goal in this section is to prove that there is a well-defined tensor calculus on $DC_0$ Riemannian manifolds, including covariant derivative of tensors, provided that the components of the tensor belong to $\GBV$. We restrict our attention to covariant tensors since we are mainly interested in defining the Hessian of a $DC$ function on a $DC_0$  Riemannian manifold (this will be done in the next section). However, we will also need to consider $\GBV$ vector fields as a simple instance of contravariant tensors in order to evaluate our tensor fields, this is done in a dedicated part. It is simple, on the basis of these considerations, to extend our arguments to general tensors; the details are left to the reader.  

\subsection{Definition of tensors on $X^*$}
\begin{defi}[Covariant tensors on an open set $V\subset \R^N$]
A covariant $p$-tensor $S$ on $V$ with $\GBV$ (resp. $\GM$) components is by definition,
$$ S = \sum_{j_1, \cdots,j_p} S_{j_1\cdots j_p} dy^{ j_1}\otimes\cdots\otimes dy^{ j_p}$$
where the multi-indices $(j_1, \cdots, j_p)$ run into $\{1,\cdots,N\}^p$ and $S_{j_1\cdots j_p} \in \GBV(\hat{V})$ (resp. $\GM(\hat{V})$).
\end{defi}

Now, we define the pull-back of  a covariant tensor with components in $\GBV(\hat{V})$ or in $\GM(\hat{V})$, through a transition map.

\begin{defi}[Pull-back of a tensor]
Given $F: \hat{U} \longrightarrow \hat{V}$ a $DC_0$ transition map and $S$ a covariant $p$-tensor on $\hat{V}$ 
with components in $\GBV(\hat{V})$ (resp. $\GM(\hat{V})$), we define a covariant $p$-tensor $F^*S$ on $\hat{U}$ by the formula
\begin{equation}\label{F*S}
F^*S= \sum_{i_1, \cdots, i_p} \sum_{j_1,\cdots j_p}  \frac{\partial F_{j_1}}{\partial x_{i_1}} \cdots \frac{\partial F_{j_p}}{\partial x_{i_p}} F^*(S_{j_1\cdots j_p})\, dx^{i_1} \otimes \cdots \otimes dx^{i_p}.
\end{equation}
\end{defi}

\begin{remark} $F^*S$ has $\GBV$ components according to Theorem \ref{LipCompo}. According to the fact that $\GBV_0(\hat{U})$ is an algebra and  Remark~\ref{rem1}, the components of $F^*S$ belong to $\GBV_0(\hat{U})$ (resp. $\GM_0(\hat{U})$) whenever the components of $S$ belong to $\GBV_0(\hat{V})$ (resp. $\GM_0(\hat{V})$). 
\end{remark}

\begin{defi}[Covariant tensors on a $DC_0$ manifold $X^*$]\label{compacond}
A covariant $p$-tensor $S$ on an open subset $\Omega \subset X^*$ with $\GBV$ (resp. $\GM$) components is, by definition, a family 
$$S = \{S_{\alpha}\}_{\alpha \in \Lambda}$$
where for each $\alpha$ such that $U_{\alpha} \cap \Omega\neq \emptyset$, $S_{\alpha}$ is a covariant $p$-tensor on 
$\phi_{\alpha}(U_{\alpha} \cap \Omega)$ with $\GBV(\phi_{\alpha}(U_{\alpha} \cap \Omega))$ (resp. $\GM(\phi_{\alpha}(U_{\alpha}\cap \Omega))$) 
components. The family of tensors is required to satisfy the compatibility condition
$$ F^*S_2= ( \phi_2 \circ \phi_1^{-1})^*S_2= S_1\qquad\text{{ in $\phi_1(U_1\cap U_2\cap\Omega)$,}} 
$$ 
as Radon measures or for $\H^N$-a.e points, depending on the regularity of the components, and for any pair of charts $(U_1,\phi_1)$, $(U_2,\phi_2)$ such that $U_1 \cap U_2\cap \Omega \neq \emptyset$.
 
We define similarly covariant $p$-tensors $S$ on an open subset $\Omega \subset X^*$ with $\GBV_0$ (resp. $\GM_0$) components.
 \end{defi}

 
 \subsection{Covariant derivative of tensors on $X^*$}
Our next task is to define the covariant derivative $DS$ of a $\GBV$ tensor $S$  on $X^*$. We proceed as above, by defining first $DS$ in a chart and then verifying the compatibility condition.

\subsubsection{Local definition of covariant derivative}
To this aim, we have to introduce the Christoffel symbols.
 
 \begin{defi}[Christoffel symbols]Let $X^* $ be a $DC_0$ manifold with Riemannian metric $g$. The Christoffel symbols are then defined as a collection of Radon measures $\{(\Gamma^{(\alpha)})^k_{ij}\}_{\alpha \in \Lambda}$, where 
  \begin{equation}\label{Chris} \Gamma_{ij}^k = \frac{1}{2} \,\sum_{l=1}^N g^{kl}\left( \frac{\partial g_{li}}{\partial x_j} + \frac{\partial g_{lj}}{\partial x_i} - \frac{\partial g_{ij} }{\partial x_l}\right)
\end{equation}
belongs to $\GM_0$ with singular set $\Si$, the superscript $^{(\alpha)}$ is omitted for simplicity, and 
$g_{ij}$ are the components of the metric in the chart $(U_{\alpha}, \phi_{\alpha})$.
 \end{defi}
\begin{remark} In the appendix, it is proved that the Christoffel symbols read in different charts satisfy the same compatibility relation as on a smooth Riemannian manifold.
\end{remark}

With this definition in hands, we can now introduce the notion of covariant derivative.
 
 \begin{defi}[Covariant derivative of a $\GBV$ tensor in a chart $\hat{V}\subset \R^N$]
 Given a covariant $p$-tensor $S$ on $\hat{V}= \phi(V)$ with $\GBV$ components, we define a covariant $(p+1)$-tensor $DS$ by the formula
$$ DS = \sum_{i_0, \cdots, i_{p}} (DS)_{i_0\cdots i_{p}} dx^{i_0} \otimes \cdots \otimes dx^{i_{p}},$$
where 
\begin{equation}\label{CoDe} (DS)_{i_0\cdots i_{p}} = 
\frac{\partial S_{i_1\cdots i_{p}}}{\partial x_{i_0}}   - \sum_{j=1}^p \sum_{m=1}^N  S_{i_1\cdots i_{j-1}mi_{j+1}\cdots i_p}\displaystyle \Gamma_{i_0i_j}^m.
\end{equation} 
The components of $DS$ belong to $\GM(\hat{V})$.
 \end{defi}
 
 In order to infer from the above definition a well-defined notion of covariant derivative of tensor on any open subset of $X^*$, it remains to prove the compatibility formula
 $$ F^*(DS) = D(F^*S).$$
 
 This is the goal of the next section.

\subsubsection{Covariant derivative of $\GBV$ tensors on $X^*$}

We start with the following fact.

\begin{lemma}\label{lemmaproduct} Let $\Omega \subset \R^N$ be an open set and $f,\,h \in \GBV(\Omega)$. Assume that at least one of the two functions $f,\,h$ belongs to $\GBV_0(\Omega)$. Then, for any $i \in \{1,\cdots,N\}$, it holds
$$ \frac{\partial (fh)}{\partial x_i} = f \frac{\partial h}{\partial x_i} + h \frac{\partial f }{\partial x_i}.$$ 
\end{lemma}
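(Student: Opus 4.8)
The plan is to deduce the formula from the general chain rule for $BV$ functions, the decisive point being that the hypothesis $D^{ju}h=0$ for one of the factors kills the only obstruction to the naive Leibniz rule, namely the jump contribution. Without loss of generality I assume $h\in\GBV_0(\Omega)$ and $f\in\GBV(\Omega)$; since the statement is local and componentwise, I may also work in a ball where both functions are bounded, so that $w=(f,h)\in (L^\infty\cap BV_{loc})^2$. First I would record that the two sides make sense as Radon measures. The product $fh$ is a bounded $BV_{loc}$ function, so $D(fh)$ is a measure; on the right-hand side, $h\,\partial f/\partial x_i$ is meaningful because $h\in\Cwo$ equals its precise representative off an $\H^{N-1}$-negligible set, on which $|Df|$ does not charge by Theorem~\ref{decompo}(a), while $f\,\partial h/\partial x_i$ is meaningful because $f\in\Cw$ equals its precise representative off a set $S$ that is $\sigma$-finite with respect to $\H^{N-1}$, and $Dh$ (having no jump part) is diffuse, hence vanishes on $S$ by Theorem~\ref{decompo}(b).

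Next I would apply the chain rule \cite[Theorem 3.96]{AFP} to $w=(f,h)$ and to $\Phi(a,b)=ab$, after modifying $\Phi$ outside the bounded range of $w$ so that it becomes globally Lipschitz. Writing the diffuse part of a derivative as $D^{ac}+D^{ca}$ and using $\nabla\Phi=(b,a)$, this gives
$$D(fh)=\tilde h\,(D^{ac}f+D^{ca}f)+\tilde f\,(D^{ac}h+D^{ca}h)+(f^+h^+-f^-h^-)\,\nu\,\H^{N-1}\res J_w,$$
where $J_w$ is the jump set of $w$, $\nu$ its normal, and $f^\pm,h^\pm$ the one-sided traces. On the diffuse part the approximate limits $\tilde f,\tilde h$ coincide with the $\Cw$-representatives of $f,h$ off $S$, and since the diffuse measures vanish on $S$ one may replace $\tilde h,\tilde f$ by $h,f$, obtaining $h\,(D^{ac}f+D^{ca}f)+f\,Dh$ (here $Dh=D^{ac}h+D^{ca}h$ because $D^{ju}h=0$).

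The crucial step is the jump term. Since $h\in\GBV_0$ forces $D^{ju}h=0$, hence $\H^{N-1}(J_h)=0$, and since $J_w=J_f\cup J_h$ up to $\H^{N-1}$-negligible sets (because $S_w=S_f\cup S_h$ and $J=S$ modulo $\H^{N-1}$-negligible sets), $\H^{N-1}$-almost every point of $J_w$ lies in $J_f\setminus J_h$. At such a point $h$ is approximately continuous, so $h^+=h^-=h$ and $\nu=\nu_f$, whence $f^+h^+-f^-h^-=h\,(f^+-f^-)$ and the jump term equals $h\,D^{ju}f$. Adding the three contributions yields $D(fh)=h\,(D^{ac}f+D^{ca}f+D^{ju}f)+f\,Dh=h\,Df+f\,Dh$, which is the claim read componentwise.

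The main obstacle, and indeed the whole content of the lemma, is precisely this jump bookkeeping: for two genuinely jumping $BV$ functions the bracket $f^+h^+-f^-h^-$ does not reduce to the naive Leibniz expression (the failure noted in the Introduction), and it is only the vanishing of $D^{ju}h$ that forces $h^+=h^-$ $\H^{N-1}$-a.e.\ on $J_{fh}$ and produces the clean cancellation. An alternative, equivalent route would avoid the chain rule altogether: mollify the $\GBV_0$ factor, $h_\ep=h*\rho_\ep$, write the elementary Leibniz rule for the locally Lipschitz product $fh_\ep$, and pass to the limit, using Proposition~\ref{teclemma} (second alternative, with $\mu=\partial h/\partial x_i\in\GM_0$ and $f\in\Cw$) for the term $f\,\partial h_\ep/\partial x_i$ and dominated convergence for $h_\ep\,\partial f/\partial x_i$; I expect the chain-rule argument above to be the shorter of the two.
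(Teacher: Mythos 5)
Your proof is correct and takes essentially the same route as the paper's: both reduce the Leibniz formula to the $BV$ chain rule of \cite[Theorem 3.96]{AFP}, the decisive observation being that the $\GBV_0$ hypothesis on one factor removes the jump contribution, which is the only obstruction. The paper's proof merely cites that theorem after noting that at most one of the two functions has a jump part in its derivative, whereas you spell out the diffuse/jump bookkeeping explicitly; the substance is identical.
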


\begin{proof}
For mere $\GBV(\Omega)$ functions, the above formula is not true in general because of the jump part of the derivatives (for instance
if $f=h$ is a characteristic function). Under the assumption of the lemma, at most one of the two functions has a jump part in its derivative. 
Then, the standard chain rule formula holds as proved in \cite[Theorem 3.96]{AFP} for instance. 
\end{proof}

\begin{prop}\label{compa} Let $F: \hat{U} \rightarrow \hat{V}$ be a $DC_0$ transition map. Let $S$ be a covariant $p$-tensor with components in $\GBV(\hat{V})$. Then, the following equality of Radon measures holds:
\begin{equation}\label{Comp}
 F^*(DS) = D(F^*S).
 \end{equation}
\end{prop}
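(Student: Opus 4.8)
The plan is to verify the identity $F^*(DS)=D(F^*S)$ component by component, reducing to an explicit computation involving the chain rule of Corollary~\ref{corteclem}, the product rule of Lemma~\ref{lemmaproduct}, and the transformation law of the Christoffel symbols. The first thing I would do is reduce to the case $p=1$, i.e. to a covariant $1$-tensor (a one-form) $S=\sum_j S_j\,dy^j$; the general case follows by the same bookkeeping, since both the pull-back formula \eqref{F*S} and the covariant-derivative formula \eqref{CoDe} act slot-by-slot, and the Leibniz structure of the correction terms is identical in each slot. So I would announce the reduction and then work with $S=\sum_j S_j\,dy^j$, for which $F^*S=\sum_i\bigl(\sum_j \frac{\partial F_j}{\partial x_i}\,F^*(S_j)\bigr)dx^i$.

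The core of the argument is then to compute both sides of \eqref{Comp} and match coefficients of $dx^{i_0}\otimes dx^{i_1}$. For the right-hand side $D(F^*S)$, the coefficient is, by \eqref{CoDe},
\[
\frac{\partial}{\partial x_{i_0}}\Bigl(\sum_j \frac{\partial F_j}{\partial x_{i_1}}\,F^*(S_j)\Bigr)
-\sum_{m}\Bigl(\sum_j \frac{\partial F_j}{\partial x_{m}}\,F^*(S_j)\Bigr)\Gamma^{m}_{i_0 i_1},
\]
where $\Gamma$ denotes the Christoffel symbols in the $x$-chart. I would expand the first term by the product rule of Lemma~\ref{lemmaproduct}: here one factor is $\frac{\partial F_j}{\partial x_{i_1}}\in\GBV_0$ (the components of $DF$ have no jump part, being derivatives of a $DC_0$ map) while $F^*(S_j)\in\GBV$ in general, so the hypothesis of Lemma~\ref{lemmaproduct} is met and the Leibniz rule is licit. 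This produces a term $\sum_j\frac{\partial^2 F_j}{\partial x_{i_0}\partial x_{i_1}}F^*(S_j)$ plus a term $\sum_j\frac{\partial F_j}{\partial x_{i_1}}\frac{\partial}{\partial x_{i_0}}F^*(S_j)$, and to the latter I would apply the chain rule \eqref{compoder} of Corollary~\ref{corteclem} to rewrite $\frac{\partial}{\partial x_{i_0}}F^*(S_j)=\sum_s \frac{\partial F_s}{\partial x_{i_0}}F^*\bigl(\frac{\partial S_j}{\partial y_s}\bigr)$. For the left-hand side $F^*(DS)$, I would write $DS=\sum_{j_0,j_1}\bigl(\frac{\partial S_{j_1}}{\partial y_{j_0}}-\sum_m S_m\tilde\Gamma^{m}_{j_0 j_1}\bigr)dy^{j_0}\otimes dy^{j_1}$ (with $\tilde\Gamma$ the Christoffel symbols in the $y$-chart), pull it back by \eqref{F*S}, and use \eqref{eq:stimamax3} to pull scalar factors through $F^*$. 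Matching the two sides then splits into the \emph{second-derivative} part and the \emph{Christoffel} part.

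The second-derivative terms are exactly what the transformation of the Christoffel symbols is designed to absorb: the extra $\sum_j\frac{\partial^2 F_j}{\partial x_{i_0}\partial x_{i_1}}F^*(S_j)$ coming from the Leibniz expansion on the right must cancel against the inhomogeneous (non-tensorial) part of the Christoffel transformation law, while the homogeneous parts reproduce the pulled-back $\tilde\Gamma$ terms. The main obstacle I expect is precisely the Christoffel transformation law in this low-regularity setting: it is the identity
\[
\sum_m \frac{\partial F_m}{\partial x_k}\,F^*(\tilde\Gamma^{m}_{?\,?})
=\sum \Gamma^{?}_{?\,?}\,(\text{factors of }DF)
+\sum_m \frac{\partial^2 F_m}{\partial x\,\partial x}\,(\ldots),
\]
which is stated in the paper (the Remark after Definition of the Christoffel symbols refers to its proof in the appendix) to hold \emph{as an equality of Radon measures}. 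Since the Christoffel symbols live in $\GM_0$ and carry no jump part, every product I manipulate is covered either by Lemma~\ref{lemmaproduct} (one $\GBV_0$ factor) or by \eqref{eq:stimamax3} (a locally bounded Borel factor times a measure), so no illegitimate product of two jump measures ever occurs; this is the crucial point that makes the formal Riemannian computation go through verbatim. I would therefore cite the appendix for the Christoffel transformation law, substitute it into the expanded left-hand side, and check that all second-order terms cancel and the remaining first-order terms agree, which completes the identification of the coefficients and hence establishes \eqref{Comp}. The final remark to make is that every step is an equality of measures in $\GM(\hat U)$, justified termwise by the two cited rules, so the compatibility holds in the required sense.
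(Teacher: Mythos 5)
Your proposal is correct and follows essentially the same route as the paper's proof: expand both sides in coordinates, apply Lemma~\ref{lemmaproduct} to $D(F^*S)$ (licit because the factors $\partial F_j/\partial x_i$ lie in $\GBV_0$), apply the chain rule \eqref{compoder} and the identity \eqref{eq:stimamax3} to handle $F^*(DS)$, and cancel the second-derivative terms against the inhomogeneous part of the Christoffel transformation law \eqref{TL1} proved in the appendix, using $\sum_m\frac{\partial F_k}{\partial x_m}\frac{\partial F_m^{-1}}{\partial y_\theta}\circ F=\delta_{k\theta}$ in $\GBV_0$. The only deviation is your reduction to $p=1$, where the paper reduces (by linearity and permutation of coordinates) to a single component $S=S_{1\cdots p}\,dy^1\otimes\cdots\otimes dy^p$ but keeps $p$ general; your claim that the general case is identical slot-by-slot bookkeeping is borne out by the paper's computation, the sole extra ingredient for $p\geq 2$ being the Leibniz recombination of the slot-wise second-derivative terms into $\frac{\partial}{\partial x_{i_0}}\bigl(\prod_k \partial F_k/\partial x_{i_k}\bigr)$, which is again covered by Lemma~\ref{lemmaproduct} since all these factors belong to $\GBV_0$.
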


\begin{proof}
By linearity of $F^*$, and up to a permutation of the coordinates, 
it is sufficient to prove (\ref{Comp}) in the case when $S = S_{1\cdots p} \, dy^1\otimes \cdots \otimes dy^{p}$.  We start with $F^*(DS)$. In the following, we set $\i = (i_0,\cdots, i_p)$ and $\j= (j_0,\cdots, j_p)$ for arbitrary multi-indices in $\{1, \cdots, N\}^{p+1}$.
By definition of covariant derivative,
$$ (DS)_{\j}= \frac{\partial S_{j_1 \cdots j_p}}{ \partial y_{j_0}} - 
\sum_{k=1}^p \sum_{m=1}^N S_{j_1\cdots j_{k-1} m j_{k+1} \cdots j_p} \tilde{\Gamma}_{j_0j_k}^m,
$$
where the $\tilde{\Gamma}_{j_0j_k}^m$ stand for the Christoffel symbols in the chart relative to $\hat{V}$. This yields
$$ \begin{array}{rcl}
 F^*(DS) 
 			& = & \sum_{\j} \sum_{\i} \frac{\partial F_{j_0}}{\partial x_{i_0}} \cdots \frac{\partial F_{j_p}}{\partial x_{i_p}} F^*( (DS)_{\j} ) \,dx^{i_0}\otimes \cdots \otimes dx^{i_{p}}.
\end{array}$$
Therefore, the components of $F^*(DS)$ satisfy
\begin{eqnarray}\label{FDS}
\big(  F^*(DS)\big)_{\i} &=& \sum_{\j} \frac{\partial F_{j_0}}{\partial x_{i_0}}\cdots \frac{\partial F_{j_{p}}}{\partial x_{i_{p}}}\Big( F^*\Big( \frac{\partial S_{j_1\cdots j_{p}}}{\partial y_{j_0}}\Big)\Big) \nonumber \\
			&  &    - \sum_{\j} \sum_{k=1}^{p} \sum_{m=1}^N  \frac{\partial F_{j_0}}{\partial x_{i_0}}\cdots \frac{\partial F_{j_{p}}}{\partial x_{i_{p}}} S_{j_1\cdots j_{k-1}mj_{k+1}\cdots j_{p}} \circ F\, F^*\big(  \tilde{\Gamma}_{j_0j_k }^m\big). \nonumber  \\
\hskip -45pt \mbox{Thus, using that } &   S= & S_{1\cdots p} \, dy^1 \otimes \cdots \otimes dy^{p},  \nonumber \\			
		\big(  F^*(DS)\big)_{\i}	&=& \sum_{j_0} 		\frac{\partial F_{j_0}}{\partial x_{i_0}} \frac{\partial F_{1}}{\partial x_{i_1}}\cdots \frac{\partial F_{p}}{\partial x_{i_{p}}}\Big( F^*\Big( \frac{\partial S_{1 \cdots p}}{\partial y_{j_0}}\Big)\Big)  \\	
			&  &   - \sum_{k=1}^p\sum_{j_0, j_k}   \Big(\frac{\partial F_1}{ \partial x_{i_1}} \cdots \hat{\frac{\partial F_k}{ \partial x_{i_k}}} \cdots \frac{\partial F_p}{ \partial x_{i_p}}\Big)  \frac{\partial F_{j_0}}{ \partial x_{i_0}}\frac{\partial F_{j_k}}{ \partial x_{i_k}}  S_{1\cdots p} \circ F\,  F^*\big(  \tilde{\Gamma}_{j_0j_k}^k\big). \nonumber 
\end{eqnarray}
Now, we compute the components of $D(F^*S)$. Starting from
$$ F^* S = \sum_{(i_1,\cdots, i_p)} \frac{\partial F_1}{\partial x_{i_1}} \cdots \frac{\partial F_p}{\partial x_{i_p}} S_{1\cdots p} \circ F dx^{i_1} \otimes\cdots \otimes d^{i_p},$$
we get,
 \begin{eqnarray}\label{DFS1}
\Big( D(F^*S)\Big)_{\i} & =& \frac{\partial}{\partial x_{i_0}} \big( (F^*S)_{i_1 \cdots i_{p}}\big) \nonumber \\
& &   - \sum_{k=1}^{p} \sum_{m=1}^N (F^*S)_{i_1 \cdots i_{k-1}mi_{k+1} \cdots i_{p}}\Gamma_{i_0i_{k}}^m \nonumber \\
                    \Big( D(F^*S)\Big)_{\i}              & = & \frac{\partial F_1}{\partial x_{i_1}} \cdots \frac{\partial F_p}{\partial x_{i_{p}}} \frac{\partial}{\partial x_{i_0}} \big( S_{1 \cdots p} \circ F\big) + \big( S_{1 \cdots p} \circ F\big) \frac{\partial}{\partial x_{i_0}} \Big( \frac{\partial F_1}{\partial x_{i_1}} \cdots \frac{\partial F_p}{\partial x_{i_{p}}}\Big) \\
                                  &     &   - \sum_{k=1}^{p} \sum_{m=1}^N \frac{\partial F_1}{\partial x_{i_1}} \cdots \frac{\partial F_{k-1}}{\partial x_{i_{k-1}}}\frac{\partial F_{k}}{\partial x_{m}} \cdots \frac{\partial F_p}{\partial x_{i_{p}}} \,S_{1\cdots p} \circ F\,\Gamma_{i_0i_{k}}^m.\nonumber
\end{eqnarray}
Now according to (\ref{compoder}) in Corollary~\ref{corteclem}, we have
\begin{equation}\label{DFS2} \frac{\partial}{\partial x_{i_0}} \big( S_{1 \cdots p} \circ F\big) = \sum_{u=1}^N \frac{\partial F_u}{\partial x_{i_0}} F^* \Big( \frac{\partial S_{1 \cdots p}}{\partial y_u}\Big).
\end{equation}

Therefore, according to (\ref{FDS}), (\ref{DFS1}), and (\ref{DFS2}), $F^*(DS)=D(F^*S)$ if and only if for any multi-index $\i$ we have
\begin{multline}\label{DFS3}
- \sum_{k=1}^p\sum_{j_0, j_k}   \Big(\frac{\partial F_1}{ \partial x_{i_1}} \cdots \hat{\frac{\partial F_k}{ \partial x_{i_k}}} \cdots \frac{\partial F_p}{ \partial x_{i_p}}\Big)  \frac{\partial F_{j_0}}{ \partial x_{i_0}}\frac{\partial F_{j_k}}{ \partial x_{i_k}}  S_{1\cdots p} \circ F\,  F^*\big(  \tilde{\Gamma}_{j_0j_k}^k\big) \\
= \big( S_{1 \cdots p} \circ F\big) \frac{\partial}{\partial x_{i_0}} \Big( \frac{\partial F_1}{\partial x_{i_1}} \cdots \frac{\partial F_p}{\partial x_{i_{p}}}\Big)
\\ - \sum_{k=1}^{p} \sum_{m=1}^N \frac{\partial F_1}{\partial x_{i_1}} \cdots \frac{\partial F_{k-1}}{\partial x_{i_{k-1}}}\frac{\partial F_{k}}{\partial x_{m}} \cdots \frac{\partial F_p}{\partial x_{i_{p}}} \,S_{1\cdots p} \circ F\,\Gamma_{i_0i_{k}}^m.
\end{multline}
 This equality can be obtained from the following transformation law of the Christoffel symbols (whose proof in our setting is given in the appendix):
 \begin{equation}\label{TL1} \Gamma_{i_0i_k}^m = \sum_{\theta} \frac{\partial F_m ^{-1}}{\partial y_{\theta}} \circ F \left( \sum_{u,v } \frac{\partial F_u}{\partial x_{i_0}} \frac{\partial F_v}{\partial x_{i_k}} F^* (\tilde{\Gamma}_{uv}^{\theta})\right) \\
+ \sum_{\theta} \frac{\partial  F_m^{-1}}{\partial y_{\theta}} \circ F \frac{\partial ^2 F_{\theta}}{\partial x_{i_0} \partial x_{i_k}}. 
\end{equation}
Indeed, from the above formula we infer
$$\begin{array}{ll}
\displaystyle \sum_{k=1}^{p} \sum_{m=1}^N  & \displaystyle \frac{\partial F_1}{\partial x_{i_1}} \cdots \frac{\partial F_{k-1}}{\partial x_{i_{k-1}}}\frac{\partial F_{k}}{\partial x_{m}} \cdots \frac{\partial F_p}{\partial x_{i_{p}}} \,S_{1\cdots p} \circ F\,\Gamma_{i_0i_{k}}^m  \\
 & \begin{array}{ll}
     \displaystyle = \sum_{k=1}^{p} \sum_{\theta=1}^N &\displaystyle \bigg[\frac{\partial F_1}{\partial x_{i_1}}  \cdots \frac{\partial F_{k-1}}{\partial x_{i_{k-1}}}\hat{\frac{\partial F_{k}}{\partial x_{i_k}}} \cdots \frac{\partial F_p}{\partial x_{i_{p}}} S_{1\cdots p} \circ F \Big( \sum_{m=1}^N \frac{\partial F_k}{\partial x_m} \frac{\partial F_m^{-1}}{\partial y_{\theta} } \circ F \Big) \\
      & \displaystyle \times \Big(\sum_{u,v } \frac{\partial F_u}{\partial x_{i_0}} \frac{\partial F_v}{\partial x_{i_k}}  F^* (\tilde{\Gamma}_{uv}^{\theta}) + \frac{\partial ^2 F_{\theta}}{\partial x_{i_0} \partial x_{i_k}} \Big)\bigg]
       \end{array}   \\
&   \displaystyle
 = \sum_{k=1}^{p} \sum_{j_0,j_k} \bigg[\frac{\partial F_1}{\partial x_{i_1}}  \cdots \frac{\partial F_{k-1}}{\partial x_{i_{k-1}}}\hat{\frac{\partial F_{k}}{\partial x_{i_k}}} \cdots \frac{\partial F_p}{\partial x_{i_{p}}}  \frac{\partial F_{j_0}}{\partial x_{i_0}} \frac{\partial F_{j_k}}{\partial x_{i_k}}   S_{1\cdots p} \circ F  F^* (\tilde{\Gamma}_{j_0j_k}^{k})\bigg]\\
 & \displaystyle \;\;\;\;\;\;\;\;\;\;\;\; + S_{1 \cdots p} \circ F \, \frac{\partial}{\partial x_{i_0}} \Big( \frac{\partial F_1}{\partial x_{i_1}} \cdots \frac{\partial F_p}{\partial x_{i_p}} \Big)
\end{array}$$
where we use that $\sum_{m=1}^N \frac{\partial F_k}{\partial x_m} \frac{\partial F_m^{-1}}{\partial y_{\theta} } \circ F= \delta_{k\theta}$ in $\GBV_0(\hat{U})$ to get the first equality, and Lemma \ref{lemmaproduct} to get the last one. Inserting the last equality into (\ref{DFS3}) gives the result.
\end{proof}


\subsection{Vector fields}

Let us recall that the notion of $DC_0$ Riemannian manifold is a particular instance of Lipschitz manifold. Therefore, vector fields are well-defined objects on such a manifold. In this part, we briefly give the definition of $\Cw$, $\GBV$, and $\GM$ vector fields on a $DC_0$  Riemannian manifold $X^*$ ($\GBV$ vector fields are well-defined thanks to Theorem~\ref{LipCompo} and the fact that $\GBV$ is an algebra). After that, we verify that we can evaluate $\GM$ tensors by means of $\GBV$ or $\Cwo$ vector fields and get an intrinsic system of Radon measures.

\subsubsection{$\Cw$, $\GBV$, and $\GM$ vector fields}

\begin{defi}[$\Cw$, $\GBV$, and $\GM$ vector fields]\label{defiVF}
Let $\Omega \subset X^*$ be an open set. A $\Cw$ (resp. $\GBV$, $\GM$) vector field on $\Omega$ is the datum of a system of vector fields. 
Namely, for any chart $(U,\phi)$ such that  $U\cap \Omega \neq \emptyset$, we are given a vector field 
$$
X_\alpha=(X_\alpha^1,\cdots,X_\alpha^N): \phi_{\alpha}(U_{\alpha}\cap \Omega) \rightarrow \R^N
$$ 
with components in  $\Cw(\phi_{\alpha}(U_{\alpha}\cap \Omega))$ (resp. $\GBV(\phi_{\alpha}(U_{\alpha}\cap \Omega))$, $\GM(\phi_{\alpha}(U_{\alpha}\cap \Omega))$). Moreover, we assume that for any pair of charts 
$(\phi_{\alpha},U_{\alpha})$, $(\phi_{\beta},U_{\beta})$ such that $U_{\alpha} \cap U_{\beta} \cap \Omega \neq \emptyset$, 
the following compatibility condition holds
\begin{equation}\label{contra}
 F^*(X_{\beta}):= \sum_{j=1}^N \frac{\partial F_i^{-1}}{\partial y_j} F^*(X_{\beta }^j)\frac{\partial }{\partial x_i} = X_{\alpha} \qquad \H^N-a.e.
 \end{equation}
\end{defi}

\begin{remark} Our definition of vector field is a bit non-standard. Here, we identify vector field with contravariant $1$-tensor. However, using that $F$ is a biLipschitz homeomorphism differentiable at any point out of the range of $\Si$, it is easily seen that (\ref{contra}) is equivalent to 
\begin{equation}\label{standvf}
\sum_{i=1}^N \frac{\partial F_j}{\partial x_i} X_{\alpha}^i = F^*(X_{\beta}^j) \qquad \H^N-a.e.
\end{equation}
for all $j \in \{1,\cdots,N\}$. In the sequel, we freely use both conditions. 
\end{remark}

\begin{remark}[Existence of local orthonormal frame made of $\Cwo$ vector fields]\label{gramschmidt}
Let $(\frac{\partial}{\partial x_i})_{1\leq i\leq N}$ be the constant vector fields induced by a coordinate system. These vectors obviously belong to $\Cwo(\Si)$. By applying the Gram-Schmidt process to the $(\frac{\partial}{\partial x_i})$ with respect to $g(x)$ at each point $x$ in the domain of the chart where $g$ is defined ({\it i.e} out of the image of $\Si$), we get a local orthornormal frame $(E_i)$ where the $(E_i)$ are continuous out of the image of $\Si$ through the chart (indeed the metric components satisfy this continuity property). Since the $(E_i)$ are unit vectors with respect to $g$, the property (\ref{condiRmetric}) on $g$ yields that the $(E_i)$ are locally bounded out of a $\H^N$-negligible set. Combining this with the continuity property we just mentioned shows the $(E_i)$ are $\Cwo$ vector fields with singular set the image of $\Si$.
 \end{remark}
  
\begin{lemma}\label{prVfield}Let $\Omega \subset X^*$ be an open set and let $X$ be a $\GBV$ vector field on $\Omega$. Then, given two charts $(\phi_{\alpha},U_{\alpha})$, $(\phi_{\beta},U_{\beta})$ such that $U_{\alpha} \cap U_{\beta} \cap \Omega \neq \emptyset$, if $X_{\alpha}$ is a precise representative of $X$ read in the chart $(\phi_{\alpha},U_{\alpha})$, then
$d F_x(X_\alpha(x))$ is a precise representative of $X_{\beta} \circ F$.
\end{lemma}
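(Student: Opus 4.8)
The plan is to argue componentwise: the assertion ``$dF_x(X_\alpha(x))$ is a precise representative of $X_\beta\circ F$'' means, for each $j\in\{1,\dots,N\}$, that the $j$-th component $\sum_{i=1}^N \frac{\partial F_j}{\partial x_i}(x)\,X_\alpha^i(x)$ of $dF_x(X_\alpha(x))$ coincides $\H^{N-1}$-a.e. with the precise representative of $F^*(X_\beta^j)=X_\beta^j\circ F$. Here $X_\alpha^i=(X_\alpha^i)^*$ is the precise representative by hypothesis, and since $F$ is a $DC_0$ map its gradient $\nabla F$ lies in $(\Cwo(\hat U))^N$; hence each $\frac{\partial F_j}{\partial x_i}$ is continuous outside an $\H^{N-1}$-negligible set $S$, where it agrees with its precise representative and where $F$ is genuinely differentiable. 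Thus the displayed component is well-defined $\H^{N-1}$-a.e.\ and is a sum of \emph{products of precise representatives}.

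First I would invoke the compatibility condition \eqref{standvf}, which gives $\sum_{i}\frac{\partial F_j}{\partial x_i}\,X_\alpha^i = F^*(X_\beta^j)$ as an identity of $\GBV(\hat U)$ functions, i.e.\ $\Leb{N}$-a.e. Two $BV_{loc}$ functions coinciding $\Leb{N}$-a.e.\ have the same distributional derivative, hence the same approximate continuity set, the same one-sided traces on their jump set, and therefore the same precise representative $\H^{N-1}$-a.e. So it suffices to show that the pointwise expression $\sum_i\bigl(\frac{\partial F_j}{\partial x_i}\bigr)^*(X_\alpha^i)^*$ is the precise representative of the $\GBV$ function $\sum_i\frac{\partial F_j}{\partial x_i}\,X_\alpha^i$. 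Since sums of precise representatives are precise representatives of sums $\H^{N-1}$-a.e., this reduces to a single product $ab$ with $a=\frac{\partial F_j}{\partial x_i}\in\Cwo(\hat U,S)$ and $b=X_\alpha^i\in\GBV(\hat U)$.

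The heart of the matter is therefore a product rule for precise representatives when one factor has no jump part. I would fix $x\notin S$, so that $a$ is continuous (hence locally bounded) near $x$ with $a(x)=a^*(x)$, and distinguish two cases using Theorem~\ref{decompo}. If $x\notin S_b$, then $b$ is approximately continuous at $x$, so $ab$ is approximately continuous with $(ab)^*(x)=a(x)b^*(x)=a^*(x)b^*(x)$. If $x\in J_b$, with unit normal $\nu(x)$ and one-sided traces $b^\pm(x)$, then from $|a(y)b(y)-a(x)b^\pm(x)|\le |a(y)|\,|b(y)-b^\pm(x)|+|b^\pm(x)|\,|a(y)-a(x)|$, together with the local boundedness and continuity of $a$ at $x$ and the one-sided averaging property of $b$, I obtain that $ab$ admits the one-sided traces $(ab)^\pm(x)=a(x)b^\pm(x)$ along the same half-spaces; hence $(ab)^*(x)=a(x)\,\frac12\bigl(b^+(x)+b^-(x)\bigr)=a^*(x)b^*(x)$. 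Since $S$ is $\H^{N-1}$-negligible and $S_b\setminus J_b$ is $\H^{N-1}$-negligible by Theorem~\ref{decompo}(c), these two cases exhaust $\H^{N-1}$-a.e.\ point, giving $(ab)^*=a^*b^*$ $\H^{N-1}$-a.e.

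Combining the steps, the $j$-th component of $dF_x(X_\alpha(x))$ equals $\sum_i\bigl(\frac{\partial F_j}{\partial x_i}\bigr)^*(X_\alpha^i)^* = \bigl(\sum_i\frac{\partial F_j}{\partial x_i}X_\alpha^i\bigr)^* = \bigl(F^*(X_\beta^j)\bigr)^*$ $\H^{N-1}$-a.e., which is exactly the claim. I expect the product-rule step at jump points of $b$ to be the only genuinely delicate point; the crucial input making it work---and distinguishing it from the general failure of the product rule for $BV$ functions recorded in the introduction and in Lemma~\ref{lemmaproduct}---is precisely that the factor $\frac{\partial F_j}{\partial x_i}$ coming from the $DC_0$ map $F$ has no jump part, being continuous off an $\H^{N-1}$-negligible set.
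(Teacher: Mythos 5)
There is a genuine gap, and it sits exactly at the point you declare to be a matter of interpretation in your first paragraph. You read ``precise representative of $X_\beta\circ F$'' as \emph{the precise representative, computed in $\hat U$, of the $BV$ function $X_\beta^j\circ F$}, i.e. $(X_\beta^j\circ F)^*$, and your argument (compatibility \eqref{standvf} $\H^N$-a.e., plus the product rule $(ab)^*=a^*b^*$ for $a\in\Cwo$, $b\in\GBV$) does correctly prove
$\sum_i\bigl(\tfrac{\partial F_j}{\partial x_i}\bigr)^*(X_\alpha^i)^*=(X_\beta^j\circ F)^*$ $\H^{N-1}$-a.e. But what the lemma must deliver --- see its use in Lemma~\ref{tensoreval}, where the paper identifies every $BV$ function with its precise representative and then multiplies the resulting identity against measures $F^*(S_{\beta,\underline j})\in\GM$ that may charge $(N-1)$-rectifiable sets --- is the statement
$(X_\beta^j)^*\bigl(F(x)\bigr)=\sum_i\tfrac{\partial F_j}{\partial x_i}(x)\,(X_\alpha^i)^*(x)$ off an $\H^{N-1}$-negligible set, i.e. the \emph{composition with $F$ of the precise representative computed in $\hat V$}. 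Your version and the required version differ precisely by the identity $(X_\beta^j\circ F)^*=(X_\beta^j)^*\circ F$ $\H^{N-1}$-a.e., and this is not automatic: an $\H^N$-a.e. identity says nothing about equality on $(N-1)$-dimensional sets, which is the only regime where the lemma has content.

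That missing identity is in fact the entire analytic substance of the paper's proof: one must show that (off $\H^{N-1}$-negligible sets) $F$ maps approximate jump points of $X_\alpha$ to approximate jump points of $X_\beta$, with approximate normal transformed by $\nu\mapsto{}^t(d_xF)^{-1}\nu/\|{}^t(d_xF)^{-1}\nu\|_2$ and with one-sided traces transformed by $d_xF$. The paper does this by a change-of-variables argument on half-balls, using the differentiability of $F$ and $F^{-1}$ at points off the singular set to show that $F^{-1}$ carries the half-balls $B^\pm_\rho(F(x_\alpha),\nu')$ into the corresponding half-balls at $x_\alpha$ up to errors $o(\rho)$, and then the continuity of $dF$ at $x_\alpha$ to control the traces. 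Nothing in your argument performs this comparison of blow-ups upstairs and downstairs; all your steps take place on $\hat U$ alone. Your reduction is not wasted --- combined with the scalar invariance statement ``$(u\circ F)^*=u^*\circ F$ $\H^{N-1}$-a.e. for locally bounded $u\in BV(\hat V)$'' it would yield a clean, modular proof, and your product-rule computation for $(ab)^*$ with $a\in\Cwo$ is correct --- but that invariance statement requires exactly the half-ball argument of the paper, so as written the proof assumes what it needs to prove.
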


\begin{proof}
Since $X_{\alpha}$ coincides with its precise representative on the image of $\Omega\setminus S$ through $\phi_{\alpha}$ and $F$ is differentiable at such points, it suffices to prove that for $\H^{N-1}$ almost every approximate jump point $x_{\alpha}$ of $X_{\alpha}$, $F(x_{\alpha})$ is an approximate jump point of $X_{\beta}$ and the one-sided  approximate limits satisfy 
$$  X_{\beta}^+ (F(x_{\alpha}))+ X_{\beta}^- (F(x_{\alpha})) = d_{x_{\alpha}}F(X_{\alpha}^+(x_{\alpha})+ X_{\alpha}^-(x_{\alpha})).$$
We shall prove this formula for approximate jump points $x_{\alpha} \in \Omega\cap U_{\alpha}\setminus \Si$ which is sufficient for our purpose.

According to Theorem \ref{decompo}, for such a $x_{\alpha}$ there exists a Borel map $\nu : J_{X_{\alpha}} \rightarrow \S^{N-1}$ such that

$$ \lim_{\rho \downarrow 0} \fint_{B_{\rho}^{\pm}(x_{\alpha},\nu(x_{\alpha}))} \| X_{\alpha}(x)-X_{\alpha}^{\pm}(x_{\alpha})\|_2 \, d\Leb{N}(x)=0.  $$

We set $x_{\beta}= F(x_{\alpha})$, $\kappa(F(x)) =(^t(d_{x}F)) ^{-1}(\nu(x))$, and $\nu'(y)= \kappa(y) / \|\kappa(y)\|_{2}$. We have to show that

$$ \lim_{\rho \downarrow 0} \fint_{B_{\rho}^{\pm}(x_{\beta}, \nu'(x_{\beta}))} \| X_{\beta}(y)-d_{x_{\alpha}}F(X_{\alpha}^{\pm}(x_{\alpha}))\|_2 \, d\Leb{N}(y)=0.  $$

To this aim, we point out that $F^{-1}(B_{\rho}(x_{\beta})) \subset B_{Lip (F^{-1})\rho}(x_{\alpha})$
 and $F^{-1}(y) \in B_{\rho}^{\pm}(x_{\alpha},\nu(x_{\alpha}))$ whenever 
 $y \in B_{\rho}^{\pm}(x_{\beta}, \nu'(x_{\beta}))$ and $\rho$ is sufficiently small. Indeed, since $x_{\beta}$ is a regular point by assumption, $F^{-1}$ is differentiable at $x_{\beta}$; hence

\begin{align*}
\langle F^{-1}(y)-F^{-1}(x_{\beta}), \nu(x_{\alpha}) \rangle &= \langle d_{x_{\beta}}F^{-1}(y-x_{\beta}), \nu(x_{\alpha})\rangle + o(\|y- x_{\beta}\|_2) \\
																			&= \langle y-x_{\beta}, ^t(d_{x_{\beta}}F^{-1}) \nu(x_{\alpha}) \rangle +o(\|y- x_{\beta}\|_2) \\
																			&=   \|\kappa(x_{\beta})\|_{2}\langle y-x_{\beta}, \nu'(x_{\beta})\rangle +o(\|y- x_{\beta}\|_2). 
\end{align*}

Therefore, applying the change of variable formula, we get

\begin{align*}
 \int_{B_{\rho}^{\pm}(x_{\beta}, \nu'(x_{\beta})) }&\| X_{\beta}(y)-d_{x_{\alpha}}F(X_{\alpha}^{\pm}(x_{\alpha}))\|_2 \, d\Leb{N}(y) \leq \\
 &\int_{B_{Lip (F^{-1})\rho}^{\pm}(x_{\alpha}, \nu(x_{\alpha}))} \| X_{\beta} \circ F(x)-d_{x_{\alpha}}F(X_{\alpha}^{\pm}(x_{\alpha}))\|_2 \,|\det d_xF| d\Leb{N}(x) \\
 &  \leq (Lip \,F)^N \int_{B_{Lip (F^{-1})\rho}^{\pm}(x_{\alpha}, \nu(x_{\alpha}))} \| d_xF(X_{\alpha}(x))-d_{x_{\alpha}}F(X_{\alpha}^{\pm}(x_{\alpha}))\|_2 \,d\Leb{N}(x) 
\end{align*}
by (\ref{standvf}). We conclude by using that $d_xF$ is continuous at $x_{\alpha}$ and $x_{\alpha}$ is an approximate jump point of $X_{\alpha}$.
\end{proof}

\subsubsection{Covariant derivative of $\GBV$ vector fields}

As for covariant tensors, we start with the definition when read in a chart.

 \begin{defi}[Covariant derivative of a $\GBV$ vector field in a chart $\hat{V}\subset \R^N$]
 Given a vector field $Y$ on $\hat{V}= \phi(V)$ with $\GBV$ components, we define a $(1,1)$-tensor $DY$ by the formula
$$ DY = \sum_{j,s} (DY)_{j}^s \, dy^j \otimes \frac{\partial}{\partial y_s},$$
where 
\begin{equation}\label{CoDeVF} (DY)_{j}^s = \frac{\partial Y_s}{\partial y_{j}}   +  \sum_{v=1}^N  Y_v \,\displaystyle \Gamma_{jv}^s.
\end{equation} 
The components of $DY$ belong to $\GM(\hat{V})$.
 \end{defi}

\begin{defi}[Pull-back of $(1,1)$-tensor]
Given a $DC_0$ transition map $F: \hat{U} \longrightarrow \hat{V}$ and a  $(1,1)$-tensor $S$ a on $\hat{V}$
$$ S= \sum_{j,s} S_{j}^s \, dy^j \otimes \frac{\partial}{\partial y_s}$$  
with components $S_j^s$ in $\GBV(\hat{V})$ (resp. $\GM(\hat{V})$), we define a $(1,1)$-tensor $F^*S$ on $\hat{U}$ by the formula
\begin{equation}
F^*S= \sum_{i,j,k,s}  \frac{\partial F_{j}}{\partial x_{i}}  \frac{\partial F_{k}^{-1}}{\partial y_{s}}\circ F \,F^*(S_j^s)\, dx^{i} \otimes \frac{\partial}{\partial x_k}.
\end{equation}
\end{defi}
We refer to \cite{SpivakI} for a general definition for $(p,k)$-tensors. As for covariant tensors, in order to infer from this local definition a well-defined notion of covariant derivative of $\GBV$ vector field on any open subset of $X^*$, it remains to prove a compatibility formula.
 
\begin{prop} Let $F: \hat{U} \rightarrow \hat{V}$ be a $DC_0$ transition map. 
Let $Y$ be a vector field with components in $\GBV(\hat{V})$. Then, the following compatibility formula holds
\begin{equation}
D(F^*Y) = F^*(DY).
\end{equation}
\end{prop}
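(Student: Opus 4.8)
The plan is to imitate the proof of Proposition~\ref{compa}, comparing the two $(1,1)$-tensors $D(F^*Y)$ and $F^*(DY)$ entry by entry as $\GM$-valued objects; by linearity of $F^*$ it suffices to treat a vector field $Y=Y_s\,\partial/\partial y_s$ with a single non-trivial component. First I would record the components of the contravariant pull-back, which by (\ref{contra})--(\ref{standvf}) read
$$(F^*Y)^k=\sum_s \frac{\partial F_k^{-1}}{\partial y_s}\circ F\,(Y_s\circ F),$$
noting that each factor $\frac{\partial F_k^{-1}}{\partial y_s}\circ F$ lies in $\GBV_0(\hat U)$ because $F^{-1}$ is $DC_0$ and $\GBV_0$ is stable under the relevant compositions. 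Applying the local formula (\ref{CoDeVF}) in the chart $\hat U$,
$$\bigl(D(F^*Y)\bigr)_i^k=\frac{\partial (F^*Y)^k}{\partial x_i}+\sum_v (F^*Y)^v\,\Gamma_{iv}^k,$$
I would expand $\partial(F^*Y)^k/\partial x_i$ by the product rule of Lemma~\ref{lemmaproduct} (legitimate since the factor $\frac{\partial F_k^{-1}}{\partial y_s}\circ F$ is $\GBV_0$) and rewrite both $\partial(Y_s\circ F)/\partial x_i$ and $\partial\bigl(\frac{\partial F_k^{-1}}{\partial y_s}\circ F\bigr)/\partial x_i$ as pull-backs through the chain rule (\ref{compoder}) of Corollary~\ref{corteclem}. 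This presents $(D(F^*Y))_i^k$ as a sum of a term carrying $F^*(\partial Y_s/\partial y_u)$, a term carrying the second derivatives $F^*(\partial^2 F_k^{-1}/\partial y_s\partial y_\theta)$, and the Christoffel term $(F^*Y)^v\Gamma_{iv}^k$.

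In parallel I would expand $\bigl(F^*(DY)\bigr)_i^k=\sum_{j,s}\frac{\partial F_j}{\partial x_i}\frac{\partial F_k^{-1}}{\partial y_s}\circ F\,F^*\bigl((DY)_j^s\bigr)$ using the definition of $(DY)_j^s$. The term carrying $F^*(\partial Y_s/\partial y_j)$ matches the corresponding term in $D(F^*Y)$ at once (after relabelling $u\leftrightarrow j$), so the identity reduces to the Christoffel and second-order contributions. Here I would substitute the transformation law (\ref{TL1}) for $\Gamma_{iv}^k$ into $(F^*Y)^v\Gamma_{iv}^k$ and use the contraction identities $\sum_v \frac{\partial F_b}{\partial x_v}\frac{\partial F_v^{-1}}{\partial y_t}\circ F=\delta_{bt}$, valid in $\GBV_0(\hat U)$ as the chain rule for $F\circ F^{-1}=\mathrm{id}$. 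The homogeneous piece of (\ref{TL1}), after these contractions, reproduces exactly the $F^*(\tilde\Gamma_{jv}^s)$-term of $F^*(DY)$.

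The decisive point, as in the covariant case, is that the inhomogeneous piece of (\ref{TL1}) --- the one carrying $\partial^2 F_\theta/\partial x_i\partial x_v$ --- cancels the leftover second-derivative term $\sum_s(Y_s\circ F)\,\partial\bigl(\frac{\partial F_k^{-1}}{\partial y_s}\circ F\bigr)/\partial x_i$ in $D(F^*Y)$. This cancellation is obtained by differentiating the contraction identity $\sum_\theta \frac{\partial F_k^{-1}}{\partial y_\theta}\circ F\,\frac{\partial F_\theta}{\partial x_v}=\delta_{kv}$ with respect to $x_i$ (again via Lemma~\ref{lemmaproduct}, both factors being $\GBV_0$) and contracting once more against $(F^*Y)^v$. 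I expect the main obstacle to be purely bookkeeping at the measure level: every product- and chain-rule step must respect the hypotheses of Lemma~\ref{lemmaproduct} and Corollary~\ref{corteclem}, so I would check at each stage that in every product at least one factor --- a derivative of the $DC_0$ map $F$ or of $F^{-1}$, or a Jacobian cofactor --- lies in $\GBV_0$, so that the manipulations are genuine equalities of Radon measures rather than mere $\H^N$-a.e.\ identities. Once this is in place, the algebraic cancellation is forced by (\ref{TL1}) and the contraction identities, and the two tensors coincide.
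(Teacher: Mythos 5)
Your proposal is correct and is exactly the argument the paper intends: the paper's ``proof'' is just the remark that it is very similar to Proposition~\ref{compa}, with details left to the reader, and your write-up carries out precisely that adaptation (componentwise comparison, Lemma~\ref{lemmaproduct} and Corollary~\ref{corteclem} for the product/chain rules, the transformation law (\ref{TL1}), and the contraction identities in $\GBV_0$). In particular, your identification of the decisive cancellation --- the inhomogeneous term of (\ref{TL1}) against the derivative of $\frac{\partial F_k^{-1}}{\partial y_s}\circ F$, obtained by differentiating $\sum_\theta \frac{\partial F_k^{-1}}{\partial y_\theta}\circ F\,\frac{\partial F_\theta}{\partial x_v}=\delta_{kv}$ --- is the correct contravariant analogue of the cancellation used in the covariant case.
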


The proof is very similar to that of Proposition~\ref{compa}, thus the details are left to the reader.


\subsection{Norm of tensors}

In order to define the norm of tensor with Radon measures components, we cannot apply the standard definition for smooth tensors on smooth Riemannian manifolds (see for instance \cite[Section 6.3]{petersen}). Indeed, the classical definition involves products of components of the tensor and thus does not apply to our setting. However, in the case when the tensor components are functions, our definition coincides with the standard one.

\subsubsection{Evaluation of tensors}
\begin{defi}[Evaluation of tensors] Given $\Omega\subset X^*$ open, a covariant $p$-tensor $S$ with $\GM$ components on $\Omega$, and $X_1,\cdots,X_p$  vector fields with  $\Cwo(\Omega)$ components, we may evaluate the tensor on the vector fields by the formula
in any local coordinates
$$ S_{\beta}(X_{\beta,1},\cdots, X_{\beta,p}) = \sum_{\underline{j}} X_{\beta,1}^{j_1}\cdots X_{\beta,p}^{j_p} S_{\beta,\underline{j}} .$$
\end{defi}

Note that it is also possible to evaluate tensors by $\GBV$ vector fields. However, it is necessary to use the precise representatives of the vector fields for the formula to make sense, since $S_{\beta,\underline{j}}$ need not be absolutely continuous w.r.t. $\H^N$. Whatever the regularity of the vector fields, what really matters is that 
they are defined everywhere out of a $\H^{N-1}$-negligible set and bounded in the sense of Definition~\ref{def:cw}. 

The following lemma shows that this definition satisfies a compatibility condition. 

\begin{lemma}
\label{tensoreval} Suppose we are given a covariant $p$-tensor $S$ with $\GM$ components defined on an open set $\Omega \subset X^*$ and $\GBV$or $\Cwo$ vector fields $X_1, \cdots, X_p$ on $\Omega$. Then, for any pair of charts $(\phi_{\alpha},U_{\alpha})$, $(\phi_{\beta},U_{\beta})$ such that $U_{\alpha} \cap U_{\beta} \cap \Omega \neq \emptyset$, the following compatibility conditions holds
$$ F^* (S_{\beta} (X_{\beta,1},\cdots, X_{\beta,p}))= S_{\alpha}(X_{\alpha,1},\cdots, X_{\alpha,p})
\qquad\text{in $\phi_\alpha(U_\alpha\cap U_\beta\cap\Omega)$.}$$
Therefore the collection of measures $(S_{\alpha}(X_{\alpha,1},\cdots, X_{\alpha,p}))_{\alpha \in \Lambda}$ in $\GM$ is
 a system of Radon measures on $\Omega$ according to Definition~\ref{def:sysrado}.
\end{lemma}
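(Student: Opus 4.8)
The plan is to verify the single scalar identity
\[
F^*\bigl(S_\beta(X_{\beta,1},\cdots,X_{\beta,p})\bigr)=S_\alpha(X_{\alpha,1},\cdots,X_{\alpha,p})
\qquad\text{in }\phi_\alpha(U_\alpha\cap U_\beta\cap\Omega)
\]
by expanding both sides in local coordinates and matching terms, using the two compatibility conditions we already have: the tensor transformation law \eqref{F*S} for $F^*S_\beta=S_\alpha$, and the vector-field law \eqref{standvf}, namely $\sum_i \frac{\partial F_j}{\partial x_i}X_{\alpha}^i=F^*(X_\beta^j)$ (which for the pulled-back field reads $d_xF(X_\alpha(x))=X_\beta(F(x))$ in the precise-representative sense, by Lemma~\ref{prVfield}). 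First I would write out the right-hand side as $\sum_{\underline i} X_{\alpha,1}^{i_1}\cdots X_{\alpha,p}^{i_p}\,S_{\alpha,\underline i}$ and substitute $S_{\alpha,\underline i}=\sum_{\underline j}\frac{\partial F_{j_1}}{\partial x_{i_1}}\cdots\frac{\partial F_{j_p}}{\partial x_{i_p}}F^*(S_{\beta,\underline j})$. Collecting the sum over each $i_k$ produces factors $\sum_{i_k}\frac{\partial F_{j_k}}{\partial x_{i_k}}X_{\alpha,k}^{i_k}$, which by \eqref{standvf} equal $F^*(X_{\beta,k}^{j_k})$, so the right-hand side becomes $\sum_{\underline j}F^*(X_{\beta,1}^{j_1})\cdots F^*(X_{\beta,p}^{j_p})\,F^*(S_{\beta,\underline j})$.

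The key step is then to recognize this as $F^*$ applied to the product $\sum_{\underline j}X_{\beta,1}^{j_1}\cdots X_{\beta,p}^{j_p}S_{\beta,\underline j}$, i.e.\ exactly $F^*(S_\beta(X_{\beta,1},\cdots,X_{\beta,p}))$. This requires the factorization property $F^*(k\mu)=(k\circ F)F^*(\mu)$ from \eqref{eq:stimamax3}: I would apply it with $\mu=S_{\beta,\underline j}$ and $k=X_{\beta,1}^{j_1}\cdots X_{\beta,p}^{j_p}$, a locally bounded Borel function (bounded in the sense of Definition~\ref{def:cw} since the vector-field components are $\Cwo$, or precise representatives of $\GBV$ fields). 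Note that $F^*(X_{\beta,k}^{j_k})=X_{\beta,k}^{j_k}\circ F$ is precisely the pull-back of the scalar component, so the product of the $F^*(X_{\beta,k}^{j_k})$ factors is $(X_{\beta,1}^{j_1}\cdots X_{\beta,p}^{j_p})\circ F$, matching the function $k\circ F$ that \eqref{eq:stimamax3} pulls out of $F^*(k\,S_{\beta,\underline j})$. Summing over $\underline j$ and using linearity of $F^*$ closes the identity.

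The main obstacle, and the point requiring care rather than computation, is measure-theoretic well-posedness: the components $S_{\beta,\underline j}$ are only Radon measures in $\GM(\hat V)$, so the products $X_{\beta,k}^{j_k}S_{\beta,\underline j}$ must be interpreted as the locally bounded Borel function $X_{\beta,k}^{j_k}$ times the measure, and this product is well-defined only because $S_{\beta,\underline j}$ vanishes on $\H^{N-1}$-negligible sets while the $X_{\beta,k}^{j_k}$ are defined (via precise representatives for $\GBV$ fields) everywhere outside an $\H^{N-1}$-negligible set and are locally bounded there. This is exactly the situation anticipated in the remark after the Evaluation definition, and it is what makes \eqref{eq:stimamax3} applicable term by term. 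Once the scalar identity holds, the final sentence is immediate: the displayed compatibility relation is precisely condition $F^*(\mu_\beta)=\mu_\alpha$ of Definition~\ref{def:sysrado} for the measures $\mu_\alpha:=S_\alpha(X_{\alpha,1},\cdots,X_{\alpha,p})$, so the family is a system of Radon measures on $\Omega$.
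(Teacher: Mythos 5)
Your proposal is correct and uses exactly the paper's ingredients: the tensor compatibility/pull-back formula \eqref{F*S}, the vector-field transformation law \eqref{standvf} upgraded to an $\H^{N-1}$-a.e. identity via Lemma~\ref{prVfield}, and the factorization $F^*(k\mu)=(k\circ F)F^*(\mu)$ of \eqref{eq:stimamax3}, together with the observation that products of locally bounded Borel functions (defined off an $\H^{N-1}$-negligible set) with $\GM$ measures are well defined. The only difference is the direction of the computation --- the paper starts from $F^*(S_{\beta}(X_{\beta,1},\cdots,X_{\beta,p}))$ and unwinds it, while you expand $S_{\alpha}(X_{\alpha,1},\cdots,X_{\alpha,p})$ and rebuild the pull-back --- so this is essentially the same proof.
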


\begin{remark}
Note that given $f$ a $\GBV$ function and $X$ a $\GBV$ vector field defined on $\Omega$, the above result and Lemma \ref{MeasFSyst} yield that $df(X)$ is a well-defined Radon measure on $\Omega$.
\end{remark}
\begin{proof}
By definition,
$$ S_{\beta}(X_{\beta,1},\cdots, X_{\beta,p}) = \sum_{\underline{j}} X_{\beta,1}^{j_1}\cdots X_{\beta,p}^{j_p} S_{j_1\cdots j_p}.$$
In other terms, $S_{\beta}(X_{\beta,1},\cdots, X_{\beta,p})$ is a sum of measures where each summand is a product of a locally bounded Borel function and a Radon measure in $\GM(\phi_\alpha(U_\alpha\cap U_\beta\cap\Omega))$. Therefore, applying (\ref{eq:stimamax3}), we infer
$$ F^*(S_{\beta}(X_{\beta,1},\cdots, X_{\beta,p}))= \sum_{\underline{j}} (X_{\beta,1}^{j_1}\circ F)\cdots (X_{\beta,p}^{j_p}\circ F) F^*(S_{\beta,\underline{j}}).$$
Now, by definition of vector field (see (\ref{standvf})) and according to Lemma \ref{prVfield}, we have the following equality up to a $\H^{N-1}$-negligible set
$$
X_{\beta,k}^{j_k} \circ F  
	= \sum_{1\leq i_k\leq N} \frac{\partial F_{j_k}}{\partial x_{i_k}} X_{\alpha,k}^{i_k}.
$$
Besides, by definition of $F^*$ on tensors,
$$\sum_{\underline{j}} \frac{\partial F_{j_1}}{\partial x_{i_1}}\cdots \frac{\partial F_{j_p}}{\partial x_{i_p}} F^*(S_{\beta, \underline{j}}) 
= (F^*(S_{\beta}))_{i_1,\cdots,i_p}.$$
By combining all of this, we finally get
\begin{align*}
F^*(S_{\beta}(X_{\beta,1},\cdots, X_{\beta,p})) &= \sum_{\underline{i}} (F^*(S_{\beta}))_{\underline{i}} X_{\alpha,1}^{i_1} \cdots X_{\alpha,p}^{i_p}\\
								&= \sum_{\underline{i}}  (S_{\alpha})_{\underline{i}} X_{\alpha,1}^{i_1} \cdots X_{\alpha,p}^{i_p} \\
							&= S_{\alpha}(X_{\alpha,1},\cdots, X_{\alpha,p}),
\end{align*}
and the proof is complete.
\end{proof}

\begin{remark}\label{evalvf}
It is also possible to evaluate a covariant $1$-tensor with $\GBV$ components on a vector field with $\GM$ components. The above proof applies with minor changes.
\end{remark}

\subsubsection{Local definition of the norm of tensor}

We first define the norm of a tensor in a chart. Recall that the existence of local orthonormal frames made of $\Cwo(\Si)$ vector fields
is guaranteed by Remark~\ref{gramschmidt}.

\begin{defi}[Local definition of $|S|_g$]\label{Tensornorm}Let  $S$ be a covariant $p$-tensor defined on an open set
$ \Omega \subset X^*$ with $\GM(\Omega)$ components and $(U_{\alpha},\phi_{\alpha})$ a chart of $X^*$. 
 Let $(E_i)_{1\leq i\leq N}$ be a local orthonormal frame in $\phi_{\alpha}(\Omega\cap U_{\alpha})$ with $E_i\in\Cwo(\phi_{\alpha}(\Omega\cap U_{\alpha}), \phi_{\alpha}(\Si\cap U_{\alpha}))$. 
 We define the norm $|S_{\alpha}|_g$ of $S_{\alpha}$ as the total variation of the $\R^{N^p}$-valued Radon measure defined as
\begin{equation}\label{eq:july4}
(S (E_{i_1},\cdots, E_{i_p}))_{(i_1,\cdots, i_p) \in \{1,\cdots, N\}^p}.
\end{equation}
\end{defi}

Here and in the sequel, we always assume that local orthonormal frames are $\Cwo$ regular out of the image of $\Si$ through the chart. Next, we show that Definition~\ref{Tensornorm} is well posed.

\begin{lemma}Let  $S$ be a covariant $p$-tensor defined on $ \Omega \subset X^*$ with  $\GM(\Omega)$ components and 
let $(U_{\alpha},\phi_{\alpha})$ be a chart of $X^*$. Then, the norm $|S_{\alpha}|_g$ does not depend on the choice of the local frame.  
\end{lemma}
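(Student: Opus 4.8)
The plan is to compare the $\R^{N^p}$-valued measure (\ref{eq:july4}) built from two different local orthonormal frames $(E_i)$ and $(E_i')$ and to show its total variation is unchanged. First I would record that, at every point $x$ outside the $\H^{N-1}$-negligible set $\phi_\alpha(\Si\cap U_\alpha)$ where $g$ is undefined, the two frames are related by an orthogonal change of basis: writing $E_i'=\sum_{k=1}^N O_{ik}E_k$, the orthonormality $\delta_{ij}=g(E_i',E_j')=\sum_{k,l}O_{ik}O_{jl}\,g(E_k,E_l)=\sum_k O_{ik}O_{jk}$ forces $O(x)=(O_{ik}(x))\in O(N)$. Since both frames are $\Cwo$ with the same singular set, the entries $O_{ik}$ are Borel, bounded by $1$, and defined $\H^{N-1}$-a.e.; as the components of $S$ lie in $\GM(\Omega)$ and hence vanish on $\H^{N-1}$-negligible sets, this is enough to make all the manipulations below licit against every component measure.

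Next I would use that the evaluation is multilinear over locally bounded Borel functions — immediate from the defining formula $S(X_1,\dots,X_p)=\sum_{\underline{j}}X_1^{j_1}\cdots X_p^{j_p}S_{\underline{j}}$ together with the fact that multiplying a Radon measure by such a function is linear — to expand
\begin{equation*}
S(E_{i_1}',\dots,E_{i_p}')=\sum_{\underline{k}}O_{i_1k_1}\cdots O_{i_pk_p}\,S(E_{k_1},\dots,E_{k_p}).
\end{equation*}
Denoting by $\mu=(S(E_{k_1},\dots,E_{k_p}))_{\underline{k}}$ and $\mu'=(S(E_{i_1}',\dots,E_{i_p}'))_{\underline{i}}$ the two $\R^{N^p}$-valued measures of Definition~\ref{Tensornorm}, this reads $\mu'=A\mu$ with $A_{\underline{i}\,\underline{k}}=\prod_{r=1}^p O_{i_rk_r}$, i.e. $A$ is the $p$-fold Kronecker power $O^{\otimes p}$. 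A one-line computation, $\sum_{\underline{k}}A_{\underline{i}\,\underline{k}}A_{\underline{j}\,\underline{k}}=\prod_{r=1}^p\big(\sum_{k}O_{i_rk}O_{j_rk}\big)=\prod_{r=1}^p\delta_{i_rj_r}=\delta_{\underline{i}\,\underline{j}}$, shows that $A(x)$ is itself orthogonal for $\H^{N-1}$-a.e. $x$.

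Finally I would conclude by the polar decomposition of vector measures. Writing $\mu=\sigma\,|\mu|$ with $\|\sigma\|_2=1$ $|\mu|$-a.e. (the Radon--Nikod\'ym/polar factorization, see \cite[Corollary 1.29]{AFP}), orthogonality of $A$ gives $\mu'=A\mu=(A\sigma)\,|\mu|$ with $\|A(x)\sigma(x)\|_2=\|\sigma(x)\|_2=1$ for $|\mu|$-a.e. $x$. Hence $A\sigma$ is a unit-norm density of $\mu'$ with respect to $|\mu|$, and by uniqueness of this factorization $|\mu'|=|\mu|$, which is exactly the asserted frame-independence of $|S_\alpha|_g$. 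The only genuinely delicate point is the bookkeeping of the first paragraph — checking that the change-of-frame matrix is defined and bounded off an $\H^{N-1}$-negligible set, so that all products against $\GM$ components make sense; the algebra (multilinearity and orthogonality of the Kronecker power) and the measure-theoretic conclusion (invariance of the total variation under a pointwise orthogonal transformation) are then routine.
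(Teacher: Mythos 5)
Your proposal is correct and follows essentially the same route as the paper's proof: express the new frame via the pointwise change-of-basis matrix, observe that evaluation transforms by its $p$-fold Kronecker power (an orthogonal matrix $\H^{N-1}$-a.e.), and conclude via the polar factorization $\mu=\sigma|\mu|$ that the total variation is unchanged. The only difference is that you spell out the orthogonality of the change-of-frame matrix and of its Kronecker power, which the paper asserts without proof.
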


\begin{proof} Let $(E_i)_{1 \leq i \leq N}$, $(\tilde{E}_i)_{1 \leq i \leq N}$ be local orthonormal frames on $\Omega$
and let $P:\Omega\to\R^{N^2}$ be the change-of-basis matrix with respect to $(E_i)$ and $(\tilde{E}_i)$, i.e.
$\tilde E_i^j=\sum_\ell P_{i\ell} E^\ell_j$. We denote by $\sigma=|\lambda|$ the total variation of the vector-valued measure in \eqref{eq:july4} and by
$\tilde\sigma$ and $\tilde\lambda$ the analogous quantities for $\tilde{E}_i$. Our goal is to show that
$\tilde\sigma$ and $\sigma$
coincide as measures in $\Omega$. To this aim, we will find a representation of $\tilde\lambda$
in terms of $\lambda$. Let $T:\Omega\to\R^{N^p}$ be a Borel unit vector field providing the polar representation
of $\lambda$, i.e.
$$
S (E_{i_1},\cdots, E_{i_p})=T_{\underline i}\, \sigma\quad\qquad \mbox{ for all } \underline i=(i_1,\cdots,i_p) \in \{1,\cdots,N\}^p
$$  
and, writing $S=\sum_{\underline j}S_{\underline j} \,dy^{j_1}\otimes\cdots\otimes dy^{j_p}$, let us compute:
\begin{eqnarray*}
S (\tilde E_{i_1},\cdots, \tilde E_{i_p})&=&\sum_{\underline j}S_{\underline j}\tilde E_{i_1}^{j_1}\cdots \tilde E_{i_p}^{j_p}\\
&=&
\sum_{\ell_1,\cdots,\ell_p=1}^N\sum_{\underline j}S_{\underline j} P_{i_1\ell_1}E^{\ell_1}_{j_1}\cdots P_{i_p\ell_p}E^{\ell_p}_{j_p}\\
&=&
\sum_{\ell_1,\cdots,\ell_p=1}^NP_{i_1\ell_1}\cdots P_{i_p\ell_p} S (E_{\ell_1},\cdots, E_{\ell_p})\\
&=&
\sum_{\ell_1,\cdots,\ell_p=1}^NP_{i_1\ell_1}\cdots P_{i_p\ell_p} \,T_{\ell_1,\cdots,\ell_p}\,\sigma.
\end{eqnarray*}
Setting $(P^*)_{\underline i\underline\ell}=P_{i_1\ell_1}\cdots P_{i_p\ell_p}$, this proves that 
$$
\tilde\lambda_{\underline i}=\sum_{\underline\ell} (P^*)_{\underline i\underline\ell} T_{\underline\ell}\,\sigma.
$$
Since $P$ is an orthogonal matrix and $\|T(x)\|_2=1$, it is easily seen that $\|P^* T(x)\|_2=1$, therefore $|\tilde\lambda|=\sigma$.
\end{proof}

We can now check that $|S|_g$ is a well-defined Radon measure on $\Omega$.

\begin{lemma}\label{measuretensornorm} Let  $S$ be a covariant $p$-tensor defined on $ \Omega \subset X^*$ with  $\GM(\Omega)$ components. Then $(|S_{\alpha}|_g)_{\alpha \in \Lambda}$ is a system of $\GM$ measures. 
Thus, it induces a measure $|S|_g \in \GM(\Omega)$ defined by (\ref{eq:eccocosae2}) called the norm of $S$.
\end{lemma}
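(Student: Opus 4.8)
The plan is to verify that the family $(|S_\alpha|_g)_{\alpha\in\Lambda}$ satisfies the two defining properties of a system of Radon measures in the sense of Definition~\ref{def:sysrado} --- namely that each $|S_\alpha|_g$ lies in $\GM(\phi_\alpha(U_\alpha\cap\Omega))$ and that the compatibility relation $F^*(|S_\beta|_g)=|S_\alpha|_g$ holds on every overlap --- and then to invoke Lemma~\ref{MeasFSyst} to produce the global measure $|S|_g\in\GM(\Omega)$ through formula \eqref{eq:eccocosae2}. For the membership claim, fix a chart and a $\Cwo$ orthonormal frame $(E_i)$ on $\phi_\alpha(U_\alpha\cap\Omega)$. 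By the evaluation formula of Lemma~\ref{tensoreval}, each $S(E_{i_1},\dots,E_{i_p})$ is a finite sum of products of locally bounded Borel functions with the $\GM$ components of $S$; since multiplying a measure in $\GM$ by a bounded Borel function keeps it in $\GM$, every such evaluation belongs to $\GM$. As the total variation of the $\R^{N^p}$-valued measure \eqref{eq:july4} is dominated by the sum of the total variations of its components, we conclude $|S_\alpha|_g\in\GM(\phi_\alpha(U_\alpha\cap\Omega))$.

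For the compatibility relation, fix two charts with $U_\alpha\cap U_\beta\cap\Omega\neq\emptyset$ and choose a $\Cwo$ orthonormal frame $(E_{\beta,i})$ in chart $\beta$. Reading it in chart $\alpha$ through the vector-field pull-back \eqref{contra}, i.e. setting $E_{\alpha,i}=F^*(E_{\beta,i})$, produces vector fields whose components are products of bounded $\Cwo$ functions (derivatives of the $DC_0$ map $F^{-1}$ composed with $F$, times the $\Cwo$ components of $E_{\beta,i}$ composed with $F$) and hence are again $\Cwo$. Since $g$ is a covariant $2$-tensor, orthonormality is preserved, so $(E_{\alpha,i})$ is an orthonormal frame in chart $\alpha$, and by the frame-independence established in the previous lemma we may use it to compute $|S_\alpha|_g$. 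Writing $\lambda_\beta$ and $\lambda_\alpha$ for the $\R^{N^p}$-valued measures \eqref{eq:july4} built from these two frames, Lemma~\ref{tensoreval} gives the componentwise identity $\lambda_{\alpha,\underline i}=F^*(\lambda_{\beta,\underline i})$.

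It remains to pass from this componentwise identity to the identity of total variations $|\lambda_\alpha|=F^*(|\lambda_\beta|)$, which is the crux of the argument and the vector-valued analogue of \eqref{TV}. I would write the polar decomposition $\lambda_\beta=T_\beta\,|\lambda_\beta|$, with $T_\beta$ Borel and $\|T_\beta\|_2=1$ $|\lambda_\beta|$-a.e., so that $\lambda_{\beta,\underline i}=T_{\beta,\underline i}\,|\lambda_\beta|$. Applying \eqref{eq:stimamax3} componentwise (with the bounded Borel function $k=T_{\beta,\underline i}$) yields $\lambda_\alpha=(T_\beta\circ F)\,F^*(|\lambda_\beta|)$. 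Since $\|T_\beta\circ F\|_2=1$ a.e. and $F^*(|\lambda_\beta|)\geq0$ because $|\det dF^{-1}|\geq0$, this is the polar decomposition of $\lambda_\alpha$, and uniqueness of the Radon-Nikod\'ym decomposition (\cite[Corollary 1.29]{AFP}) forces $|\lambda_\alpha|=F^*(|\lambda_\beta|)$, that is $|S_\alpha|_g=F^*(|S_\beta|_g)$.

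The main obstacle is precisely this last interchange of pull-back with total variation: what makes it work is that $F^*$ applies a single scalar Jacobian factor $|\det dF^{-1}|$ uniformly across all $N^p$ components, so the unit polar vector field is merely transported by $F$ and the entire total variation is carried by the scalar measure $|\lambda_\beta|$; had the components been pulled back by different factors this argument would fail. With both the $\GM$-membership and the compatibility relation in hand, the family $(|S_\alpha|_g)_{\alpha\in\Lambda}$ is a system of Radon measures, and Lemma~\ref{MeasFSyst} then produces the desired measure $|S|_g\in\GM(\Omega)$ characterized by \eqref{eq:eccocosae2}.
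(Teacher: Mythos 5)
Your proof is correct and follows essentially the same route as the paper: componentwise compatibility from Lemma~\ref{tensoreval} applied to a pulled-back orthonormal frame (legitimized by the frame-independence lemma), then Lemma~\ref{MeasFSyst} to globalize. The paper's own proof is far terser --- it simply cites Lemmas~\ref{starpush} and \ref{MeasFSyst} --- so your polar-decomposition argument establishing $|\lambda_\alpha|=F^*(|\lambda_\beta|)$ is exactly the vector-valued analogue of \eqref{TV} that the paper leaves implicit, worked out correctly.
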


\begin{proof}
By definition of tensor, for any pair of charts $(\phi_{\alpha},U_{\alpha})$, $(\phi_{\beta},U_{\beta})$ such that $U_{\alpha} \cap U_{\beta} \cap \Omega \neq \emptyset$, the following compatibility conditions holds
$$ F^* ( S_{\beta} (E_{\beta,1},\cdots, E_{\beta,p}))= S_{\alpha}(E_{\alpha,1},\cdots, E_{\alpha,p})$$
where $(E_{\alpha,1},\cdots, E_{\alpha,N})$ and $(E_{\beta,1},\cdots, E_{\beta,N})$ are local orthormal frames in $\phi_{\alpha}(U_{\alpha}\cap \Omega)$ and $\phi_{\beta}(U_{\beta} \cap \Omega)$ respectively. The result then follows from Lemmas \ref{starpush} and \ref{MeasFSyst}.
\end{proof}

\begin{remark} Let $f \in \GBV(\Omega)$, being $\Omega \subset X^*$ an open set. Then, the differential of $f$ is a covariant 1-tensor with $\GM(\Omega)$ components. As a consequence, the above result applies and gives us an intrinsic notion of  ``total variation'' $|df|_g$ of the measure-valued 1-form $df$. 
\end{remark}

The arguments above can easily be adapted to show that there is a well-defined notion of norm for vector field with $\GM$ components. We give the formal definition below and leave the details to the reader.

\begin{defi}[Norm of a vector field with $\GM$ components]\label{NormMeasVF}
Let $X$ be a vector field with $\GM(\Omega)$ components, being $\Omega \subset X^*$ an open set. Then the norm $|X|_g \in \GM(\Omega)$ of $X$ is defined as the nonnegative measure induced by the following system of measures. These local measures are defined in each chart whose domain intersects $\Omega$ as the total variation of the $\R^N$-valued Radon measure $(\theta^1,\cdots,\theta^N)$ where $(\theta^i)_{1\leq i\leq N}$ are the local coordinates of $X$ with respect to a local orthonormal frame. This definition does not depend on the choice of the orthonormal frame.
\end{defi}

As in the smooth setting, the norm of tensor can be used in the following estimate. 

\begin{lemma}\label{CSTensor}
Suppose we are given a covariant p-tensor $S$ with $\GM$ components defined on an open set $\Omega \subset X^*$ and $\GBV$ or $\Cwo$ vector fields $X_1, \cdots, X_p$ on $\Omega$. Then, the following inequality between measures holds
$$ |S(X_1,\cdots, X_p) | \leq   |X_1|_g \cdots |X_p|_g |S|_g,$$
where $|X_i|_g= \sqrt{g(X_i,X_i)}$.
\end{lemma}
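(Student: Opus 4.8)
The plan is to reduce the claimed inequality between measures on $\Omega$ to a single pointwise Cauchy–Schwarz estimate read in an arbitrary chart. All three objects in the statement arise from systems of local measures through Lemma~\ref{MeasFSyst}: the left-hand side $|S(X_1,\cdots,X_p)|$ comes from the system $(|S_\alpha(X_{\alpha,1},\cdots,X_{\alpha,p})|)_\alpha$ (the evaluation is a system of Radon measures by Lemma~\ref{tensoreval}), the measure $|S|_g$ comes from $(|S_\alpha|_g)_\alpha$ (Lemma~\ref{measuretensornorm}), and multiplying $|S|_g$ by the functions $|X_i|_g=\sqrt{g(X_i,X_i)}$ amounts, in chart $\alpha$, to multiplying the local measure $|S_\alpha|_g$ by $|X_{\alpha,1}|_g\cdots|X_{\alpha,p}|_g$, since the coordinate expression of $|X_i|_g$ is precisely the local norm $|X_{\alpha,i}|_g$. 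Each measure is recovered from its local representative by integration against the common, strictly positive weight $\sqrt{\det G_\alpha}$ (see \eqref{eq:eccocosae2}), and this weight has locally bounded inverse by condition (b) of Definition~\ref{def:DC0}. Hence the inequality on $\Omega$ is equivalent to the family of local inequalities
$$ |S_\alpha(X_{\alpha,1},\cdots,X_{\alpha,p})|\le |X_{\alpha,1}|_g\cdots|X_{\alpha,p}|_g\,|S_\alpha|_g\qquad\text{in }\phi_\alpha(U_\alpha\cap\Omega),$$
so it suffices to argue in one fixed chart, and I drop the subscript $\alpha$.

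Next I would fix a local orthonormal frame $(E_i)_{1\le i\le N}$ with $E_i\in\Cwo$, as provided by Remark~\ref{gramschmidt}, and expand each vector field as $X_k=\sum_i\theta_k^i E_i$ with $\theta_k^i=g(X_k,E_i)$. Orthonormality gives $g(X_k,X_k)=\sum_i(\theta_k^i)^2$, so that $|X_k|_g=\|\theta_k\|_2$ with $\theta_k=(\theta_k^1,\cdots,\theta_k^N)$. By multilinearity of the evaluation,
$$ S(X_1,\cdots,X_p)=\sum_{\underline i}\theta_1^{i_1}\cdots\theta_p^{i_p}\,S(E_{i_1},\cdots,E_{i_p}),\qquad \underline i=(i_1,\cdots,i_p).$$
Writing $\lambda_{\underline i}=S(E_{i_1},\cdots,E_{i_p})$, Definition~\ref{Tensornorm} identifies $|S|_g=\sigma$ with the total variation of the $\R^{N^p}$-valued measure $\lambda$; exactly as in the proof of well-posedness of that definition, I take the polar representation $\lambda_{\underline i}=T_{\underline i}\,\sigma$ with $T$ a Borel field satisfying $\|T\|_2=1$ $\sigma$-a.e. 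Substituting yields $S(X_1,\cdots,X_p)=\big(\sum_{\underline i}\theta_1^{i_1}\cdots\theta_p^{i_p}T_{\underline i}\big)\sigma$, whence
$$ |S(X_1,\cdots,X_p)|=\Big|\sum_{\underline i}(\theta_1\otimes\cdots\otimes\theta_p)_{\underline i}\,T_{\underline i}\Big|\,\sigma.$$
Applying the Cauchy–Schwarz inequality in $\R^{N^p}$ together with the factorization $\|\theta_1\otimes\cdots\otimes\theta_p\|_2=\|\theta_1\|_2\cdots\|\theta_p\|_2$ and $\|T\|_2=1$ bounds the scalar factor by $\|\theta_1\|_2\cdots\|\theta_p\|_2=|X_1|_g\cdots|X_p|_g$, which is precisely the desired local inequality.

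The point requiring care, and the only genuine obstacle, is that the vector fields are merely $\GBV$ or $\Cwo$, so the coefficients $\theta_k^i$ are Borel functions defined and locally bounded only off an $\H^{N-1}$-negligible set; consequently the pointwise Cauchy–Schwarz estimate is a priori available only outside that set. This is harmless precisely because $\sigma=|S|_g$ belongs to $\GM$ and therefore vanishes on $\H^{N-1}$-negligible sets (each $\lambda_{\underline i}$ is a locally bounded Borel multiple of $\GM$ components, and the total variation inherits this property). Hence the coefficients are defined $\sigma$-a.e., the pointwise inequality holds $\sigma$-almost everywhere, and this is all that is needed to pass to the inequality between measures. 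The remaining ingredients — the multilinear identity and the polar decomposition — are routine adaptations of the computation already carried out for the well-posedness of Definition~\ref{Tensornorm}.
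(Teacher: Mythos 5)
Your proof is correct and follows essentially the same route as the paper's: reduce to a chart (justified by the system-of-measures lemmas), expand the $X_k$ in a $\Cwo$ orthonormal frame, write the vector measure $\bigl(S(E_{i_1},\cdots,E_{i_p})\bigr)_{\underline i}$ in polar form $T\,|S|_g$ with $\|T\|_2=1$, and conclude by Cauchy--Schwarz together with $\|\theta_1\otimes\cdots\otimes\theta_p\|_2=\|\theta_1\|_2\cdots\|\theta_p\|_2$. Your added remarks on the $\sqrt{\det G_\alpha}$ weights and on the $\H^{N-1}$-a.e.\ definedness of the coefficients only make explicit what the paper leaves implicit.
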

\begin{proof}
Since we proved that $|S(X_1,\cdots, X_p) | $ and $ |S|_g$ are both systems of Radon measures, we can work in local coordinates $(U,\phi)$ omitting the $\cdot_{\alpha}$ for simplicity. Let $(E_i)$ be a local orthonormal frame. Then, according to Definition \ref{Tensornorm}, the Radon-Nikodym theorem implies the existence of a $\R^{N^p}$-valued map $M(x)$ with unit Euclidean norm such that 
$$ (S(E_{i_1},\cdots, E_{i_p}))_{i_1 \cdots, i_p} = M(x)   |S|_g. $$
Decomposing $X_1,\cdots, X_p$ in the basis $(E_i)$ as $ X _s=\sum_{i=1}^N \theta_s^i E_i $,
yields for any Borel set $A$
\begin{align*}
 |S(X_1,\cdots, X_p) (A)| & = |\sum_{i_1 \cdots, i_p}  \theta_1^{i_1} \cdots\theta_p^{i_p}  S(E_{i_1},\cdots,E_{i_p})(A)| \\
 							& \leq \Big|\int_A \sum_{i_1 \cdots, i_p}  \theta_1^{i_1}(x) \cdots\theta_p^{i_p}(x)  M_{i_1 \cdots, i_p}(x) \,d|S|_g (x)\Big|\\
 							& \leq \int_A  \sum_{i_1 \cdots, i_p}  |\theta_1^{i_1}(x)| \cdots|\theta_p^{i_p}(x)| \, |M_{i_1 \cdots, i_p}|(x) \,d|S|_g (x)\\
 							& \leq \int_A  |X_1|_g\cdots| X_p|_g\,d|S|_g (x)\\
 							& \leq  |X_1|_g\cdots| X_p|_g |S|_g(A),
 \end{align*}
where to get the penultimate inequality we use the Cauchy-Schwarz inequality, the fact that $M(x)$ has unit Euclidean norm, and that $(E_i)$ is a local orthormal frame for $g$, hence $ \sum_{i=1}^N (\theta_s^i)^2= |X_s|_g^2$. Consequently, since $A$ is arbitrary, we get the result by the definition of total variation.  
\end{proof}


\section{Hessian and Laplacian of a $DC$ function}\label{laplacian}
In this section, we use the tensor calculus we introduced in order to define the Hessian, its norm, and the Laplacian of a $DC$ function. Let $f: \Omega \subset X^* \longrightarrow \R$ (with $\Omega$ an open set) be a $DC$ function. Then $df$ is a covariant 1-tensor with $\GBV$ components whose representation in a chart $(U,\phi)$ is
$$ d(f \circ \phi^{-1})= \sum_{i=1}^n \frac{\partial (f \circ \phi^{-1})}{\partial x_i} dx^i.$$

\subsection{Hessian of a $DC$ function}

We can define the Hessian of $f$ as follows.

\begin{defi}[Hessian of a $DC$ function]\label{hessian} Let $X^* $ be a $DC_0$ Riemannian manifold and let $f$ be a $DC$ function 
defined on an open subset of $X^*$. We define the Hessian of $f$ as the covariant 2-tensor
$$ \hes f = D df.$$

When read in a chart $(U,\phi)$, the components of $\hes f$ are given by
$$  (\hes_\phi f)_{ij}: = 
\frac{\partial^2 (f\circ \phi^{-1}) }{\partial x_i \partial x_j} - \sum_{k=1}^N \frac{\partial (f \circ \phi^{-1})}{\partial x_k} \Gamma	_{ij}^k.$$
The norm of $\hes f$ is then defined as in Lemma \ref{measuretensornorm}.
\end{defi} 

\begin{remark} Note that the components $(\hes_\phi f)_{ij}$ of $\hes f$ in local coordinates
are symmetric with respect to $i,\,j$. Therefore, the Hessian of a $DC$ function can also be considered as a symmetric matrix-valued Radon measure.
\end{remark}

\begin{prop}\label{FormHessian}
Let $\Omega \subset X^*$ be an open set, $f \in DC(\Omega)$, and $X,\,Y \in \GBV(\Omega)$. Then, the following equality in $\GM(\Omega)$ holds
\begin{equation}\label{eq:ihp2}
\hes f (X,Y)= D(df(Y))(X)- df(D_XY).
\end{equation}
\end{prop}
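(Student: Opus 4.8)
The plan is to verify (\ref{eq:ihp2}) in a fixed chart $(U,\phi)$. Both sides are intrinsically defined elements of $\GM(\Omega)$: the left-hand side is the evaluation of the $\GM$-valued $2$-tensor $\hes f=D\,df$ on the $\GBV$ vector fields $X,Y$ (Lemma~\ref{tensoreval}), while on the right $D(df(Y))(X)$ is the differential of the $\GBV$ function $df(Y)$ evaluated on $X$ and $df(D_XY)$ is the pairing of the $\GBV$ form $df$ with the $\GM$ vector field $D_XY$ (Remark~\ref{evalvf}). Since these are systems of Radon measures, it suffices to compare their local representatives. Writing $f,X=(X^1,\dots,X^N),Y=(Y^1,\dots,Y^N)$ for the objects read in the chart and $\partial_i=\partial/\partial x_i$, the three terms unwind to
$$\hes f(X,Y)=\sum_{i,j}X^{i}Y^{j}\Big(\partial_i\partial_j f-\sum_m (\partial_m f)\,\Gamma_{ij}^m\Big),\qquad df(Y)=\sum_j (\partial_j f)\,Y^j,$$
$$df(D_XY)=\sum_{s}(\partial_s f)\sum_i X^i\Big(\partial_i Y^s+\sum_v Y^v\Gamma_{iv}^s\Big),$$
every product of a function by a measure being read through the precise representative of the function.

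The crux is the computation of $D(df(Y))$. Here Lemma~\ref{lemmaproduct} is \emph{not} available: for a general $DC$ function $\partial_j f$ lies only in $\Cw(\hat U)$ and $Y^j$ lies only in $\GBV(\hat U)$, so both factors may carry a jump part. Instead I would apply the general chain rule \cite[Theorem 3.96]{AFP} to the locally Lipschitz product map $(a,b)\mapsto ab$ and the $\R^2$-valued function $(\partial_j f,Y^j)$, which yields for each $j$ the \emph{precise-representative} Leibniz rule
$$D\big((\partial_j f)\,Y^j\big)=(Y^j)^{*}\,D(\partial_j f)+(\partial_j f)^{*}\,DY^j,$$
where $\cdot^{*}$ denotes the approximate limit on the diffuse part and the arithmetic mean of the one-sided traces on the common jump set. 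The point is that although the naive product rule fails, its averaged version survives, because on the shared jump set $f^+v^+-f^-v^-=\tfrac{f^++f^-}{2}(v^+-v^-)+\tfrac{v^++v^-}{2}(f^+-f^-)$.

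With this in hand I would evaluate $D(df(Y))$ on $X$ (using the precise representative $X^{i*}$, as in Remark~\ref{evalvf}) and subtract $df(D_XY)$. The terms carrying a derivative of $Y$ are $\sum_{i,j}X^{i*}(\partial_j f)^{*}\,\partial_i Y^j$ coming from $D(df(Y))(X)$ and $\sum_{i,s}(\partial_s f)^{*}X^{i*}\,\partial_i Y^s$ coming from $df(D_XY)$; after relabelling $s\to j$ they are identical and cancel. What remains is
$$\sum_{i,j}X^{i*}Y^{j*}\,\partial_i\partial_j f-\sum_{i,j,m}(\partial_m f)^{*}X^{i*}Y^{j*}\,\Gamma_{ij}^m=\sum_{i,j}X^{i*}Y^{j*}(\hes f)_{ij}=\hes f(X,Y),$$
where the second derivative of $f$ and the Christoffel contribution recombine into the Hessian components. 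This is exactly the cancellation phenomenon announced in the introduction.

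I expect the main difficulty to be the bookkeeping with precise representatives rather than a deep estimate. One must be sure every product of a function by a measure is a legitimate element of $\GM(\Omega)$: this holds because the precise representatives $(\partial_m f)^{*},X^{i*},Y^{j*}$ are all defined off an $\H^{N-1}$-negligible set (Theorem~\ref{decompo}(c) and Definition~\ref{def:prec_res}), while the measures $\partial_i\partial_j f$, $DY^s$ and the Christoffel symbols $\Gamma_{ij}^m$ all vanish on $\H^{N-1}$-negligible sets. The genuine mathematical content is that the jump contributions of $\partial f$ and of $Y$ enter only through the symmetric term $\sum_{i,j}X^{i*}(\partial_j f)^{*}\,\partial_i Y^j$, which cancels against its counterpart in $df(D_XY)$, so that the smooth-looking identity (\ref{eq:ihp2}) remains valid despite the presence of jumps.
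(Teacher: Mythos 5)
Your proof is correct, and it reaches the identity by the same overall strategy as the paper: verification in a single chart, where the Christoffel terms and the terms carrying $\partial_i Y^j$ cancel between $D(df(Y))(X)$ and $df(D_XY)$, leaving exactly $\sum_{i,j}X^{i*}Y^{j*}(\hes f)_{ij}$. The genuine difference lies in the technical input used to differentiate the product $(\partial_j f)\,Y^j$. The paper never invokes the full precise-representative Leibniz rule: it splits the derivative into absolutely continuous, Cantor and jump parts, applies the partial Leibniz rule $D^i(uv)=uD^iv+vD^iu$ (valid for the diffuse parts) and then computes the jump densities by hand in three cases --- on $J_{Y^j}\setminus J_{\partial_j f}$, on $J_{\partial_j f}\setminus J_{Y^j}$, and on the common jump set where the normals are first aligned up to sign --- checking that the residual jump measure is precisely the jump part of $Y^j\,\partial^2 f/\partial x_i\partial x_j$. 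Your single formula $D(uv)=u^*Dv+v^*Du$ for locally bounded $BV$ factors, which is essentially \cite[Example 3.97]{AFP} and is obtained exactly as you indicate from the chain rule of Theorem 3.96 together with the averaging identity on the common jump set, packages that entire case analysis into one statement; the paper's three-case computation is in effect a re-derivation of it. What your route buys is brevity and a clean separation between the analytic input (the product rule) and the purely algebraic cancellation; what the paper's route buys is a self-contained display of exactly where the jump cancellation occurs, which is the phenomenon the authors wish to emphasize. Two small points you should make explicit to close the argument: the product rule requires local boundedness of both factors (automatic here, since $f\in DC$ is locally Lipschitz, so $\partial_j f$ is locally bounded, while $Y^j\in\Cw$ admits a locally bounded representative off an $\H^{N-1}$-$\sigma$-finite set), and on $J_{\partial_j f}\cap J_{Y^j}$ one must first normalize the two approximate normals to coincide, permuting the one-sided limits accordingly, exactly as the paper does.
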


\begin{remark} $df$ is a covariant 1-tensor with $\GBV$ components, therefore it makes sense to evaluate it on a $\GM$ vector field, 
see Remark~\ref{evalvf}. $D(df(Y))$ stands for the derivative of the $\GBV$ function $df(Y)$ (which is usually {\it not} written $d(df(Y))$ to avoid confusion with the exterior derivative of differential forms).
\end{remark}
\begin{proof}
It is sufficient to check that both measures coincide when read in a chart. For simplicity, we keep the same notations for the function and the vector fields read in a chart.
Let us recall that read in a chart,
\begin{equation}\label{Form1}
\hes f(X,Y)= \sum_{1\leq i,j\leq N} X_iY_j \Big( \frac{\partial^2 f }{\partial x_i \partial x_j} - \sum_{k=1}^N \frac{\partial f }{\partial x_k} \Gamma	_{ij}^k\Big),
\end{equation}
while the right-hand side of \eqref{eq:ihp2} is the sum of
\begin{equation}\label{Form2}
D(df(Y))(X)= \sum_{1\leq i,j\leq N} X_i\frac{\partial }{\partial x_i}\Big(\frac{\partial f}{\partial x_j}Y_j\Big)
\end{equation}
and, using $D_XY= \sum_{1\leq i,j\leq N} X_i(\frac{\partial Y_j}{\partial x_{i}}   +  \sum_{s=1}^N  Y_s \,\Gamma_{is}^j )\frac{\partial}{\partial x_j}$
\begin{equation}\label{Form3}
-df(D_XY) = -\sum_{1\leq i,j\leq N} \frac{\partial f}{\partial x_j}X_i\frac{\partial Y_j}{\partial x_i} - \sum_{1\leq i,j,s\leq N} X_iY_s{\frac{\partial f}{\partial x_j}}\Gamma_{is}^j.
\end{equation}

From (\ref{Form1}), (\ref{Form2}) and (\ref{Form3}), it is then clear that the absolutely continuous part and the Cantor part of both measures coincide since
$$ D^i(uv)= uD^iv +vD^iu$$
when $u,\,v$ are $BV$ functions and $D^i$ stands for  either the absolutely continuous part or the Cantor part of the derivative, we refer to \cite{AFP} for a proof. Therefore, it remains to study the jump part of the measures. Let us recall that, according to Theorem~\ref{decompo}(d), 
the jump part of the derivative of $f\in BV$ has the following structure
$$ D^{ju} f = (f^+-f^-)\nu_f \,d\H^{N-1}\res{J_f}$$
where $\nu_f: J_f\rightarrow \S^{N-1}$ is a Borel function, the so-called approximate unit normal, and $J_f$ is the set of approximate jump points of $f$. 

To this aim, we fix $i,\,j \in \{1,\cdots,N\}$ and compute the jump part of $\frac{\partial }{\partial x_i}\big(\frac{\partial f}{\partial x_j}Y_j\big)$ appearing in (\ref{Form2}). According to the chain rule formula, we have to consider three cases. Below, we write $\cdot^{ju}$ for the jump part of the partial derivative and $\langle \cdot,\cdot\rangle$ for the standard Euclidean inner product. We also use many times that we are considering the precise representatives of $Y_j$ and $\frac{\partial f}{\partial x_j}$, which are ``undefined'' only on the $\H^{N-1}$-negligible set of approximate discontinuity points which are not approximate jump points. 

First, on $J_{Y_j}\setminus J_{\partial f /\partial x_j}$ one has 
$$\frac{\partial^{ju}}{\partial x_i} \big(\frac{\partial f}{\partial x_j}Y_j\big) = 
\frac{\partial f}{\partial x_j} (Y_j^+-Y_j^-) \,\langle \partial/\partial x_i,\nu_{Y_j}\rangle \,d\H^{N-1}\res{(J_{Y_j}\setminus J_{\partial f /\partial x_j}}).$$
Second, on $J_{\partial f /\partial x_j}\setminus J_{Y_j}$ one has 
$$\frac{\partial^{ju}}{\partial x_i}\big(\frac{\partial f}{\partial x_j}Y_j\big) = 
Y_j\big( \frac{\partial f}{\partial x_j}^+ -  \frac{\partial f}{\partial x_j}^-\big)
 \,\langle \partial/\partial x_i,\nu_{\partial f /\partial x_j}\rangle \,d\H^{N-1}\res{(J_{\partial f /\partial x_j}\setminus J_{Y_j}}).$$ 
Last, assuming with no loss of generality that $\nu_{Y_j}=\nu_{\partial f /\partial x_j}$ $\H^{N-1}$-a.e. on $J_{\partial f /\partial x_j}\cap J_{Y_j}$ (here we use
the fact that the approximate unit normals coincide up to the sign $\H^{N-1}$-a.e. on the intersection of approximate
jump points, see \cite[Example 3.97]{AFP}, and so we can assume that they coincide, up to a permutation of the right and left approximate limits),
on $J_{\partial f /\partial x_j}\cap J_{Y_j}$ one has
$$\frac{\partial^{ju}}{\partial x_i}\big(\frac{\partial f}{\partial x_j}Y_j\big) = 
\bigl(Y_j^+\frac{\partial f}{\partial x_j}^+  -  Y_j^-\frac{\partial f}{\partial x_j}^-\bigr)
 \, \langle \partial/\partial x_i,\nu_{\partial f /\partial x_j} \rangle\,d\H^{N-1}\res{(J_{\partial f /\partial x_j}\cap J_{Y_j})}.$$ 

According to (\ref{Form2}), (\ref{Form3}), we have to compute the density $\theta$ with respect to $\H^{N-1}$ of
$$\frac{\partial^{ju}}{\partial x_i}\big(\frac{\partial f}{\partial x_j}Y_j\big) - \frac{\partial f}{\partial x_j}\frac{\partial^{ju}}{\partial x_i} Y_j.$$
On $J_{Y_j}\setminus J_{\partial f /\partial x_j}$ we have
$$\theta   =\Big(\frac{\partial f}{\partial x_j} (Y_j^+-Y_j^-) -  \frac{\partial f}{\partial x_j} (Y_j^+-Y_j^-)\Big)
\,\langle \partial/\partial x_i,\nu_{Y_j}\rangle=
0.$$
On $J_{\partial f /\partial x_j}\setminus J_{Y_j}$ we have
$$
\theta
= \Big(\big(\frac{\partial f}{\partial x_j}^+-\frac{\partial f}{\partial x_j}^-\big)Y_j\Big)
\,\langle \partial/\partial x_i,\nu_{\partial f /\partial x_j}\rangle.
$$
Finally, on $J_{\partial f /\partial x_j}\cap J_{Y_j}$, writing $\theta=\tilde\theta\langle \partial/\partial x_i,\nu_{\partial f /\partial x_j}\rangle=
\tilde\theta\langle \partial/\partial x_i,\nu_{Y_j}\rangle$, on this intersection we have
\begin{align*}
\tilde\theta =\Big(\big(\frac{\partial f}{\partial x_j}^+Y_j^+& -\frac{\partial f}{\partial x_j}^-Y_j^-\big) - \frac{1}{2}\big(Y_j^+ -Y_j^-\big)\big(\frac{\partial f}{\partial x_j}^++\frac{\partial f}{\partial x_j}^-\big)\Big)\\
& =\frac{1}{2}\Big(\frac{\partial f}{\partial x_j}^+\big(Y_j^++Y_j^-\big) -\frac{\partial f}{\partial x_j}^-\big(Y_j^++Y_j^-)\Big) \\
& =\Big(\big(\frac{\partial f}{\partial x_j}^+ -\frac{\partial f}{\partial x_j}^-\big)Y_j\Big).
\end{align*}
Therefore in each case the measure
$$
\frac{\partial^{ju}}{\partial x_i}\big(\frac{\partial f}{\partial x_j}Y_j\big) - \frac{\partial f}{\partial x_j}\frac{\partial^{ju}}{\partial x_i} Y_j
$$ 
coincides with the jump part of  $Y_j\frac{\partial^2 f }{\partial x_i \partial x_j}$. Since $i,\,j$ are arbitrary, the proof is complete.
\end{proof}


\subsection{Laplacian of a $DC$ function}

In order to define $\Delta^g f$, we proceed as for tensors by defining the Laplacian locally in a chart first and then verifying the required compatibility formula.

\begin{defi}[Local definition of $\Delta^g f$]Let  $f: \Omega \subset X^* \rightarrow \R$ be a $DC$ function with $\Omega$ an open set.
We define the Laplacian $\Delta_\phi f$ of $f$ read in the chart $\phi$ as the trace of 
$\hes_\phi f$ with respect to $g$. More precisely,
if $(U,\phi)$ is a chart of $X^*$ such that $U\cap \Omega\neq \emptyset$, in the system of coordinates induced by $\phi$ one defines
$$\Delta_\phi f: =
\sum_{1 \leq i,j\leq N} g^{ij}\left(\frac{\partial^2(f\circ\phi^{-1})}{\partial x_i \partial x_j} 
- \sum_{k=1}^N \frac{\partial (f \circ \phi^{-1})}{\partial x_k} \Gamma_{ij}^k\right)\qquad\text{in $\phi(U\cap\Omega)$}.$$ 
\end{defi}

Now, we prove that the local definition $\Delta_\phi f$ of $\Delta^g f$ provided above induces a system of Radon measures.

\begin{lemma}\label{compameas}Let $F : \hat{U} \longrightarrow \hat{V}$ be a $DC_0$ transition map relative to the charts $(U,\phi)$ and $(V, \tilde{\phi})$. Let $f : \Omega \longrightarrow \R$ be a $DC$ function and suppose $\Omega \cap U \cap V\neq \emptyset$. Then,
\begin{equation}\label{eq2}
F^*(\Delta_{\tilde\phi} f )  = \Delta_\phi f .
\end{equation}
\end{lemma}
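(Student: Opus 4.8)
The plan is to read both measures in the chart $\phi$ and to exploit the fact that the Laplacian is precisely the metric trace of the Hessian, namely $\Delta_\phi f=\sum_{1\leq i,j\leq N}g^{ij}\,(\hes_\phi f)_{ij}$ in $\phi(U\cap V\cap\Omega)$. Two transformation laws feed into this expression. On the one hand, $\hes f=D(df)$ is already known to be a well-defined covariant $2$-tensor: indeed $df$ is a covariant $1$-tensor by the chain rule of Corollary~\ref{corteclem}, and Proposition~\ref{compa} applied to $S=df$ gives $F^*(\hes_{\tilde\phi}f)=\hes_\phi f$. By the defining formula~\eqref{F*S} for the pull-back, this reads componentwise as
$$
(\hes_\phi f)_{ij}=\sum_{m,n}\frac{\partial F_m}{\partial x_i}\,\frac{\partial F_n}{\partial x_j}\,F^*\bigl((\hes_{\tilde\phi}f)_{mn}\bigr).
$$
On the other hand, the matrix identity $G^{-1}=(d_{F(\cdot)}F^{-1})(\tilde G^{-1}\circ F)\,{}^t(d_{F(\cdot)}F^{-1})$ recalled in Lemma~\ref{starpush} is, componentwise, the contravariant transformation law
$$
g^{ij}=\sum_{k,l}\frac{\partial F_i^{-1}}{\partial y_k}\circ F\;\frac{\partial F_j^{-1}}{\partial y_l}\circ F\;(\tilde g^{kl}\circ F).
$$

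First I would substitute both laws into $\sum_{i,j}g^{ij}(\hes_\phi f)_{ij}$ and regroup the sum over $i$ and the sum over $j$ independently. The key step is then the contraction identity
$$
\sum_i\frac{\partial F_m}{\partial x_i}\,\frac{\partial F_i^{-1}}{\partial y_k}\circ F=\delta_{mk},
$$
which is the entrywise form of $(d_xF)(d_{F(x)}F^{-1})=\mathrm{Id}$. Using it in the $i$-sum (index pair $m,k$) and again in the $j$-sum (index pair $n,l$) collapses the fourfold summation to
$$
\sum_{i,j}g^{ij}(\hes_\phi f)_{ij}=\sum_{k,l}(\tilde g^{kl}\circ F)\,F^*\bigl((\hes_{\tilde\phi}f)_{kl}\bigr).
$$
To conclude I would apply the identity $F^*(h\mu)=(h\circ F)\,F^*(\mu)$ from~\eqref{eq:stimamax3} with $h=\tilde g^{kl}$ and $\mu=(\hes_{\tilde\phi}f)_{kl}$, and then the linearity of $F^*$, to rewrite the right-hand side as $F^*\bigl(\sum_{k,l}\tilde g^{kl}(\hes_{\tilde\phi}f)_{kl}\bigr)=F^*(\Delta_{\tilde\phi}f)$, which is exactly \eqref{eq2}.

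The main obstacle is to justify the index contraction at the level of measures rather than of functions. The objects being contracted are products of $\GM$ measures (the Hessian components, which may charge $\H^{N-1}$-dimensional sets but vanish on $\H^{N-1}$-negligible sets) with the Jacobian and inverse-metric factors; hence the identity $\sum_i (\partial F_m/\partial x_i)\,(\partial F_i^{-1}/\partial y_k)\circ F=\delta_{mk}$ has to be known to hold $\H^{N-1}$-almost everywhere, not merely $\H^N$-almost everywhere. This upgrade is available because the entries of $dF$ and of $dF^{-1}\circ F$ lie in $\Cwo$: both sides of the identity are then continuous off a common $\H^{N-1}$-negligible set and agree $\H^N$-a.e., hence agree off that negligible set. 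This is the very same sense in which the analogous relation is invoked in the proof of Proposition~\ref{compa}, and once it is granted all the manipulations above are licit against the $\GM$ components of the Hessian.
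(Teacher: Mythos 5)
Your proof is correct and follows essentially the same route as the paper's: both reduce the claim to Proposition~\ref{compa} applied to $S=df$ (via Corollary~\ref{corteclem}), the transformation law \eqref{metriccoor} for $G^{-1}$, the identity $F^*(h\mu)=(h\circ F)F^*(\mu)$, and the cancellation $\sum_i \frac{\partial F_m}{\partial x_i}\,\frac{\partial F_i^{-1}}{\partial y_k}\circ F=\delta_{mk}$ understood $\H^{N-1}$-a.e.\ (which the paper, like your $\Cwo$-continuity argument, treats as an identity of $\GBV_0$ functions). The only difference is notational: the paper packages your index contraction in matrix form, writing $F^*(\Delta_{\tilde\phi} f)=tr\bigl(\tilde G^{-1}\circ F\; F^*((\hes_{\tilde\phi} f)_{\cdot\cdot})\bigr)$ and cancelling $(dF)^{-1}(dF)$ inside the trace.
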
 
\begin{proof} Note that in the following argument, the Hessian is considered as symmetric matrix-valued Radon measure. By definition,
 $$ F^*(\Delta_{\tilde\phi} f)  = F^*(tr (\tilde{G}^{-1}\hes_{\tilde\phi} f )).$$
We set $F^*((\hes_{\tilde\phi} f)_{\cdot \cdot})$ the symmetric matrix-valued Radon measure whose entries are 
$F^*((\hes_{\tilde\phi}f)_{ij})$ in the sense of Definition~\ref{pbmf}. With this notation, we can rephrase the above equality in the following way
\begin{equation}\label{eq3}
 F^*(\Delta_{\tilde\phi} f) = tr \left(\tilde{G}^{-1} \circ F F^*((\hes_{\tilde\phi} f)_{\cdot \cdot})\right).
 \end{equation}
From the tensor equality $\hes_\phi f= F^*(\hes_{\tilde\phi} f)$, we infer the following equality of matrices
$$ \hes_\phi f = ^t (dF) F^*((\hes_{\tilde\phi} f )_{\cdot \cdot}) (dF),$$
which is equivalent to
$$F^*((\hes_{\tilde\phi} f )_{\cdot \cdot}) = {}^t(dF)^{-1} \hes_\phi f   (dF)^{-1}.$$
By combining this together with
$$\tilde{G}^{-1} \circ F  =   \left( d F\right) \big( {G}^{-1}\big) ^t\left(d F\right)$$
and then inserting these equalities in (\ref{eq3}), we obtain
$$ F^*(\Delta_{\tilde\phi} f) = tr \left(\big( {G}^{-1}\big)\hes_\phi f  \right)$$
and the proof is complete.
\end{proof}

Now, using Lemma~\ref{compameas} and Lemma~\ref{MeasFSyst}, we can define the Laplacian of a $DC$ function on $\Omega$.

\begin{defi}[Laplacian]Let $f :\Omega \subset X^* \rightarrow \R$ be a $DC$ function. We set $\Delta^g f $ the Radon measure defined by (\ref{eq:eccocosae}).
\end{defi}

Our goal is now to prove that $\Delta^g f$ coincides with the weak Laplacian defined through integration by parts. To this aim we generalize to our setting the following classical formula.

\begin{prop}\label{vollaplacian}Let $X^* $ be a $DC_0$ Riemannian manifold and $f$ be a $DC$ function defined on an open subset of $X^*$. 
Then the local expression $\Delta_\phi f$ of $\Delta^g f$ in a chart $(U,\phi)$ is given by
\begin{equation}\label{dclap2} \Delta_\phi f = \frac{1}{\sqrt{{\rm det\,}G}}\sum_{1 \leq i,j\leq N} \frac{\partial}{ \partial x_i} 
\bigg( g^{ij} \sqrt{{\rm det\,}G} \,\frac{\partial (f \circ \phi^{-1})}{\partial x_j}\bigg).
\end{equation}
\end{prop}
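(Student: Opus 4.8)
The plan is to prove this as an identity of Radon measures in the fixed chart, reducing it to the classical pointwise identity of Riemannian geometry and checking that the low regularity of the data does not obstruct the algebraic manipulations. Throughout I abbreviate $f\circ\phi^{-1}$ by $f$ and set $W=\sqrt{\det G}$; by the ellipticity assumption \eqref{condiRmetric} together with Lemma~\ref{g-1}, both $W$ and $W^{-1}$ are positive, locally bounded and belong to $\GBV_0$, so multiplication by $W$ is an invertible operation on measures. It therefore suffices to establish the equality of $\GM$ measures obtained by multiplying both sides of \eqref{dclap2} by $W$:
\begin{equation}
W\,\Delta_\phi f = \sum_{i,j} \frac{\partial}{\partial x_i}\Bigl(g^{ij} W\,\frac{\partial f}{\partial x_j}\Bigr). \tag{$\star$}
\end{equation}

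First I would expand the right-hand side of $(\star)$ via the product rule. The factor $g^{ij}W$ is a product of $\GBV_0$ functions, hence lies in $\GBV_0$, whereas $\partial f/\partial x_j$ is a component of the gradient of a $DC$ function and so belongs to $\GBV$, possibly with a non-trivial jump part. Since at least one factor lies in $\GBV_0$, Lemma~\ref{lemmaproduct} applies and gives
$$\frac{\partial}{\partial x_i}\Bigl(g^{ij} W\,\frac{\partial f}{\partial x_j}\Bigr) = g^{ij} W\,\frac{\partial^2 f}{\partial x_i \partial x_j} + \frac{\partial f}{\partial x_j}\,\frac{\partial (g^{ij}W)}{\partial x_i},$$
the whole jump contribution being carried by the first term because $g^{ij}W$ has no jump. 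Comparing with the definition of $\Delta_\phi f$, the second-order terms $\sum_{i,j} g^{ij}W\,\partial^2 f/\partial x_i\partial x_j$ coincide verbatim on both sides of $(\star)$, jump parts included. Hence $(\star)$ reduces to the matching of the first-order terms, and since $\partial f/\partial x_k$ enters linearly it suffices to prove, for each fixed $k$, the purely metric identity of $\GM_0$ measures
$$\sum_{i} \frac{\partial (g^{ik}W)}{\partial x_i} = -\sum_{i,j} g^{ij} W\,\Gamma_{ij}^k.$$

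To establish this last identity I would expand its left-hand side by the Leibniz rule (legitimate since all factors are $\GBV_0$), then substitute the formula for the derivative of the inverse metric, $\frac{\partial g^{pk}}{\partial x_l} = -\sum_{a,b} g^{pa} g^{bk}\,\frac{\partial g_{ab}}{\partial x_l}$, obtained by differentiating $\sum_m g^{pm}g_{mk}=\delta^p_k$ (again through Lemma~\ref{lemmaproduct}), and replace $\frac{\partial W}{\partial x_i}$ by its expression in Lemma~\ref{lemmadiffdet}. Expanding the right-hand side from the definition \eqref{Chris} of the Christoffel symbols and using the symmetry of $g^{ij}$ to collapse the first two terms of \eqref{Chris}, one checks that the two sides agree term by term. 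This is precisely the classical smooth computation, and it remains valid verbatim at the level of measures because every product and differentiation it involves concerns only $\GBV_0$ quantities, which have no jump part.

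The one delicate point — and the main conceptual obstacle — is the single passage through the product rule in the second step: the Leibniz rule for $BV$ functions fails when two jump parts are present, so it is essential that $\partial f/\partial x_j$, the only factor that may jump, is always paired with the jump-free factor $g^{ij}W\in\GBV_0$, which is exactly the hypothesis of Lemma~\ref{lemmaproduct}. Once this application is justified, the matching of the (possibly singular) second-order terms is automatic, and all that remains is the standard algebraic identity among the metric coefficients, in which no jump part appears. Dividing $(\star)$ by $W$ then yields \eqref{dclap2}.
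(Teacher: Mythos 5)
Your proof is correct and follows essentially the same route as the paper's: both expand the divergence-form expression using Lemma~\ref{lemmaproduct} and Lemma~\ref{lemmadiffdet}, invoke the relation obtained by differentiating $\sum_j g^{ij}g_{jl}=\delta_{il}$ in $\GBV_0$, and exploit the symmetry of $g^{ij}$ in the Christoffel terms, with the jump part confined throughout to the second-order term exactly as you argue. The only difference is organizational: you clear the factor $\sqrt{\det G}$ at the outset and factor out $\frac{\partial f}{\partial x_k}$ to reduce everything to the metric-only identity $\sum_i \frac{\partial}{\partial x_i}\bigl(g^{ik}\sqrt{\det G}\bigr) = -\sqrt{\det G}\,\sum_{i,j} g^{ij}\Gamma_{ij}^k$, whereas the paper carries $\frac{\partial f}{\partial x_k}$ through both expansions and compares them directly.
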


\begin{proof}
Let us set $\tilde f=f \circ \phi^{-1}$ for notational simplicity. We expand
\begin{align*}
\sum_{1\leq i,j,k\leq N} g^{ij}\frac{\partial \tilde f}{\partial x_k} \Gamma_{ij}^k  =& 
 							\sum_{1\leq i,j,k,l\leq N} \frac{1}{2} \frac{\partial\tilde f}{\partial x_k} g^{ij}g^{kl}\left( \frac{\partial g_{jl}}{\partial x_i}  + \frac{\partial g_{il}}{\partial x_j} - \frac{\partial g_{ij}}{\partial x_l} \right)\\
 							=& 	\sum_{1\leq i,j,k,l \leq N}		 \frac{\partial\tilde f}{\partial x_k} g^{ij}g^{kl}\left[\left( \frac{\partial g_{jl}}{\partial x_i}\right) - \frac{1}{2}  \frac{\partial g_{ij}}{\partial x_l} \right],
\end{align*}		
since $g^{ij}=g^{ji}$. By differentiating with respect to $x_i$ the equality $\sum_{1\leq j\leq N} g^{ij}g_{jl}= \delta_{il}$ in $\GBV_0$, we infer  
  $$ 			\sum_{1\leq i,j,k,l \leq N}\frac{\partial\tilde f}{\partial x_k} g^{ij}g^{kl}\left( \frac{\partial g_{jl}}{\partial x_i}\right) = - \sum_{1\leq i,j,k,l\leq N}	 
   \frac{\partial\tilde f}{\partial x_k} g_{jl}g^{kl} \left( \frac{\partial g^{ij}}{\partial x_i}\right) = - \sum_{1\leq i,j\leq N} \frac{\partial\tilde f}{\partial x_j} \frac{\partial g^{ij}}{\partial x_i},$$
so that the definition of $\Delta_\phi f$ gives
\begin{equation}\label{eq:intermediate_step}
\Delta_\phi f=
\sum_{1 \leq i,j\leq N} g^{ij}\frac{\partial^2\tilde f}{\partial x_i \partial x_j} 
+\sum_{1\leq i,j\leq N} \frac{\partial\tilde f}{\partial x_j} \frac{\partial g^{ij}}{\partial x_i}
+
\frac 12 \sum_{1\leq i,j,k,l \leq N}		 \frac{\partial\tilde f}{\partial x_k} g^{ij}g^{kl}\frac{\partial g_{ij}}{\partial x_l}.
\end{equation}

Now, we expand the right-hand term in (\ref{dclap2}) using Lemma~\ref{lemmaproduct}, and then apply (\ref{diffdet}) 
to get
\begin{align*}
\frac{1}{\sqrt{\det {G}}}\sum_{1 \leq i,j\leq N} \frac{\partial}{ \partial x_i} \bigg( g^{ij} \sqrt{{\rm det\,}G} \,\frac{\partial\tilde f }{\partial x_j}\bigg) = &
 \sum_{1 \leq i,j\leq N} g^{ij}\frac{\partial^2\tilde f}{\partial x_i \partial x_j} + \sum_{1 \leq i,j\leq N}\frac{\partial\tilde f}{\partial x_j}\frac{\partial g^{ij}}{\partial x_i} \\
 & + \sum_{1 \leq i,j\leq N}g^{ij}\frac{\partial\tilde f}{\partial x_j}\frac{1}{\sqrt{{\rm det\,}G}}\frac{\partial \sqrt{{\rm det\,}G}}{\partial x_i}\\
 = &  \sum_{1 \leq i,j\leq N} g^{ij}\frac{\partial^2\tilde f}{\partial x_i \partial x_j} + \sum_{1 \leq i,j\leq N}\frac{\partial\tilde f}{\partial x_j}\frac{\partial g^{ij}}{\partial x_i} \\
 & +  \frac{1}{2} \sum_{1\leq i,j,k,s\leq N} \frac{\partial\tilde f}{\partial x_j} g^{ij}g^{ks} \frac{\partial g_{ks}}{\partial x_i}
 \end{align*}
and the proof can be completed comparing with \eqref{eq:intermediate_step}.
\end{proof}

\begin{prop}\label{IBP}
Let $f : \Omega \subset X^* \longrightarrow \R$ be a $DC$ function and $\psi \in Lip_c (\Omega)$. Then,
$$ \int_{\Omega} g(\nabla f, \nabla \psi) \,dv_g = - \int_{\Omega} \psi \Delta^g f. $$
\end{prop}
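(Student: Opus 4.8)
The plan is to localize the identity by means of a $DC_0$ partition of unity, to read both sides in a single chart, and then to recognize the localized right-hand side as a Euclidean distributional divergence via Proposition~\ref{vollaplacian}; once this is done the statement collapses to the elementary integration-by-parts formula built into the very definition of $BV$.

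First I would use Lemma~\ref{DCpartition} to pick a locally finite $DC_0$ partition of unity $(\psi_\alpha)_{\alpha\in\Lambda}$ subordinate to the atlas. Since $\psi$ has compact support, only finitely many $\psi_\alpha$ meet $\supp\psi$, and $\psi=\sum_\alpha \psi_\alpha\psi$ with each $\psi_\alpha\psi\in Lip_c(U_\alpha\cap\Omega)$ (a product of bounded Lipschitz functions, one of compact support). As both sides of the claimed identity are linear in $\psi$, it suffices to treat the case in which $\psi$ is compactly supported in the domain of a single chart $(U,\phi)$.

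Next I would read everything in that chart, writing $\tilde f=f\circ\phi^{-1}$ and $\tilde\psi=\psi\circ\phi^{-1}$. By the coordinate expression of the Riemannian volume and the definition of $\Delta^g f$ through \eqref{eq:eccocosae} of Lemma~\ref{MeasFSyst}, the two sides become
\begin{equation*}
\int_\Omega g(\nabla f,\nabla\psi)\,dv_g=\int_{\phi(U\cap\Omega)}\sum_{1\le i,j\le N} g^{ij}\frac{\partial\tilde f}{\partial x_i}\frac{\partial\tilde\psi}{\partial x_j}\,\sqrt{\det G}\,dx
\end{equation*}
and
\begin{equation*}
\int_\Omega\psi\,\Delta^g f=\int_{\phi(U\cap\Omega)}\tilde\psi\,\sqrt{\det G}\,d\Delta_\phi f .
\end{equation*}
Proposition~\ref{vollaplacian} then gives, as an equality of measures, $\sqrt{\det G}\,\Delta_\phi f=\sum_{i,j}\frac{\partial}{\partial x_i}\bigl(g^{ij}\sqrt{\det G}\,\frac{\partial\tilde f}{\partial x_j}\bigr)$, which is precisely the distributional divergence of the field whose components are $w_i:=\sum_j g^{ij}\sqrt{\det G}\,\frac{\partial\tilde f}{\partial x_j}$.

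Each $w_i$ belongs to $BV_{loc}(\phi(U))$: the factor $g^{ij}\sqrt{\det G}$ lies in $\GBV_0$ (which is an algebra), the factor $\frac{\partial\tilde f}{\partial x_j}$ lies in $\GBV$, and since at most one of them carries a jump part their product obeys the product rule of Lemma~\ref{lemmaproduct}, so its distributional derivative is a measure. Because $\tilde\psi\in Lip_c$, the defining property of $BV$ yields $\int\tilde\psi\,dD_iw_i=-\int w_i\,\frac{\partial\tilde\psi}{\partial x_i}\,dx$; summing over $i$ and relabeling the summation indices using $g^{ij}=g^{ji}$ reproduces exactly the localized left-hand side, completing the reduction. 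I expect the only delicate points to be the faithful passage from the intrinsic objects $dv_g$ and $\Delta^g f$ to their coordinate densities $\sqrt{\det G}\,dx$ and $\sqrt{\det G}\,d\Delta_\phi f$, together with the verification that the $w_i$ are genuinely $BV_{loc}$ (so that $\sum_i D_iw_i$ is a measure against which a Lipschitz function may be tested); the analytic substance is already carried by Proposition~\ref{vollaplacian}, after which only the standard Euclidean $BV$ integration by parts remains.
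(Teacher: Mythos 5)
Your proof is correct and follows essentially the same route as the paper's: localization by a Lipschitz partition of unity, the divergence-form expression $\sqrt{\det G}\,\Delta_\phi f=\sum_{i,j}\frac{\partial}{\partial x_i}\bigl(g^{ij}\sqrt{\det G}\,\frac{\partial \tilde f}{\partial x_j}\bigr)$ from Proposition~\ref{vollaplacian}, and Euclidean $BV$ integration by parts against a compactly supported Lipschitz test function. The only (harmless) difference is bookkeeping: you absorb the partition functions into $\psi$ and treat one chart at a time by linearity, whereas the paper keeps the sum over charts, tests against $(\psi\theta_\alpha)\circ\phi_\alpha^{-1}$, and kills the cross terms at the end using $\nabla\bigl(\sum_\alpha\theta_\alpha\bigr)=0$ $\H^N$-a.e.
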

\begin{proof}
By using a Lipschitz and locally finite partition of unity $ \sum_{\alpha \in \Lambda} \theta_{\alpha}=1$ subordinate to the atlas of $X^*$, we have
\begin{align*} 
\int \psi \,\Delta^g f = & \sum_{\alpha \in \Lambda} \int_{\phi_{\alpha }(\Omega \cap U_{\alpha})} \psi \circ \phi_{\alpha }^{-1} \theta_{\alpha} \circ \phi_{\alpha }^{-1} \, \sqrt{\det G} \, \Delta_\phi f  \\
		= & \sum_{\alpha \in \Lambda} \int_{\phi_{\alpha }(\Omega \cap U_{\alpha})} \psi \circ \phi_{\alpha }^{-1} \theta_{\alpha} \circ \phi_{\alpha }^{-1} \,\sum_{i,j} \frac{\partial}{\partial x_i} \left(g^{ij} \sqrt{{\rm det\,}G} \,\frac{\partial (f \circ \phi^{-1})}{\partial x_j} \right) \\
		= & -  \sum_{\alpha \in \Lambda}\int_{\phi_{\alpha }(\Omega \cap U_{\alpha})} \sum_{i,j} \frac{\partial (\psi\circ \phi_\alpha^{-1}\theta_{\alpha} \circ \phi_{\alpha }^{-1} ) }{\partial x_i} g^{ij} \sqrt{{\rm det\,}G} \,\frac{\partial (f \circ \phi^{-1})}{\partial x_j} \,dx\\
		= & - \sum_{\alpha \in \Lambda} \int_{\Omega}  \theta_{\alpha} \,g( \nabla \psi, \nabla f ) \,dv_g +0 
		\end{align*}
where, to get the last equality, we use the fact that the partition is locallly finite and thus for $\H^N$-a.e. $x \in \Omega$, $\nabla (\sum_{\alpha \in \Lambda} \theta_{\alpha})(x)=0$.		
\end{proof}

\section{Integration by parts formula for the Hessian}

In this section, we prove that the Hessian of a $DC$ function satisfies the same integration by part formula as in the $\Gamma_2$ calculus, 
see \cite{Bakry-94} for a precise definition. 

To this aim, we generalize to our setting the following classical formula. 

\begin{prop}\label{geod} Let $\psi $ be a $DC$ function. The following equality of Radon measures holds
$$ D_{\nabla \psi} \nabla \psi = \frac 12 \nabla (|\nabla \psi|_g^2) $$
 \end{prop}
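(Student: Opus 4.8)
The plan is to verify the claimed identity $D_{\nabla\psi}\nabla\psi=\frac12\nabla(|\nabla\psi|_g^2)$ by working in a fixed chart and checking the equality of the corresponding systems of Radon measures componentwise. Since $\psi$ is $DC$, its gradient $\nabla\psi$ is a $\GBV$ vector field (indeed a $\Cw$ vector field with $\GBV$ components); the metric components $g_{ij}$ and their inverses $g^{ij}$ lie in $\GBV_0$, so $|\nabla\psi|_g^2=\sum_{i,j}g_{ij}(\nabla\psi)^i(\nabla\psi)^j$ is a well-defined $\GBV$ function whose distributional derivative I can compute. The key structural point is that the vector field $\nabla\psi$ is the metric-raised version of the $1$-form $df$; concretely, its contravariant components in the chart are $(\nabla\psi)^k=\sum_\ell g^{k\ell}\partial\psi/\partial x_\ell$, and I will use the Hessian formula from Proposition~\ref{FormHessian} together with the compatibility of $\hes$ with the metric.

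First I would write both sides in coordinates. The left-hand side $D_{\nabla\psi}\nabla\psi$ is evaluated using the covariant derivative formula \eqref{CoDeVF} for $\GBV$ vector fields, contracted against $\nabla\psi$: its $k$-th contravariant component is $\sum_i(\nabla\psi)^i\bigl(\partial(\nabla\psi)^k/\partial x_i+\sum_v(\nabla\psi)^v\Gamma_{iv}^k\bigr)$. The right-hand side is $\frac12\nabla(|\nabla\psi|_g^2)$, whose $k$-th contravariant component is $\frac12\sum_\ell g^{k\ell}\partial/\partial x_\ell\bigl(\sum_{a,b}g_{ab}(\nabla\psi)^a(\nabla\psi)^b\bigr)$. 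The natural strategy is to lower indices using the metric and show instead the equivalent covariant identity, namely that for every index $m$ one has $\sum_k g_{mk}(D_{\nabla\psi}\nabla\psi)^k=\frac12\,\partial/\partial x_m\bigl(|\nabla\psi|_g^2\bigr)$. The left side of this lowered identity is exactly $\hes\psi(\nabla\psi,\partial/\partial x_m)$ after recognizing that $\sum_k g_{mk}(D_X\nabla\psi)^k=\hes\psi(X,\partial/\partial x_m)$, which follows from the relation $\hes f=D\,df$ in Definition~\ref{hessian} and the metric-compatibility of the connection encoded in the Christoffel symbols \eqref{Chris}.

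The heart of the computation is then a metric-compatibility identity: expanding $\frac12\partial_m|\nabla\psi|_g^2=\frac12\partial_m\bigl(\sum_{a,b}g^{ab}\partial_a\psi\,\partial_b\psi\bigr)$ and rewriting everything in terms of $\partial^2\psi$, $\partial g_{ij}$, and $\Gamma_{ij}^k$ should reproduce $\hes\psi(\nabla\psi,\partial/\partial x_m)=\sum_{i}(\nabla\psi)^i\bigl(\partial_i\partial_m\psi-\sum_k\partial_k\psi\,\Gamma_{im}^k\bigr)$. In the smooth case this is the standard identity $\langle\nabla_X\nabla\psi,Y\rangle=\hes\psi(X,Y)$ combined with $X(|\nabla\psi|^2)=2\hes\psi(\nabla\psi,X)$; here the algebra is identical once derivatives are interpreted distributionally.

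The main obstacle will be the legitimacy of the product and chain rules at the level of $\GBV$ functions with nontrivial jump parts. The products appearing involve $(\nabla\psi)^i$, whose components are genuine $\GBV$ functions possibly carrying a jump part, multiplied against factors $g^{ij}$ or $\partial_j\psi$ which are also $\GBV$. To differentiate such products I must invoke Lemma~\ref{lemmaproduct}, which requires at least one factor to lie in $\GBV_0$; the metric factors $g_{ij},g^{ij}$ and $\sqrt{\det G}$ do lie in $\GBV_0$, so each differentiation I perform must be arranged so that the $\GBV_0$ factor carries the Leibniz rule, exactly as in the proof of Proposition~\ref{vollaplacian}. The delicate cancellation is on the jump part: two of the three jump contributions (the one from differentiating $\partial_a\psi$ in one slot and the symmetric one) must combine, and the computation of the jump part of $D(df(\nabla\psi))$ done in Proposition~\ref{FormHessian} is precisely the tool that guarantees the jump parts match, since there the jump terms of $\partial_i(\partial_j\psi\,Y_j)-\partial_j\psi\,\partial_i Y_j$ were shown to equal the jump part of $Y_j\,\partial_i\partial_j\psi$. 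I would therefore lean on Proposition~\ref{FormHessian} applied with $X=Y=\nabla\psi$ to handle the jump part cleanly, reducing the remaining verification to the absolutely continuous and Cantor parts, where the standard Leibniz rule $D^i(uv)=uD^iv+vD^iu$ holds and the classical smooth computation goes through verbatim.
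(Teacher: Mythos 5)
Your route is correct in substance but genuinely different from the paper's, and, once one slip is repaired, it is arguably cleaner. The paper proves the identity by a self-contained chart computation: it splits $D_{\nabla\psi}\nabla\psi$ into a second-derivative part $A$ and a metric part $B$, computes $g(B,\frac{\partial}{\partial x_i})$ by expanding the Christoffel symbols and using $\sum_l g_{li}g^{lt}=\delta_{it}$, reduces the claim to comparing $\sum_{k,s}g^{ks}\frac{\partial \psi}{\partial x_s}\frac{\partial^2\psi}{\partial x_k\partial x_i}$ with $\frac12\sum_{k,s}g^{ks}\frac{\partial}{\partial x_i}\big(\frac{\partial \psi}{\partial x_s}\frac{\partial \psi}{\partial x_k}\big)$, and then redoes \emph{from scratch} the three-case jump analysis (on $J_{\frac{\partial\psi}{\partial x_k}}\cap J_{\frac{\partial\psi}{\partial x_s}}$, on the differences, etc.), exploiting the symmetry in $(k,s)$ and the precise-representative convention. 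You instead organize the algebra around the lowered identity $\sum_k g_{mk}(D_X\nabla\psi)^k=\hes\,\psi(X,\frac{\partial}{\partial x_m})$ and outsource the jump cancellation to Proposition~\ref{FormHessian}. Both moves are legitimate: the lowering computation only ever differentiates products in which one factor is a metric coefficient in $\GBV_0$, so Lemma~\ref{lemmaproduct} applies throughout (this step deserves to be written out; it is a computation, not a citation of Definition~\ref{hessian}), and there is no circularity, since the proof of Proposition~\ref{FormHessian} does not use the present proposition. The gain is that the only genuinely delicate point, the jump part of the derivative of $\frac{\partial \psi}{\partial x_s}\frac{\partial \psi}{\partial x_k}$ where Lemma~\ref{lemmaproduct} fails, is proved once instead of twice.

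The slip: ``Proposition~\ref{FormHessian} applied with $X=Y=\nabla\psi$'' does not do what you want. That instantiation yields only the scalar identity $\hes\,\psi(\nabla\psi,\nabla\psi)=D(|\nabla\psi|_g^2)(\nabla\psi)-d\psi(D_{\nabla\psi}\nabla\psi)$, i.e.\ the contraction of the desired vector identity against $\nabla\psi$; wherever the precise representative of $\nabla\psi$ is tangential to the jump set (already for $\psi=|x_1|+x_2$ in flat coordinates the precise representative along $\{x_1=0\}$ is $(0,1)$), the contraction annihilates exactly the jump information you need, so the per-component identity cannot be recovered from it. The correct instantiation is $X=\frac{\partial}{\partial x_m}$, $Y=\nabla\psi$ for each $m$ (coordinate fields are admissible $\Cwo\subset\GBV$ vector fields on the chart domain); this matches the per-index jump identity from the proof of Proposition~\ref{FormHessian} that you quote, where the index $i$ is free. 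With this choice, Proposition~\ref{FormHessian} gives $\hes\,\psi(\frac{\partial}{\partial x_m},\nabla\psi)=\frac{\partial}{\partial x_m}(|\nabla\psi|_g^2)-d\psi(D_{\frac{\partial}{\partial x_m}}\nabla\psi)$, and your lowering identity with $X=\frac{\partial}{\partial x_m}$, combined with $\frac{\partial\psi}{\partial x_k}=\sum_l g_{kl}(\nabla\psi)^l$ out of an $\H^{N-1}$-negligible set (which suffices, since all measures involved vanish on such sets, cf.\ Remark~\ref{evalvf}), gives $d\psi(D_{\frac{\partial}{\partial x_m}}\nabla\psi)=\hes\,\psi(\frac{\partial}{\partial x_m},\nabla\psi)$. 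Hence $2\,\hes\,\psi(\frac{\partial}{\partial x_m},\nabla\psi)=\frac{\partial}{\partial x_m}(|\nabla\psi|_g^2)$; combining with $g(D_{\nabla\psi}\nabla\psi,\frac{\partial}{\partial x_m})=\hes\,\psi(\nabla\psi,\frac{\partial}{\partial x_m})$, the symmetry of the Hessian components, and raising the index $m$, closes the proof. Packaged this way you never need to split the measures into diffuse and jump parts at all.
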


\begin{proof} 
It suffices to prove the result when the objects are read in a chart $(U,\phi)$. However for simplicity, we drop $\circ\, \phi^{-1}$ and simply write $f$ and $\psi$ in the computations below.
Let us recall that in a chart, the covariant derivative of a vector field $Y$ with $\GBV$ components is defined as
$$ DY = \sum_{i,k} (DY)_{i,k} \, dx^i \otimes \frac{\partial}{\partial x_k}$$
where 
\begin{equation} (DY)_{i,k} = \frac{\partial Y_k}{\partial x_{i}}   +  \sum_{s=1}^N  Y_s \,\displaystyle \Gamma_{is}^k
\end{equation} 
Decomposing $\nabla \psi$ in the standard basis, we get
$$ \nabla \psi = \sum_{1\leq k,s\leq N} g^{ks} \frac{\partial \psi}{\partial x_s} \frac{\partial }{\partial x_k}.$$
This leads to
\begin{eqnarray*}
D_{\nabla \psi} \nabla \psi &=& \sum_{1\leq k,s\leq N} g^{ks} \frac{\partial \psi}{\partial x_s} D_{\frac{\partial}{\partial x_k}} \nabla \psi \\
&=& \sum_{1\leq k,s,l \leq N} g^{ks} \frac{\partial \psi}{\partial x_s} \left[  \frac{\partial }{\partial x_k} \left( \sum_{1\leq t\leq N} g^{lt}\frac{\partial \psi}{\partial x_t} \right)\frac{\partial }{\partial x_l}\right. \\
& & \;+ \left. \sum_{1\leq t \leq N} g^{lt} \frac{\partial \psi}{\partial x_t} D_{\frac{\partial}{\partial x_k}} {\textstyle \frac{\partial}{ \partial x_l}}\right] 
\end{eqnarray*}

Now, if $f,\,g \in \GBV(\Omega)$ are such that at least one of the two functions $f,g$ belongs to $\GBV_0(\Omega)$, then for any $i \in \{1,\cdots,N\}$, it holds
$$ \frac{\partial fg}{\partial x_i} = f \frac{\partial g}{\partial x_i} + g \frac{\partial f }{\partial x_i}.$$ 

This yields
 \begin{equation}\label{eqIPPHessian2} 
D_{\nabla \psi} \nabla \psi = A+ B,
 \end{equation}
 where
 $$ A=  \sum_{1\leq k,s,l,t\leq N} g^{ks}g^{lt}  \frac{\partial \psi}{\partial x_s}\frac{\partial^2 \psi}{\partial x_k\partial x_t} \frac{\partial}{\partial x_l}$$
 $$ B =  \sum_{1\leq k,s,l,t\leq N} g^{ks}\frac{\partial \psi}{\partial x_s}\frac{\partial \psi}{\partial x_t} \left( \frac{\partial g^{lt}}{\partial x_k} \frac{\partial }{\partial x_l}+ g^{lt} D_{\frac{\partial}{\partial x_k}} {\textstyle \frac{\partial}{ \partial x_l}}\right). $$
 
Now, we compute $  g(D_{\nabla \psi} \nabla \psi,\frac{\partial}{\partial x_i})$ for a fixed $i \in \{1,\cdots,N\}$. We notice that
$$ g(B,{\textstyle\frac{\partial}{\partial x_i}})=  \sum_{1\leq k,s,l,t\leq N} g^{ks}\frac{\partial \psi}{\partial x_s}\frac{\partial \psi}{\partial x_t}  C_{l,t,i}$$
where 
$$ C_{l,t,i}= \frac{\partial g^{lt}}{\partial x_k} g_{li} + g^{lt} \sum_{1\leq u\leq N} \Gamma_{kl}^u \,g_{ui}.$$
Using that 
\begin{equation}\label{BVtrick}
\sum_{1\leq l \leq N} g_{li}g^{lt}= \delta_{it}
\end{equation}
hence a constant function (here $\delta$ stands for the Kronecker symbol) whose derivative is null, we can rewrite
$$ \sum_{1\leq l\leq N} C_{l,t,i}= \sum_{1\leq l\leq N} \Big(-\frac{\partial g_{li}}{\partial x_k} g^{lt}+ g^{lt} \sum_{1\leq u\leq N} \Gamma_{kl}^u \,g_{ui}\Big).$$
Now, since $ \Gamma_{kl}^u= \frac 12 \Big( \sum_{1\leq \theta\leq N} g^{\theta u} \big(\frac{\partial g_{l\theta}}{\partial x_k} +\frac{\partial g_{k\theta}}{\partial x_l} -\frac{\partial g_{kl}}{\partial x_{\theta}}\big)\Big) $, we get by expanding $\sum_{1\leq u\leq N} \Gamma_{kl}^u \,g_{ui}$ 
$$ \sum_{1\leq l\leq N} C_{l,t,i}= \sum_{1\leq l\leq N}  
\frac 12 \,g^{lt}\Big(\frac{\partial g_{ki}}{\partial x_l} - \frac{\partial g_{li}}{\partial x_k} - \frac{\partial g_{kl}}{\partial x_i}\Big).$$

We finally obtain
\begin{eqnarray*}
g(B,{\textstyle\frac{\partial}{\partial x_i}})  &=&   
\sum_{1\leq k,s,l,t\leq N} \frac 12 \,g^{ks} g^{lt}\frac{\partial \psi}{\partial x_s}\frac{\partial \psi}{\partial x_t} \Big(\frac{\partial g_{ki}}{\partial x_l} - \frac{\partial g_{li}}{\partial x_k} - \frac{\partial g_{kl}}{\partial x_i}\Big) \\
&=&  \sum_{1\leq k,s,l,t\leq N} - \frac 12 \,g^{ks} g^{lt}\frac{\partial \psi}{\partial x_s}\frac{\partial \psi}{\partial x_t} \frac{\partial g_{kl}}{\partial x_i}
\end{eqnarray*}
since the other terms cancel because of the symmetries. Using (\ref{BVtrick}) again yields
\begin{equation}\label{eqIPPHessian3}
g(B,{\textstyle\frac{\partial}{\partial x_i}})  = \sum_{1\leq t,s\leq N} \frac 12 \,\frac{\partial \psi}{\partial x_s}\frac{\partial \psi}{\partial x_t} \frac{\partial g^{ts}}{\partial x_i}.
\end{equation}

The next step is to compute $\frac{1}{2}\, \frac{\partial |\nabla \psi|_g^2}{\partial x_i}$. Starting from the equality 
$$|\nabla \psi |_g^2= \sum_{1\leq t,s\leq N}  g^{ts}\frac{\partial \psi}{\partial x_s}\frac{\partial \psi}{\partial x_t}, $$
 we infer
\begin{equation}\label{eqIPPHessian1}
 {\textstyle \frac{1}{2}\, \frac{\partial |\nabla \psi|_g^2}{\partial x_i} = g(B,{\textstyle\frac{\partial}{\partial x_i}})  + \frac{1}{2} \sum_{1\leq t,s\leq N}  g^{ts} \frac{\partial}{\partial x_i}\Big(\frac{\partial \psi}{\partial x_s}\frac{\partial \psi}{\partial x_t}\Big) }.
 \end{equation}
 
 By comparing (\ref{eqIPPHessian1}) with (\ref{eqIPPHessian2}) and (\ref{eqIPPHessian3}), we get 
 \begin{equation}\label{eqIPPHessian4}
 g(D_{\nabla \psi} \nabla \psi,{\textstyle\frac{\partial}{\partial x_i}}) -  {\textstyle \frac{1}{2}\, \frac{\partial |\nabla \psi|_g^2}{\partial x_i}= 
\sum_{1\leq k,s,\leq N} g^{ks} \frac{\partial \psi}{\partial x_s}\frac{\partial^2 \psi}{\partial x_k\partial x_i} -\frac{1}{2} \sum_{1\leq k,s\leq N}  g^{ks} \frac{\partial}{\partial x_i}\Big(\frac{\partial \psi}{\partial x_s}\frac{\partial \psi}{\partial x_k}\Big) }
\end{equation} 
 where we use (\ref{BVtrick}) to simplify the first term on the right-hand side. Now, if $\psi$ were a $DC_0$ function then it would be no jump part in its second derivatives and we would get 
 $$  \frac{\partial}{\partial x_i}\Big(\frac{\partial \psi}{\partial x_s}\frac{\partial \psi}{\partial x_k}\Big)=   \frac{\partial^2 \psi}{\partial x_i\partial x_s}\frac{\partial \psi}{\partial x_k} + \frac{\partial^2 \psi}{\partial x_i\partial x_k}\frac{\partial \psi}{\partial x_s}. 
$$ 
  Since the second distributional derivative of a function is a symmetric matrix-valued Radon measure, the result is proved in this special case. But $\psi$ is not $DC_0$ in general, thus we have to compute the jump part of the term in the right-hand side of (\ref{eqIPPHessian4}) and prove that it vanishes. By symmetry of the second derivative, for any $t,s\in \{1,\cdots,N\}$, the jump part of $\frac{\partial^2 \psi}{\partial x_s\partial x_t}$ and $\frac{\partial^2 \psi}{\partial x_t\partial x_s}$ coincide. Let us also recall properties of the jump part ; in the following $\nabla^E \psi$ stands for the Euclidean gradient of $\psi$. First, for $\H^{N-1}$-a.e. point in $J_{\frac{\partial \psi}{\partial x_k}} \cap J_{\frac{\partial \psi}{\partial x_s}}$, $\nu_{\frac{\partial \psi}{\partial x_k}}= \nu_{\frac{\partial \psi}{\partial x_s}}= \nu_{\nabla^E \psi}$ and up to a $\H^{N-1}$ negligible set $J_{\nabla^E \psi}= \cup_{u=1}^N  J_{\frac{\partial \psi}{\partial x_u}}$. In what follows, we make the convention that $\frac{\partial \psi}{ \partial x_k}^+= \frac{\partial \psi}{ \partial x_k}^-=\frac{\partial \psi}{ \partial x_k}$ out of $J_{\frac{\partial \psi}{\partial x_k}}$, so that the precise representative satisfies
  $$  \frac{\partial \psi}{ \partial x_k}= \frac 12 \Big(\frac{\partial \psi}{ \partial x_k}^++ \frac{\partial \psi}{ \partial x_k}^-\Big)$$
everywhere out of a $\H^{N-1}$ negligible set. This convention allows us to write for any $k \in \{1,\cdots,N\}$,
  $$  \Big(\frac{\partial \psi}{\partial x_k}^+ -\frac{\partial \psi}{\partial x_k}^-\Big) \langle \nu_{\frac{\partial \psi}{\partial x_k} }, \textstyle{\frac{\partial }{\partial x_i}}\rangle
    \H^{N-1}\res_{ J_{\frac{\partial \psi}{ \partial x_i}}} = \Big(\frac{\partial \psi}{\partial x_k}^+ -\frac{\partial \psi}{\partial x_k}^-\Big) \langle \nu_{\nabla^E \psi }, \textstyle{\frac{\partial }{\partial x_i}}\rangle   \H^{N-1}\res_{ J_{\nabla^E \psi}}. $$

Using these properties, we can now compare the jump parts of the derivatives. Starting with     
  \begin{eqnarray*}
  \sum_{1\leq k,s,\leq N} g^{ks} \frac{\partial \psi}{\partial x_s}\frac{\partial^2 \psi}{\partial x_k\partial x_i} & = & \sum_{1\leq k,s,\leq N} g^{ks} \frac{\partial \psi}{\partial x_s}\frac{\partial^2 \psi}{\partial x_i\partial x_k} \\
  														&= &  \sum_{1\leq k,s,\leq N} \frac{g^{ks}}{2} \Big(\frac{\partial \psi}{\partial x_s}^+ + \frac{\partial \psi}{\partial x_s}^-\Big) \Big(\frac{\partial \psi}{\partial x_k}^+ - \frac{\partial \psi}{\partial x_k}^-\Big) \langle \nu_{\frac{\partial \psi}{\partial x_k}, \frac{\partial}{\partial x_i}}\rangle \H^{N-1}\res_{J_{\frac{\partial \psi}{\partial x_k}}}\\
  														&=& \Big(\sum_{1\leq k,s,\leq N} \frac{g^{ks}}{2} \Big(\frac{\partial \psi}{\partial x_s}^+ + \frac{\partial \psi}{\partial x_s}^-\Big) \Big(\frac{\partial \psi}{\partial x_k}^+ - \frac{\partial \psi}{\partial x_k}^-\Big)\Big)  \langle \nu_{\nabla^E \psi, \frac{\partial}{\partial x_i}}\rangle\H^{N-1}\res_{ J_{\nabla^E \psi}}\\
													&=& \Big(\sum_{1\leq k,s,\leq N} \frac{g^{ks}}{2} \Big(\frac{\partial \psi}{\partial x_s}^+\frac{\partial \psi}{\partial x_k}^+ - \frac{\partial \psi}{\partial x_s}^-\frac{\partial \psi}{\partial x_k}^-\Big)\Big)  \langle \nu_{\nabla^E \psi, \frac{\partial}{\partial x_i}}\rangle \H^{N-1}\res_{J_{\nabla^E \psi}}.
													\end{eqnarray*}
  
  We now consider $\frac{1}{2} \sum_{1\leq k,s\leq N}  g^{ks} \frac{\partial}{\partial x_i}\Big(\frac{\partial \psi}{\partial x_s}\frac{\partial \psi}{\partial x_k}\Big) $. 
  $$ \begin{array}{lrl}
  \mbox{On }   J_{\frac{\partial \psi}{\partial x_k}} \cap J_{\frac{\partial \psi}{\partial x_s}}, &  \frac{\partial}{\partial x_i}\Big(\frac{\partial \psi}{\partial x_s}\frac{\partial \psi}{\partial x_k}\Big) &=\Big(\frac{\partial \psi}{\partial x_s}^+\frac{\partial \psi}{\partial x_k}^+ - \frac{\partial \psi}{\partial x_s}^-\frac{\partial \psi}{\partial x_k}^-\Big)  \langle \nu_{\nabla^E \psi, \frac{\partial}{\partial x_i}}\rangle\H^{N-1}\res_{ J_{\nabla^E \psi}}\\ 																																		 \mbox{On }   J_{\frac{\partial \psi}{\partial x_k}} \setminus J_{\frac{\partial \psi}{\partial x_s}}, &	 \frac{\partial}{\partial x_i}\Big(\frac{\partial \psi}{\partial x_s}\frac{\partial \psi}{\partial x_k}\Big) &=		\frac{\partial \psi}{\partial x_s}\Big(\frac{\partial \psi}{\partial x_k}^+ - \frac{\partial \psi}{\partial x_k}^-\Big)  \langle \nu_{\nabla^E \psi, \frac{\partial}{\partial x_i}}\rangle \H^{N-1}\res_{J_{\nabla^E \psi}}\\
  &	 &=		\frac{1}{2}\Big(\frac{\partial \psi}{\partial x_s}^+ + \frac{\partial \psi}{\partial x_s}^-\Big) \Big(\frac{\partial \psi}{\partial x_k}^+ - \frac{\partial \psi}{\partial x_k}^-\Big) \langle \nu_{\nabla^E \psi, \frac{\partial}{\partial x_i}}\rangle \H^{N-1}\res_{J_{\nabla^E \psi}	}\\			
  	 \mbox{On }    J_{\frac{\partial \psi}{\partial x_s}} \setminus J_{\frac{\partial \psi}{\partial x_k}}, &	 \frac{\partial}{\partial x_i}\Big(\frac{\partial \psi}{\partial x_s}\frac{\partial \psi}{\partial x_k}\Big) &=		\frac{\partial \psi}{\partial x_k}\Big(\frac{\partial \psi}{\partial x_s}^+ - \frac{\partial \psi}{\partial x_s}^-\Big)  \langle \nu_{\nabla^E \psi, \frac{\partial}{\partial x_i}}\rangle \H^{N-1}\res_{J_{\nabla^E \psi}}\\
  &	 &=		\frac{1}{2}\Big(\frac{\partial \psi}{\partial x_k}^+ + \frac{\partial \psi}{\partial x_k}^-\Big) \Big(\frac{\partial \psi}{\partial x_s}^+ - \frac{\partial \psi}{\partial x_s}^-\Big) \langle \nu_{\nabla^E \psi, \frac{\partial}{\partial x_i}}\rangle \H^{N-1}\res_{J_{\nabla^E \psi}}													 	
  					\end{array}$$
    
Consequently, we get thanks to the symmetry with respect to $k$ and $s$,
$$\frac{1}{2} \sum_{1\leq k,s\leq N}  g^{ks} \frac{\partial}{\partial x_i}\Big(\frac{\partial \psi}{\partial x_s}\frac{\partial \psi}{\partial x_k}\Big) =  \Big(\sum_{1\leq k,s,\leq N} \frac{g^{ks}}{2} \Big(\frac{\partial \psi}{\partial x_s}^+\frac{\partial \psi}{\partial x_k}^+ - \frac{\partial \psi}{\partial x_s}^-\frac{\partial \psi}{\partial x_k}^-\Big)\Big)  \langle \nu_{\nabla^E \psi, \frac{\partial}{\partial x_i}}\rangle J_{\nabla^E \psi}.$$
  
The equality is proved. 
\end{proof}

With the above result at our disposal, we can now establish the integration by parts formula involving the Hessian.

\begin{prop}[Integration by parts formula]\label{gamma2} Let $v$ (respectively $u$) be a $DC$ (resp. $DC_0$) function defined on an open subset $\Omega$ of $X^*$. Then, for any compactly supported Lipschitz function $\psi$ defined on $\Omega$, the following equality holds
$$ \int_\Omega \psi\, \hes \, v (\nabla u,\nabla u)  = - \int_\Omega \psi\, g(\nabla v,\nabla u) \Delta^g u   -\frac{1}{2} \int_\Omega \psi\, g(\nabla v, \nabla  |\nabla u|_g^2)  - \int_\Omega g(\nabla v, \nabla u) g(\nabla u, \nabla \psi)\,dv_g .$$
\end{prop}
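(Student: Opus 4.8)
The plan is to combine the three structural facts already at hand---the relation between Hessian and covariant derivative (Proposition~\ref{FormHessian}), the geodesic-type identity (Proposition~\ref{geod}), and the divergence form of the Laplacian (Proposition~\ref{vollaplacian})---with one careful integration by parts. First I would apply Proposition~\ref{FormHessian} with the $DC$ function $v$ and the vector fields $X=Y=\nabla u$; this is legitimate because $u$ is $DC_0$, so $\nabla u$ is a genuine $\GBV_0$ vector field. This yields the chartwise equality of Radon measures
$$\hes v(\nabla u,\nabla u)=D\big(dv(\nabla u)\big)(\nabla u)-dv\big(D_{\nabla u}\nabla u\big),$$
and I set $h:=dv(\nabla u)=g(\nabla v,\nabla u)\in\GBV(\Omega)$, a bounded $\GBV$ function (with a possible jump part, since $v$ is only $DC$).

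For the second term I would invoke Proposition~\ref{geod} applied to $u$, namely $D_{\nabla u}\nabla u=\frac12\nabla(|\nabla u|_g^2)$, together with the elementary identity $dv(Z)=g(\nabla v,Z)$ for a $\GM$ vector field $Z$ (evaluation being licensed by Remark~\ref{evalvf}). After multiplying by $\psi$ and integrating, this term reproduces exactly $-\frac12\int_\Omega\psi\,g(\nabla v,\nabla|\nabla u|_g^2)$.

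The heart of the argument is the first term $\int_\Omega\psi\,dh(\nabla u)$. Working in charts through a $DC_0$ partition of unity $\sum_\alpha\theta_\alpha=1$ as in Proposition~\ref{IBP}, in each chart this reads $\int(\psi\theta_\alpha)\sqrt{\det G}\sum_{k,l}g^{kl}(\partial_l u)\,d(\partial_k h)$. I would integrate by parts against the measure $\partial_k h$ by pairing it with $\Phi_k:=(\psi\theta_\alpha)\sqrt{\det G}\sum_l g^{kl}\partial_l u$, which is compactly supported and lies in $\GBV_0$ (here I use that $\psi\theta_\alpha$ is Lipschitz, that $\sqrt{\det G}$ and the $g^{kl}$ are $\GBV_0$ by Lemma~\ref{g-1}, and that $\partial_l u\in\Cwo$). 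Since $\Phi_k\in\GBV_0$ while $h$ is merely $\GBV$, Lemma~\ref{lemmaproduct} grants the product rule $\partial_k(h\Phi_k)=h\,\partial_k\Phi_k+\Phi_k\,\partial_k h$; integrating the derivative of the compactly supported function $h\Phi_k$ to zero gives $\int\Phi_k\,d(\partial_k h)=-\int h\,d(\partial_k\Phi_k)$. Expanding $\sum_k\partial_k\Phi_k$ and recognizing $\sum_{k,l}\partial_k\big(g^{kl}\sqrt{\det G}\,\partial_l u\big)=\sqrt{\det G}\,\Delta_\phi u$ via Proposition~\ref{vollaplacian}, the expression splits into one piece carrying $\Delta^g u$ and one carrying $g(\nabla(\psi\theta_\alpha),\nabla u)$. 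Summing over $\alpha$ and using $\sum_\alpha\nabla\theta_\alpha=0$ to restore $\nabla\psi$ then produces $-\int_\Omega\psi\,g(\nabla v,\nabla u)\Delta^g u-\int_\Omega g(\nabla v,\nabla u)\,g(\nabla\psi,\nabla u)\,dv_g$, which are the remaining two terms; adding the second term from the previous paragraph gives the claimed formula.

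The main obstacle is exactly this integration by parts: because $v$ is only $DC$, the function $h=g(\nabla v,\nabla u)$ genuinely carries a jump part, and the naive product and chain rules fail for $BV$ functions with jumps. What rescues the computation is that the $DC_0$ hypothesis on $u$ forces every factor paired with $h$---namely $\sqrt{\det G}$, the $g^{kl}$, and the components of $\nabla u$---to lie in $\GBV_0$, so that in each product at most one factor jumps and Lemma~\ref{lemmaproduct} applies; this is precisely the cancellation phenomenon announced in the introduction, and it is why the asymmetry between $v$ and $u$ cannot be dropped. I would be careful to work throughout with precise representatives and to note that $\Delta^g u\in\GM_0$ (again from $u$ being $DC_0$), so that all the pairings with the bounded $\GBV$ function $h$ are well defined.
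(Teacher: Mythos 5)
Your proof is correct, and although it shares the paper's overall skeleton, it handles the crucial analytic step by a genuinely different argument. Both proofs reduce, via Proposition~\ref{FormHessian} applied with $X=Y=\nabla u$ and Proposition~\ref{geod}, to the integration by parts identity
$$
\int_\Omega \psi\,h\,\Delta^g u \;=\; -\int_\Omega g\bigl(\nabla u,\nabla (h\psi)\bigr),\qquad h:=g(\nabla v,\nabla u),
$$
which is \eqref{IPP2} in the paper (the paper establishes this identity first and then invokes Propositions~\ref{FormHessian} and~\ref{geod}, whereas you invoke them first; the order is immaterial). The difference lies in how this identity is proved. The paper mollifies: it writes $\int\psi h\,\Delta^g u=\lim_{\ep}\int\psi\,(h*\rho_\ep)\,\Delta^g u$ (dominated convergence, using $\Delta^g u\in\GM_0$), applies the already-established Proposition~\ref{IBP} to the Lipschitz test functions $\psi\,(h*\rho_\ep)$, and passes to the limit in $\int\psi\,g(\nabla(h*\rho_\ep),\nabla u)\,dv_g$ by means of the approximation result, Proposition~\ref{teclemma}, whose hypotheses ($\Cwo$ coefficients paired against $\GM$ measures) encode exactly the $DC$/$DC_0$ asymmetry you isolate at the end of your argument. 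You instead integrate by parts directly at the level of $BV$ functions and measures: chartwise you pair $\partial_k h$ with the compactly supported $\GBV_0$ weight $\Phi_k$, invoke Lemma~\ref{lemmaproduct} for the product rule (legitimate because at most one factor, namely $h$, carries a jump), use that the total derivative of the compactly supported $BV$ function $h\Phi_k$ integrates to zero, and reassemble the divergence-form expression of the Laplacian via Proposition~\ref{vollaplacian}. In effect you re-prove Proposition~\ref{IBP} with $\GBV_0$-weighted, rather than Lipschitz, test data, which is precisely what is needed, so mollification and Proposition~\ref{teclemma} never enter. What the paper's route buys is economy: it recycles Proposition~\ref{IBP} verbatim and confines all jump-interaction issues to a single approximation lemma. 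What your route buys is a self-contained, approximation-free argument that makes the cancellation mechanism (every factor multiplying $h$ lies in $\GBV_0$, and $\Delta^g u\in\GM_0$) completely explicit. Your closing remarks are on target: the pairing $\int\psi h\,\Delta^g u$ is well defined only because $\Delta^g u$ kills $\H^{N-1}$ $\sigma$-finite sets while $h$ is merely $\Cw$, and this is exactly where a symmetric hypothesis on $u$ and $v$ would break down.
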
 

\begin{proof}
The claim follows from the integration by parts formula
\begin{equation}\label{IPP2}
\int_{\Omega}  \psi\, g(\nabla v,\nabla u)  \Delta^g u = - \int_{\Omega} g(\nabla u, \nabla (g(\nabla v,\nabla u) \psi ))
\end{equation}
proved below. Let us first explain how to complete the proof from this equality. First, notice that $ g(\nabla v,\nabla u)$ and $\psi$ are $\GBV$ functions on $\Omega$ and that $\psi$ has  no jump part in its derivative. Therefore according to Lemma \ref{lemmaproduct}, the Leibniz rule holds for these functions:
$$ \nabla (g(\nabla v,\nabla u) \psi )=\psi  \nabla (g(\nabla v,\nabla u)) +  g(\nabla v,\nabla u) \nabla \psi \H^N.$$
This yields
\begin{equation}\label{IPP3} 
 \int_{\Omega}   \psi\, g(\nabla v,\nabla u)  \Delta^g u= -  \int_{\Omega} \psi \,g(\nabla u, \nabla (g(\nabla v,\nabla u)) - \int_{\Omega} g(\nabla v,\nabla u)g(\nabla u,\nabla \psi)\,d\H^N.
\end{equation}
Now we can rewrite the first term on the right-hand side as
\begin{eqnarray}\label{IPP4}
 \int_{\Omega} \psi \, g(\nabla u, \nabla (g(\nabla v,\nabla u))&= &\int_{\Omega} \psi \,D (dv (\nabla u)) (\nabla u) \nonumber  \\
 																								&=&\int_{\Omega} \psi \, \hes v (\nabla u,\nabla u)+ \int_{\Omega} \psi  \,g(\nabla v, D_{\nabla u} \nabla u).\nonumber\\
 																								&=& \int_{\Omega} \psi \, \hes v (\nabla u,\nabla u)+\frac{1}{2}\int_{\Omega} \psi  \,g(\nabla v,\nabla |\nabla u|_g^2)
\end{eqnarray} 
where we used Proposition~\ref{FormHessian} to get the second equality, and Proposition \ref{geod} to get the last one. Inserting (\ref{IPP4}) into (\ref{IPP3}) gives the result.

It remains to prove (\ref{IPP2}). Reasoning as in the proof of Proposition 5.9, we can further assume that $\psi$ is supported in the domain of a chart, thus it suffices to prove the result in local coordinates defined on an open subset of $\R^N$ that we also called $\Omega$ for simplicity.  Let $(\rho_{\ep})_{\ep>0}$ be a family of standard radial mollifiers. Let us set $h=g(\nabla v,\nabla u) $ and note that $h\in C_{w}(\Omega)$. Therefore, $h*\rho_{\ep}\rightarrow h$ pointwise out of a $\sigma$-finite set with respect to $\H^{N-1}$. The Lebesgue dominated convergence theorem then yields
\begin{eqnarray}
\int_{\Omega} \psi\, g(\nabla v,\nabla u)  \Delta^g u &=& \lim_{\ep \downarrow 0} \int_{\Omega} \psi\, h*\rho_{\ep} \, \Delta^g u \nonumber \\
																				&=& \lim_{\ep \downarrow 0} \left( \int_{\Omega} \psi\,g(\nabla  h*\rho_{\ep}, \nabla  u)dv_g + \int_{\Omega} h*\rho_{\ep} \,g(\nabla \psi , \nabla u )dv_g\right)
\end{eqnarray}
Since $|g(\nabla \psi,\nabla u)|$ is compactly supported and bounded, it is clear that
$$ \lim_{\ep \downarrow 0}  \int_{\Omega} h*\rho_{\ep} \,g(\nabla \psi , \nabla u )dv_g = \int_{\Omega}  g(\nabla v,\nabla u)\,g(\nabla \psi , \nabla u )dv_g.$$
We prove the convergence of the remaining integral thanks to Proposition \ref{teclemma}. Let us check that the hypotheses of Proposition \ref{teclemma} are satisfied. Using the coordinate system $(x_i)_{1\leq i\leq N}$, we get
$$ g(\nabla  h*\rho_{\ep}, \nabla  u) =\sum_{i=1}^N g^{ij} \frac{\partial u}{\partial x_i} \psi \,\frac{\partial (h*\rho_{\ep})}{\partial x_j}.$$
By assumption, $g^{ij} \frac{\partial u}{\partial x_i} \psi  \in C_{w,0}(\Omega)$ while $\frac{\partial h}{\partial x_j} \in \GM(\Omega)$. Moreover the convergence $\frac{\partial (h*\rho_{\ep})}{\partial x_j} \rightharpoonup \frac{\partial h}{\partial x_j}$ in the duality with $C_c(\Omega)$  and (\ref{eq:stimamax}) hold (see for instance \cite[Theorem 2.2]{AFP}). Therefore
 $$\lim_{\ep \downarrow 0} \int_{\Omega} \psi\,g(\nabla  h*\rho_{\ep}, \nabla  u) = \int_{\Omega} \psi\,g(\nabla  h, \nabla  u)$$
 and the proof of (\ref{IPP2}) is complete. 
\end{proof}

\section{Alexandrov spaces}

In this part we prove that our results apply to an open dense subset $X^*$ of a finite dimensional Alexandrov space $(X,d)$ with curvature bounded from below. For an introduction to these spaces we refer to the book \cite{bbi}, see also \cite{AB} where all the notions below are discussed with further details.

\subsection{The quasiregular set of an Alexandrov space is a DC Riemannian manifold}
\begin{defi}[Quasiregular set $X^*$]Let $(X,d)$ be an $N$-dimensional Alexandrov space with curvature bounded below by $k$. Given $\delta>0$, a point $x$ is $\delta$-regular if there exist 
$N$ pairs of points $(p_1,q_1),\cdots,(p_N,q_N)$ such that 
$$
\begin{cases}
 \tilde{\angle} p_ixp_j > \frac{\pi}{2} -\delta & \text{ for all $i \neq j$,}\\
 \tilde{\angle}  p_i xq_i > \pi -\delta             &  \text{ for all $i$}
\end{cases}
$$
where $\tilde{\angle} xyz$ stands for the angle at $y$ of a comparison triangle in the space form of curvature $k$. The collection of pairs $(p_1,q_1)\cdots (p_N,q_N)$ is called a  $\delta$-strainer (at $x$). By lower semicontinuity of angles  with respect to $x$, the set of $\delta$-regular points is open. 
We shall denote by $X^*$ the set of quasiregular points, namely the set of $\delta_N$-regular points for a fixed $\delta_N \ll 1/N$, omitting the dependence
of $\delta_N$ for simplicity of notation.
\end{defi}
\begin{remark} It is proved in \cite{bbi} (see also the original paper \cite{bgp}) that $\delta_N = \frac{1}{100N}$ is a suitable choice. Namely, for such a choice, the open set $X^*$ is a dense subset of $X$ and a Lipschitz manifold. Note that 
it can be proved that these properties remain true for the set of $\delta$-regular points where $0<\delta \leq \delta_N$ (see \cite[Corollary 10.8.24]{bbi}). 
\end{remark}
\begin{defi}[$Reg(X)$ and $Sing(X)$] The set $Reg(X)$ of regular points is the set of points whose tangent cone is isometric to $N-$dimensional Euclidean space. 
Equivalently, it is the set of points which are $\delta$-regular for any $\delta>0$. The complement in $X$ of $Reg(X)$ is called singular set and denoted 
by $Sing(X)$. 
\end{defi} 
 
 Later, it was proved that actually $X^*$ is a $DC_0$ Riemannian manifold according to our terminology.
\begin{thm}[]\label{diff}
Given $(X,d)$ a $N$-dimensional Alexandrov space with curvature bounded from below, its quasiregular set $X^*$ is a $DC_0$ Riemannian manifold with singular set 
$\Si= X^*\cap Sing(X)$.
 More precisely, there exist a Riemannian metric $g$ and locally biLipschitz charts $\tphi: U_{\tphi} \rightarrow \mathbb{R}^n$ defined by the formula
\[  \tphi= (\hd_{p_1},\cdots,\hd_{p_n})\] 
where   
\[\hd_{p_i}(x)= \fint_{B(p_i,\ep_i)} d_{z_i}(x) \,d\H^N (z_i)
,\] 
such that $X^*$ is a $DC_0$ Riemannian manifold. Moreover, the components $g_{ij}$ of the Riemannian metric when read in a chart,  belong to $\GBV_0$ and 
satisfy
\begin{equation}\label{rmetbounds}
\frac{1}{c}\|p\|_2^2\leq \sum_{i,j} g_{ij}(x)p_ip_j\leq c\|p\|_2^2\quad\text{for all $p\in\R^N$, for $x\in\hat\phi(U'\setminus\Si)$},
\end{equation}
with $c=c(U')>0$ for all $U'\Subset U$. Last, the components $g_{ij}$ are differentiable Lebesgue almost everywhere.
\end{thm}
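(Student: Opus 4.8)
The plan is to reduce the statement to the structure theory of Alexandrov spaces already established in \cite{bgp,bbi,OS,DC}, and then to upgrade the conclusions from the $DC$/$BV$ level to the $DC_0$/$\GBV_0$ level using the elementary observations on the classes $\Cwo$ and $\GBV_0$ collected in Section~\ref{BVDC}. Concretely, I would first recall the following facts. For each quasiregular point one can choose a $\delta_N$-strainer $(p_1,q_1),\dots,(p_N,q_N)$; the associated averaged distance map $\tphi=(\hd_{p_1},\dots,\hd_{p_N})$ is, on a small enough neighbourhood $U_{\tphi}$, a biLipschitz homeomorphism onto an open subset of $\R^N$ (\cite{OS}, \cite[Ch.~10]{bbi}), and the functions $\hd_{p_i}$ are $DC$ (this is the point of Perelman's averaging, \cite{DC}). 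By the stability of $DC$ under composition (\cite{hartman} and the stability theorem above), every transition map $F=\tphi_\beta\circ\tphi_\alpha^{-1}$ is then $DC$, and the metric components read in charts are $BV_{loc}$. The remaining, genuinely new content is to verify (i) that $\Si=X^*\cap Sing(X)$ is $\H^{N-1}$-negligible in charts, (ii) that the relevant gradients are continuous off $\Si$, and (iii) the two-sided bound \eqref{rmetbounds} together with the a.e.\ differentiability.

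For (i), I would invoke the structure theory of Alexandrov spaces: since quasiregular points admit a full $N$-strainer, boundary points are excluded from $X^*$, and the non-regular points of $X^*$ form a set of Hausdorff dimension at most $N-1$ which is in fact $\H^{N-1}$-negligible (\cite{bgp,OS}). As biLipschitz maps preserve $\H^{N-1}$-negligibility, $\tphi_\alpha(U_\alpha\cap\Si)$ is $\H^{N-1}$-negligible for every chart, which is the first requirement in Definition~\ref{def:DC0}(a).

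For (ii), the key point is that on the regular set $X^*\setminus\Si$ the tangent cones are Euclidean and vary continuously, so the gradients $\nabla\hd_{p_i}$ and the pairwise inner products $\langle\nabla\hd_{p_i},\nabla\hd_{p_j}\rangle$ (computed intrinsically in the tangent cones) depend continuously on the base point; here the averaging is again essential, as it removes the jump of $\nabla d_{p_i}$ along the cut locus of $p_i$. Read in any chart, each $\nabla\hd_{p_i}$ is therefore continuous off the $\H^{N-1}$-negligible set $\tphi_\alpha(U_\alpha\cap\Si)$ and locally bounded there (by the Lipschitz bound on $\hd_{p_i}$), hence $\nabla\hd_{p_i}\in(\Cwo)^N$ and $F\in DC_0$, completing Definition~\ref{def:DC0}(a). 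The metric is defined in the chart $\tphi_\alpha$ by taking $g^{ij}=\langle\nabla\hd_{p_i},\nabla\hd_{p_j}\rangle$ and inverting the resulting Gram matrix; these components are $BV_{loc}$ (\cite{DC}) and, by the continuity just described, lie in $\Cwo$. Since a function in $BV_{loc}\cap\Cwo$ has its approximate discontinuity set contained in an $\H^{N-1}$-negligible set, Theorem~\ref{decompo} forces its jump part $D^{ju}$ to vanish; thus $g_{ij}\in\GBV_0(\tphi_\alpha(U_\alpha),\tphi_\alpha(U_\alpha\cap\Si))$, as required in Definition~\ref{def:DC0}(b).

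Finally, the ellipticity bound \eqref{rmetbounds} is exactly the statement that $\tphi_\alpha$ is biLipschitz: the strainer conditions $\tangle p_ixp_j>\tfrac\pi2-\delta$ make the gradients $\nabla\hd_{p_i}$ almost orthonormal, so the Gram matrix $(g^{ij})$ has uniformly bounded norm and uniformly bounded inverse on any $U'\Subset U$ off $\Si$, which transfers to a two-sided bound for $(g_{ij})$. The a.e.\ differentiability of the $g_{ij}$ is part of the differentiable structure established by Otsu--Shioya \cite{OS}: it ultimately rests on Alexandrov's theorem \cite{Alex39}, by which the distance functions entering the construction admit a second-order Taylor expansion $\H^N$-a.e., so that the metric they determine is differentiable $\H^N$-a.e. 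I expect the main obstacle to be packaging the geometric input of steps (i) and (ii) — the sharp $\H^{N-1}$-negligibility of $\Si$ and the continuity of the intrinsic gradients and their inner products across $X^*\setminus\Si$ — since everything else is a formal consequence of the $\Cwo$/$\GBV_0$ calculus already developed; this is precisely the place where the hypotheses built into the notion of $DC_0$ manifold meet the hard facts of Alexandrov geometry.
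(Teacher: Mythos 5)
Your overall architecture coincides with the paper's: property (a) of Definition~\ref{def:DC0} and the $\Cwo$ regularity of the $g_{ij}$ are quoted from Otsu--Shioya \cite{OS}, the $DC_0$ character of the transition maps and the $BV$ regularity are quoted from Perelman \cite{DC}, and your observation that a function in $BV_{loc}\cap\Cwo$ whose discontinuity set lies in an $\H^{N-1}$-negligible set has vanishing jump part (hence lies in $\GBV_0$) is exactly how the paper assembles these citations. The one piece the paper does \emph{not} delegate to the literature --- and explicitly flags as ``a consequence of results proved in these papers but not properly stated as such'' --- is the two-sided bound \eqref{rmetbounds}, and this is precisely where your proposal has a genuine gap.

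You justify \eqref{rmetbounds} by asserting that the strainer condition makes the gradients $\nabla\hd_{p_i}$ ``almost orthonormal'', so that the Gram matrix $(g^{ij})$ and its inverse are uniformly bounded. But on an Alexandrov space $\xi_i(x)=\nabla\hd_{p_i}(x)$ is only the gradient of a semiconcave function in Petrunin's sense \cite{SCPet}: the directional derivative $u\mapsto f_i'(x,u)$ is in general a concave, positively homogeneous function satisfying only the one-sided inequality $f_i'(x,u)\le(\xi_i(x),u)_x$. Consequently, without further argument, neither the Gram matrix of the $\xi_i$ nor the strainer angles control the behaviour of $\tphi$ in \emph{all} directions $u\in K_x$, and the biLipschitz property of $\tphi$ (a statement about distances) cannot be converted into the infinitesimal bound \eqref{rmetbounds}. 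The crux, which your proposal never addresses, is to prove that at every regular point the directional derivatives are actually \emph{linear}, i.e. $f_i'(x,u)=(\xi_i(x),u)_x$ for all $u\in K_x$; this is what the paper establishes via the first variation formula for the averaged distance functions (this is where the averaging over $B(p_i,\ep_i)$ enters quantitatively, not merely to kill jumps of $\nabla d_{p_i}$), the density of geodesic directions in the space of directions $\Sigma_x(X)$, and the linearity of both sides in $u$ when $K_x$ is Euclidean. With this identity in hand, the conclusion is linear algebra: $\tphi'(x,\cdot)=((\xi_i(x),\cdot)_x)_{1\le i\le N}$ is a linear bijection satisfying $1/c\le\|\tphi'(x,u)\|_2\le c$ for unit $u$ by the biLipschitz estimate on $U'$, and \eqref{rmetbounds} follows by homogeneity. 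Note that your own route, if carried out, would need exactly the same lemma (to express $\xi_i(x)$ as an average of directions $-\uparrow_x^{z}$ before any angle comparison can be made), so the missing step is not a packaging issue but the actual mathematical content of the theorem beyond the cited literature. (A minor attribution slip: the a.e.\ differentiability of the $g_{ij}$ is taken from Perelman \cite{DC}, not from Otsu--Shioya.)
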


\begin{proof} Property (a) of the definition of $DC_0$ Riemannian manifold was proved by Otsu and Shioya \cite{OS} (with the exception of the $DC_0$ character of the transition maps which is due to Perelman \cite{DC}) as well as the existence of a Riemannian metric $g$ whose components $g_{ij}$ are in $\C_{w,0}$ out of $\Si=Sing(X)$. The BV character as well as the differentiability property of $g_{ij}$ was proved by Perelman in \cite{DC}. The estimate (\ref{rmetbounds}) is a consequence of results proved in these papers but is not properly stated as such. Let us give a proof of it for the sake of completeness. In the following $(\cdot,\cdot)_x$ stands for the inner product defined on the tangent cone $K_x$ (note that, strictly speaking, it is an inner product only when $x\in Reg(X)$ and in this case it coincides with $g$ at $x$) by the formula
$$ (u,v)_x= |u|\,|v| \cos \angle u,v.$$

Now, any $f_i:=\hd_{p_i}$ is a semiconcave function, thus it admits directional derivative (see \cite{SCPet}) along any tangent vector $u \in K_x$. Moreover, the directional derivative satisfies for all $x \in U_{\tphi}$ and $u\in K_x$
$$ f_i'(x,u) \leq (\xi_i(x),u)_x$$
where $\xi_i(x)$ is the ``gradient'' of $f_i$ at $x$ (see \cite{SCPet} for more details). Using the definition of $f_i$ as an average of distance functions, one can prove that 
$$ \text{for all } u \in K_x, \quad f_i'(x,u) = (\xi_i(x),u)_x$$
whenever $x \in Reg(X)$.

Indeed, given a unitary geodesic $\gamma$ starting at $x$, the first variation formula (and Lebesgue's dominated convergence theorem) yields
$$ f_i(\gamma(s)) = f_i(x) +s \fint_{B(p_i,\ep_i)} \cos \angle \gamma'(0), \uparrow_x^{p} d\H^N(p) + o_x(s)$$
where $\uparrow_x^{p}$ stands for the unit direction of the (unique for $\H^N$-a.e. $p \in B(p_i,\ep_i)$) geodesic from $x$ to $p$. See \cite{OS} for more details. Consequently, 
$$\fint_{B(p_i,\ep_i)} (\gamma'(0), \uparrow_x^{p})_x \,d\H^N(p) = \fint_{B(p_i,\ep_i)} \cos \angle \gamma'(0), \uparrow_x^{p} \,d\H^N(p) = f_i'(x, \gamma'(0)) \leq (\xi_i(x),\gamma'(0))_x$$
and, by density of the geodesic directions $\gamma'(0)$ in the space of directions $\Sigma_x(X)$ (the unit sphere of $K_x$), 
the above formula holds for any $u \in K_x$. Since the extreme terms are both linear with respect to $u$ when $x \in Reg(X)$, we do have equality in the inequality above whenever $x \in Reg(X)$.

Let now $U'\Subset U$. 
Using this equality for all $i \in \{1,\cdots,N\}$ and the fact that ${\tphi}$ is a locally biLipschitz map, namely the existence of $c=c(U')>0$ satisfying
$$ \frac{1}{c} \,d(x,y) \leq ||\tphi(x)-\tphi(y)||_2 \leq c \,d(x,y)\qquad\forall x,\,y\in U', $$
we get, for all $x \in Reg(X)\cap U'$, $u \in K_x$
  \begin{equation}\label{eq:metesti}
 \frac{1}{c} \leq ||((\xi_i(x),u)_x)_{1\leq i\leq N}||_2 \leq c,
 \end{equation}
since $\tphi'(x,u) = ((\xi_i(x),u)_x)_{1\leq i\leq N}$. We also infer from the above formula that $(\xi_i(x))_{1\leq i\leq N}$ forms a basis of $K_x$ whenever $x$ is a regular point. In particular for all $p \in \R^N$ we can find $u_p\in K_x$ such that
 $$ \tphi'(x,u_p) =p.$$
 Then, (\ref{eq:metesti}) yields
 $$   \frac{1}{c} \sqrt{(u_p,u_p)_x}\leq ||p||_2 \leq c\sqrt{(u_p,u_p)_x}$$
by homogeneity and the proof is complete. 
 \end{proof}


\subsection{Almost everywhere second order Taylor expansion for $DC$ functions}

In this part, following Perelman's ideas \cite{DC}, we fully extend to the setting of Alexandrov spaces the classical result of Alexandrov on the existence of second order Taylor expansion for a convex function at Lebesgue almost every point. To be more specific, our goal is to prove the following result.\par 

 \begin{prop}\label{Taylor}
Let $f$ be a $DC$ function defined on $\Omega\subset X^*$. Then, for $\H^N$-a.e. point $x$,

$$f(y) = f(x) + |xy| df(x)(\uparrow_x^y) +\frac{1}{2} |xy|^2\hes^{ac} f (\uparrow_x^y,\uparrow_x^y) + o(|xy|^2)$$
where $|xy|= d(x,y)$ and $\uparrow_x^y$ denotes the direction at $x$ of an arbitrary geodesic from $x$ to $y$.  
\end{prop}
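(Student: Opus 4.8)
The plan is to push the problem into a chart, where $f$ becomes a Euclidean $DC$ function to which Alexandrov's theorem applies, and then to reconcile the Euclidean expansion so obtained with the intrinsic one. I would fix a chart $(U_{\tphi},\tphi)$ as in Theorem~\ref{diff} and set $\tilde f=f\circ\tphi^{-1}$, a $DC$ function on the open set $\tphi(U_{\tphi})\subset\R^N$ whose coordinate entries are the $\tphi_i=\hd_{p_i}$. Since the second distributional derivative of a semiconcave function has absolutely continuous positive part, both $\tilde f$ and each $\hd_{p_i}$ admit a second order Taylor expansion at $\H^N$-a.e. point \cite{Alex39}; as $\tphi$ is biLipschitz, the good points $x=\tphi^{-1}(\xi)$ form a full-measure subset of $U_{\tphi}$. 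I would further restrict to the full-measure set of $x\in\mathrm{Reg}(X)$ at which, in addition, the components $g_{ij}$ are differentiable (Theorem~\ref{diff}) and the absolutely continuous parts of the Christoffel measures $\Gamma^k_{ij}$ admit Lebesgue densities. At such an $x$ the tangent cone $K_x$ is Euclidean and $d_x\tphi:K_x\to\R^N$ is the linear isomorphism computed in the proof of Theorem~\ref{diff}.

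The central step concerns the chart-image of a geodesic. Let $\gamma$ be unit-speed with $\gamma(0)=x$ and $\dot\gamma(0)=\uparrow_x^y$, and set $c(s)=\tphi(\gamma(s))$, so that $c_i(s)=\hd_{p_i}(\gamma(s))$. The first variation formula recalled in the proof of Theorem~\ref{diff} yields $\dot c(0)=v:=d_x\tphi(\uparrow_x^y)$, a vector of $g$-norm one. I would then prove the second order expansion
\[
 c(s)=\tphi(x)+s\,v+\tfrac{s^2}{2}\,w+o(s^2),\qquad w_k=-\sum_{i,j}\Gamma^k_{ij}(\xi)\,v_iv_j,
\]
i.e. that at $\H^N$-a.e. $x$ the chart-image of an intrinsic geodesic solves, to second order, the Euclidean geodesic equation written with the absolutely continuous Christoffel symbols of $g$.

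Granting this, I would compose the Euclidean Taylor expansion of $\tilde f$ at $\xi$ with that of $c$ to get
\[
 f(\gamma(s))=f(x)+s\,\langle\nabla\tilde f(\xi),v\rangle+\tfrac{s^2}{2}\big(\langle\nabla\tilde f(\xi),w\rangle+\langle\hes^{E}\tilde f(\xi)\,v,v\rangle\big)+o(s^2),
\]
where $\hes^{E}\tilde f$ is the Euclidean (absolutely continuous) Hessian. The first order term is $s\,df(x)(\uparrow_x^y)$ by definition of the intrinsic differential. Substituting $w_k=-\sum_{ij}\Gamma^k_{ij}v_iv_j$ and invoking the chart formula for the Hessian (Definition~\ref{hessian}),
\begin{align*}
 \langle\nabla\tilde f(\xi),w\rangle+\langle\hes^{E}\tilde f(\xi)\,v,v\rangle
 &=\sum_{i,j}v_iv_j\Big((\hes^{E}\tilde f)_{ij}-\sum_k\frac{\partial\tilde f}{\partial x_k}\Gamma^k_{ij}\Big)\\
 &=\sum_{i,j}(\hes^{ac}_\phi f)_{ij}\,v_iv_j=\hes^{ac}f(\uparrow_x^y,\uparrow_x^y),
\end{align*}
since $v$ consists of the chart components of $\uparrow_x^y$. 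Taking $y=\gamma(s)$ with $s=|xy|$ then gives the statement, with error $o(|xy|^2)$.

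The hard part will be the second order expansion of $c=\tphi\circ\gamma$: in this low-regularity setting the metric is only $\GBV_0$ and the $\Gamma^k_{ij}$ are absolutely continuous parts of measures, so the classical ODE theory of geodesics is unavailable. I expect one must combine the $\H^N$-a.e. differentiability of the $g_{ij}$ with the second order comparison properties of (averaged) distance functions under a lower curvature bound—following Perelman's second order analysis—and, crucially, control the remainder uniformly in the direction $\uparrow_x^y$ so that a single $o(|xy|^2)$ serves for all $y$ close to $x$.
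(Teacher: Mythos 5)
Your reduction is sound as algebra: granting the expansion $c(s)=\tphi(x)+sv+\tfrac{s^2}{2}w+o(s^2)$ with $w_k=-\sum_{i,j}\Gamma^k_{ij}(\xi)v_iv_j$, composing it with Alexandrov's Euclidean expansion of $\tilde f$ and using the chart formula of Definition~\ref{hessian} does produce the intrinsic statement (modulo checking that the a.e.\ points can be chosen so that the second derivatives of $\tilde f$ and the densities of $\Gamma^k_{ij}$ are approximately continuous and $|\hes^{s}f|(B(x,r))=o(r^N)$, which is also needed so that the a.c.\ parts genuinely govern the expansion). But the step you flag as ``the hard part'' is not a technical remainder: it \emph{is} the proposition, and your proposal gives no argument for it. In this setting geodesics are not solutions of any ODE, the metric is only $\GBV_0$, and the $\Gamma^k_{ij}$ are merely a.c.\ parts of measures, so there is no direct route from ``$g_{ij}$ differentiable at $\xi$'' to a second-order expansion of $\tphi\circ\gamma$ \emph{uniform in the direction} $\uparrow_x^y$; asserting it amounts to assuming a quantitative second-order straightness of geodesics that must be extracted from curvature comparison, not from $BV$ calculus.

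The paper's proof shows how this is actually done, and the route is different from what you sketch. First (Proposition~\ref{normal}) one builds normal coordinates $N$ at a good point $x$ by freezing the linearization $\tilde g_{ij}(\bar z)=g_{ij}(\bar x)+\sum_k(\bar z-\bar x)_k\partial_k g_{ij}(\bar x)$, which is a \emph{smooth} metric, and composing the chart with its smooth normal coordinate map; in these coordinates $g_{ij}(N(x))=\delta_{ij}$ and $\partial_k g_{ij}(N(x))=0$, so your $w$ vanishes and the needed statement reduces to: the chord $N(x)N(y)$ and the geodesic direction $\uparrow^{N(y)}_{N(x)}$ differ by an angle $o(|xy|)$ (formula~\eqref{angle2}). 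Second, this angle estimate is proved by comparison geometry, not analysis: property (d) of Proposition~\ref{normal} (comparison angles agree with Euclidean chart angles up to $o(|yz|)$), the quadruple condition giving $\angle\uparrow_x^y,\uparrow_x^z+\angle\uparrow_x^z,\uparrow_x^p+\angle\uparrow_x^p,\uparrow_x^y\le 2\pi$, and a dyadic midpoint contradiction argument resting on property (c). This is precisely the ``Perelman second order analysis'' you allude to, and without carrying it out (or an equivalent), your proposal remains an unproven reduction rather than a proof; note also that working in a general chart $\tphi$ instead of normal coordinates only adds the burden of transporting the statement through the (smooth) transition map, which is why the paper does not phrase the key step with Christoffel symbols at all.
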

\begin{remark}
We don't know whether this result holds on any $DC_0$ Riemannian manifold. The proof we give below follows the arguments given by Perelman in \cite[Proposition p7]{DC} with more details added. Indeed, this result is of local nature, so there is no need to have a well-defined global object $\hes$ to consider this question. 
However, as stated, it shows that our notion of $\hes$ gives geometric informations on the function.
\end{remark}

To prove the result above, we need first to introduce normal coordinates.

\subsubsection{Normal coordinates}

In this part, following Perelman's ideas, we establish some properties of normal coordinates around ``good'' points of Alexandrov space. We think these results could be useful in other contexts, so we state them independently of Proposition~\ref{Taylor}.\par 

\begin{remark}\label{goodp} Note that by definition of a $DC_0$ Riemannian manifold, if the metric components are differentiable at point $F(x)$, $x\in Reg(X)$, when read in a chart $F$  then this is also true in any other chart whose domain contains $x$. The same property holds for the second order Taylor expansion of any $DC$ function defined on an open subset of $X^*$. Consequently, it makes sense to say that the metric $g$ (or a mere function) is differentiable at a point $x \in Reg(X)$. According to Theorem \ref{diff}, $g$ is differentiable at $\H^N$-almost every point.
\end{remark}

\begin{prop}\label{normal} Consider a point $x\in Reg(X)$ where the metric is differentiable. Then, there exists a normal coordinate system $N$ defined on a neighborhood of $x$ and compatible with the $DC_0$ structure (i.e. it makes a $DC_0$ mapping when composed with the inverse of any chart). Namely, in the coordinate system $N$, we have the following properties:

\begin{enumerate}[(a)]
\item $|g_{ij}(N(y)) -\delta_{ij}| = o(|xy|)$ for all $y \in Reg(X)$;

\vspace{0.2cm}
\item $|\,|yz| -||N(y)N(z)||_2| = o(\max\{|xy|,|xz|\})|yz|$;
\vspace{0.2cm}
\item $\lim_{y \rightarrow x} \angle_E (\uparrow_{N(x)}^{N(y)} N(x)N(y)) = 0,$
where $\angle_E$ stands for the standard Euclidean angle and $ \uparrow_{N(x)}^{N(y)}$ stands for the image through $N$ of the direction $\uparrow_x^y$ of a geodesic from $x$ to $y$;
\vspace{0.2cm}
\item$ | \tilde{\angle } yxz - \angle_E N(y)N(x)N(z)| = o(|yz|)  $
when all the angles of $N(x)N(y)N(z)$ are bounded away from $0$. (The same estimate holds if we consider a comparison triangle in $\R^2$ and replace $\tilde{\angle } yxz$ by its Euclidean counterpart).
\end{enumerate}
\end{prop}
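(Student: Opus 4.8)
The plan is to obtain $N$ by correcting an arbitrary $DC_0$ chart with a fixed polynomial diffeomorphism, and then to read off the four estimates from the resulting almost-Euclidean behaviour of the metric together with the comparison geometry of $X$.

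\emph{Construction and property (a).} I would start from any chart $(U,\tphi)$ with $x\in U$. By Remark~\ref{goodp} the metric is differentiable at $\tphi(x)$, so its components have a genuine first-order Taylor expansion there and the pointwise Christoffel symbols $\Gamma_{ij}^k$ at $\tphi(x)$ are well defined. First I compose $\tphi$ with a linear map $L$ satisfying $\tra{L}\,G(\tphi(x))\,L=\mathrm{Id}$, so that $g_{ij}=\delta_{ij}$ at the centre and $dN_x$ becomes a linear isometry of $(K_x,(\cdot,\cdot)_x)\cong\R^N$ onto $\R^N$; then I compose with the quadratic diffeomorphism $z\mapsto z-\tfrac12\sum_{i,j,k}\Gamma_{ij}^k z_iz_j\,e_k$ (coefficients frozen at the centre), whose role is precisely to make the first derivatives of $g$ vanish at the centre. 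Calling $N$ the resulting map, compatibility with the $DC_0$ structure is clear, since $N$ differs from $\tphi$ by a smooth diffeomorphism with non-vanishing Jacobian near $x$ and the $DC_0$ class is stable under such compositions. Property (a) then follows at once: differentiability of $g$ together with $\partial_k g_{ij}=0$ at the centre gives $g_{ij}(N(y))-\delta_{ij}=o(\|N(y)-N(x)\|_2)=o(|xy|)$, the last equality using the biLipschitz bound of Theorem~\ref{diff}.

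\emph{Property (b).} Here I would use that on $X^*$ the distance $d$ coincides with the Riemannian distance of $g$. Writing $\epsilon=\max\{|xy|,|xz|\}$, for the upper bound I pull back the Euclidean segment $[N(y),N(z)]$ and bound its $g$-length by $(1+o(\epsilon))\|N(y)-N(z)\|_2$ using (a), noting that this segment stays in a region where $|xw|\lesssim\epsilon$; for the lower bound I push the Alexandrov geodesic $[yz]$ (which stays close to $x$ when $y,z$ are close to $x$) into coordinates and compare its Euclidean length with its $g$-length, again via (a). Combining the two inequalities and $\|N(y)-N(z)\|_2\sim|yz|$ gives (b).

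\emph{Properties (c) and (d), and the main obstacle.} For (c) I would invoke the differentiability of $N$ at $x$ (the first variation formula for the averaged distance functions, as in the proof of Theorem~\ref{diff}, yields a linear directional derivative at the regular point $x$): along a geodesic $\gamma$ from $x$ realizing $\uparrow_x^y$ one has $N(\gamma(t))=N(x)+t\,dN_x(\uparrow_x^y)+o(t)$, so the normalized chord $N(y)-N(x)$ converges to $dN_x(\uparrow_x^y)=\uparrow_{N(x)}^{N(y)}$ and the Euclidean angle tends to $0$. For (d) I observe that both $\tilde{\angle}\,yxz$ and $\angle_E N(y)N(x)N(z)$ are determined by the three pairwise distances through a law of cosines (in the space form of curvature $k$, respectively in $\R^2$); substituting the comparison (b) and using that the space-form law of cosines approaches the Euclidean one at small scales yields the estimate, the hypothesis that the angles are bounded away from $0$ guaranteeing that the error propagates linearly. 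I expect the main obstacle to be (b): turning the pointwise estimate (a) into a uniform length comparison requires keeping both the geodesic $[yz]$ and the pulled-back segment inside the region where the metric error is $o(\epsilon)$, which rests on the fact that geodesics between points near a regular point stay near it; once (b) is in place, (c) and (d) are comparatively soft.
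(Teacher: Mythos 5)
Your construction of $N$ and your treatment of (a) and (d) are in substance the paper's: the paper obtains $N$ by composing a chart with the smooth normal coordinates of the auxiliary metric $\tilde g_{ij}(\bar z)=g_{ij}(\bar x)+\sum_k(\bar z-\bar x)_k\frac{\partial g_{ij}}{\partial x_k}(\bar x)$, which is exactly your linear-plus-quadratic correction in disguise; (a) then follows from differentiability of $g$ at $x$ in both treatments, and (d) is obtained in both by playing the space-form and Euclidean laws of cosines against estimate (b). The genuine problems are in (b) and (c).

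For (b), the step ``bound the $g$-length of the pulled-back segment $[N(y),N(z)]$'' does not parse as written: that segment may meet the image of the singular set $\Si$ (which is $\H^{N-1}$-negligible but can be dense), where $g$ is undefined and where (a) gives no information; moreover the identification of metric length with $\int\sqrt{g(\sigma',\sigma')}\,ds$ (the paper's \eqref{long}, due to Otsu--Shioya) is only available for $C^1$ curves contained in $Reg(X)$, so your blanket appeal to ``$d$ coincides with the Riemannian distance of $g$'' assumes exactly what must be justified. The paper's fix is a concrete idea missing from your proposal: by the coarea inequality, $\H^{N-1}(Sing(X)\cap R)=0$ forces almost every segment parallel to $[N(y)N(z)]$ to avoid $Sing(X)$, so one runs the length comparison on a nearby parallel segment of the same length and concludes by continuity; symmetrically, the other inequality is proved only for $y,\,z\in Reg(X)$ (enough by density), using the Otsu--Shioya result that the geodesic read in coordinates is $C^1$ --- a regularity issue your proposal also leaves untouched.

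For (c), your argument proves a weaker statement than what is claimed. The first variation formula gives, for each \emph{fixed} geodesic $\gamma$, $N(\gamma(t))=N(x)+t\,dN_x(\gamma'(0))+o(t)$, with an error depending a priori on the direction $\gamma'(0)$. Property (c) is a limit over all $y\to x$: both the chord $N(x)N(y)$ and the direction $\uparrow_x^y$ vary with $y$, so convergence along each fixed ray does not imply it --- one would need the $o(t)$ to be uniform over the space of directions, and no such uniformity follows from directional differentiability alone. This is precisely why the paper derives (c) from (b) rather than from differentiability: taking dyadic points $m_k=\gamma(2^{-k})$ on the geodesic from $x$ to $y$, estimate (b) forces $\cos\angle_E N(x)N(m_k)N(m_{k-1})+1\le o(|xm_{k-1}|)$, hence $\angle_E N(m_k)N(x)N(m_{k-1})\le\sqrt{|xm_{k-1}|}\,\ep(|xm_{k-1}|)$, and summing the geometric series yields the \emph{uniform} bound $\angle_E(\uparrow_{N(x)}^{N(y)},N(x)N(y))\lesssim\sqrt{|xy|}$; per-ray differentiability enters only to identify $\lim_k N(x)N(m_k)$ with $\uparrow_{N(x)}^{N(y)}$. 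So, contrary to your closing assessment, (c) is not ``comparatively soft'' once (b) is in place: it is the second point where a quantitative argument is required, and it is absent from your proposal.
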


\begin{proof}
In order to define $N$, we consider the bilinear form $(\tilde{g}_{ij})_{i,j}$ defined in the chart $F$ by the formula
$$ \tilde{g}_{ij}(\bar{z})= g_{ij}(\bar{x})  + \sum_{k=1}^N (\bar{z}-\bar{x})_k \frac{\partial g_{ij}}{\partial x_k}(\bar{x})$$
being $g_{ij}$ the components of the metric read in $F$ and $\bar x=F(x)$, $\bar{z}=F(z)$. For $z$ close to $x$, $(\tilde{g}_{ij})_{i,j}$ is a smooth
Riemannian metric whose value at $\bar{x}$ coincides with that of $(g_{ij})_{i,j}$  and the same property is true for the first derivatives
of $\tilde{g}_{ij}$. Therefore, we can find a smooth normal coordinate system for $\tilde{g}_{ij}$ and we call $N$ the coordinate system obtained by composing it with $F$.  
Note that in particular $N$ is compatible with the $DC_0$ structure. By definition of a normal coordinate system, the metric components of $\tilde{g}$ read in this normal coordinate system satisfy (we keep the same notation for simplicity)
$$ g_{ij}(\bar{x}) =\delta_{ij} \mbox{    and    }    \frac{\partial g_{ij}}{\partial x_k}(\bar{x})=0 \mbox{       for all } i,j,k.$$
Therefore, by construction, the same properties hold for $g$ at $N(x)$ when read in the chart $N$. The first item then immediately follows. Let us prove the second one.\par

Set $\gamma$ a constant speed geodesic defined on $[0,1]$ from $y$ to $z$. Note that by density, it is sufficient to prove (b) for $y,\,z \in Reg(X)$. According to \cite{OS}, $F(\gamma)$ belongs to $C^1((0,1))$, therefore $\sigma := N(\gamma)$ belongs to $C^1((0,1))$ as well. More generally, Otsu and Shioya proved \cite[Proposition 6.2]{OS} that the length $L(\theta)$ of any $C^1$ curve $\theta: [0,1] \rightarrow Reg(X)$ satisfies
\begin{equation}\label{long}
 L(\theta) = \int_0^1 \sqrt{g_{\theta(s)} (\theta'(s),\theta'(s))} \,ds.
 \end{equation}

Now, we infer from (a) the following estimate
$$ | \,||\sigma'(s)||_{2} - |\sigma'(s)|_g| = \frac{| \,||\sigma'(s)||_{2}^2 - |\sigma'(s)|_g^2|}{ ||\sigma'(s)||_{2} + |\sigma'(s)|_g} \leq o(\max\{|xy|,|xz\})|yz|$$
where we use the fact that $N$ is in particular biLipschitz (thus $o(|\cdot|)=o(||\cdot||_2)$). Integrating the inequality above leads to
$$ ||N(y)N(z)||_2 \leq L_{||\cdot||_2}(\sigma) \leq |yz| + o(\max\{|xy|,|xz\})|yz|$$
where $L_{||\cdot||_2}(\sigma) $ stands for the length of $\sigma$ with respect to Euclidean metric. The converse inequality is proved in the same way, up to replace the segment $[N(y)N(z)]$ by an arbitrary close and parallel segment of same (Euclidean) length. Indeed, first notice that the Coarea inequality 
(see for instance \cite[Theorem~2.10.25]{federer69}) yields
$$ 0=\H^{N-1} (Sing(X) \cap R) \geq \int_{\R^{N-1}} \H^{0}(Sing(X) \cap p^{-1}(z))dz,$$
being $R$ a cylinder with axis $[N(y)N(z)]$, and $p$ the projection parallel to $N(y)N(z)$. Therefore, by continuity, we can assume without loss of generality that $[N(y)N(z)]\cap Sing(X)= \emptyset$ so that the equality (\ref{long}) holds and the previous argument applies.\par

We now prove (c). For any integer $k\geq 0$, set $m_k:= \gamma (\frac{1}{2^k})$, being $\gamma$ a constant speed geodesic from $x$ to $y$, parameterized on $[0,1]$ (so that $m_0=y$, $m_1$ is a midpoint of $x$ and $y$ and so on). Note that for any vector $u$, 
$$\lim_{k \rightarrow + \infty} \angle_E (N(x)N(m_k), u) = \angle_E (\uparrow_{N(x)}^{N(y)}, u).$$
Let us fix $k\geq1$. Elementary computations together with (b) lead to
$$ \cos \angle_E N(x)N(m_k)N(m_{k-1}) +1 \leq o(|xm_{k-1}|).$$
Therefore, for $y$ sufficiently close to $x$, we get
$$ \angle_E N(m_k)N(x)N(m_{k-1}) \leq \sqrt{|xm_{k-1}|} \ep(|xm_{k-1}|)$$
where $\lim_{t\rightarrow 0} \ep(t)=0$. By definition, $|xm_{k-1}| =\frac{|xy|}{2^{k-1}}$. Moreover, up to reducing $|xy|$, we can assume that
$|\ep(s)| \leq 1$ on $[0, |xy|]$. Consequently, we get 

$$ \angle_{E} ( \uparrow_{N(x)}^{N(y)}, N(x)N(y)) \leq \sqrt{|xy|}\, \sum_{k=0}^{\infty} \frac{1}{2^{\frac{k}{2}}} \underset{y\rightarrow x}{\longrightarrow} 0$$
and the proof of (c) is complete.\par 

It remains to prove (d). First, notice that since the angles of $N(x)N(y)N(z)$ are assumed to be bounded away from $0$, the error terms $o(|N(x)N(y)|), o(|N(x)N(z)|),o(|N(z)N(y)|)$ are all the same and the property remains true if we use $|xy|,|xz|,|yz|$ instead since $N$ is locally a biLipschitz map. Second, since these distances are assumed to be small, the law of cosines in the space form of curvature $k$ implies that
\begin{equation}\label{cosine}
 |\cos \tilde{\angle} yxz -\cos \tilde {\angle}_0 yxz| \leq O(|xy|^2)
 \end{equation}
being $\tilde{\angle}_0 yxz$ the angle at $x$ of a comparison triangle in $\R^2$. Now, using the Euclidean law of cosines for both $N(x)N(y)N(z)$ and a Euclidean comparison triangle of $xyz$,  and the estimate (2) (recall that $o(|xy|)=o(|xz|)=o(|yz|)$) gives $|\cos \angle_E N(y)N(x)N(z) -\cos \tilde{\angle}_0 yxz| = o(|xy|^2)$. Finally, combining this with (\ref{cosine}) yields
$$ |\cos \tilde{\angle} yxz -\cos \angle_E N(y)N(x)N(z)| \leq O(|xy|^2).$$
Using again the fact that the angles of $N(x)N(y)N(z)$ are bounded away from $0$ (and $\pi$ as well) allows us to conclude.
\end{proof}

\subsubsection{Proof of Proposition \ref{Taylor}}

We will use the notation introduced in Proposition \ref{normal}. By definition, a $DC_0$ Riemannian manifold can be covered by countably many domains of (biLipschitz) charts, therefore it suffices to prove the result locally. Also, recall Theorem \ref{diff} asserting that the metric $g$ is differentiable $\H^N$-almost everywhere (see Remark \ref{goodp}). 
Therefore, according to Proposition \ref{normal}, it suffices to prove the result at every point $x \in Reg(X)$ where $g$ is differentiable and the first and second derivatives of $f$ do exist, and we can use a normal coordinate system $N$ around $x$ to proceed. (Strictly speaking, to guarantee that it is indeed $\hes^{ac} f$ that appears in the Taylor expansion, we have to discard a $\H^N$-negligible set of points where the second derivatives of $f$ and the derivatives of the metric are not approximately continuous, and $|\hes^{s} f| (B(x,r)= o(r^N)$ is not satisfied. For more details, we refer to the proof of Alexandrov's theorem p242 in \cite{evans} or to the proof of Theorem 3.83 in \cite{AFP}.) For the rest of the proof, we fix such a point $x$ and a normal coordinate system $N$ around $x$.
We have (writing $f$ instead of $f\circ N^{-1}$)
$$ f (y) = f(x) + d_{N(x)}f (N(x)N(y)) + \frac{1}{2} D^2_{N(x)} f (N(x)N(y), N(x)N(y)) + o (|N(x)N(y)|^2).$$

Note that in a normal coordinate system around $x$, $ D^2_{N(x)} f= \hes_{N(x)} f$. Therefore, in view of Proposition~\ref{normal}, we are done if we can prove
\begin{equation}\label{angle2}
 \angle_E  \uparrow_{N(x)}^{N(y)}, N(x)N(y) = o(|xy|).
 \end{equation}
  
To this aim, we first claim that
$$  | \angle_E  \uparrow_{N(x)}^{N(y)}, \uparrow_{N(x)}^{N(z)} - \angle_E N(y)N(x)N(z)|= | \angle  \uparrow_x^y, \uparrow_x^z - \angle_E N(y)N(x)N(z)| = o(|yz|)$$
when all the angles of $N(x)N(y)N(z)$ are bounded away from $0$.\par 

The first equality follows from the definition of normal coordinate system.To prove the second one, take a point $p$ such that $N(p)$ is the plane $N(y)N(x)N(z)$, $N(x)$ is contained in the triangle $N(p)N(y)N(z)$ and all the angles formed by these four points are bounded away from zero. Recall that for any triple $x,\,y,\,z$, one has 
$\angle  \uparrow_x^y, \uparrow_x^z \geq  \tilde{\angle}  yxz $ by definition of Alexandrov space. Property (d) in Proposition~\ref{normal} then gives
\begin{eqnarray*}
&& \angle  \uparrow_x^y, \uparrow_x^z + \angle  \uparrow_x^z, \uparrow_x^p + \angle  \uparrow_x^p, \uparrow_x^y \\&\geq&  
 \angle_E N(y)N(x)N(z) +  \angle_E N(z)N(x)N(p) + \angle_E N(p)N(x)N(y) + o(|yz|) \\&=& 2\pi + o(|yz|).
  \end{eqnarray*}

On the other hand, $ \angle  \uparrow_x^y, \uparrow_x^z + \angle  \uparrow_x^z, \uparrow_x^p + \angle  \uparrow_x^p, \uparrow_x^y \leq 2\pi$ (this is a consequence of the quadruple condition, see for instance \cite{bbi}) and the claim is proved.\par

The proof of (\ref{angle2}) is by contradiction. Take a point $y_1$ such that $|xy_1|= {|xy|}/{2}$  and the direction $\uparrow_{N(x)}^{N(y)}$ is between $N(x)N(y)$ and $N(x)N(y_1)$. In particular, this gives
\begin{eqnarray*}
\angle_E \uparrow_{N(x)}^{N(y_1)}, N(x)N(y_1) &\geq& \angle_E \uparrow_{N(x)}^{N(y_1)}, \uparrow_{N(x)}^{N(y)} - \angle_E \uparrow_{N(x)}^{N(y)}, N(x)N(y_1) \\
																	 &\geq &  \angle_E \uparrow_{N(x)}^{N(y_1)}, \uparrow_{N(x)}^{N(y)} - \angle_E N(y)N(x)N(y_1) + \angle_E \uparrow_{N(x)}^{N(y)}, N(x)N(y) \\
																	  & \geq & o(|xy|) + \angle \uparrow_{N(x)}^{N(y)}, N(x)N(y).
\end{eqnarray*}
Thus if (\ref{angle2}) were false, then we could construct a sequence $y_k \rightarrow x$ with $\angle \uparrow_{N(x)}^{N(y_k)}, N(x)N(y_k) $ bounded away from $0$ and this would contradict (c) in Proposition~\ref{normal}.


\section*{Appendix: The transformation law of the Christoffel symbols}

The goal of this part is to prove, in our nonsmooth setup, the following classical formula (\ref{Chris2}) for the transformation of Christoffel symbols. 
\begin{equation}\label{Chris2}
\Gamma_{ab}^k = \sum_{\theta} \frac{\partial F_k ^{-1}}{\partial y_{\theta}} \circ F \left( \sum_{m,t } \frac{\partial F_m}{\partial x_a} \frac{\partial F_t}{\partial x_b} F^* (\tilde{\Gamma}_{mt}^{\theta})\right) \\
+ \sum_{\theta} \frac{\partial  F_k^{-1}}{\partial y_{\theta}} \circ F \,\frac{\partial ^2 F_{\theta}}{\partial x_a \partial x_b}, 
\end{equation}
used in the proof of Proposition~\ref{compa} (see (\ref{TL1})), where $F:\hat{U}\to\hat{V}$ is a transition map
and $\Gamma$, $\tilde\Gamma$ denote the Christoffel symbols in $\hat{U}$ and $\hat{V}$ respectively.

We have not been able to locate a proof of (\ref{Chris2}) in the literature which is only based on (\ref{metriccoor}). So we provide such a proof for the sake of completeness. The tools we need to do so are basically those obtained in Corollary~\ref{corteclem}. Let us recall that, by definition, 
$$ 2\, \Gamma_{ij}^k = \sum_l g^{kl}\left( \frac{\partial g_{li}}{\partial x_j} + \frac{\partial g_{lj}}{\partial x_i} - \frac{\partial g_{ij} }{\partial x_l}\right).$$
For the sake of clarity, we shall use as much as possible matrix products to write down the formulas. 
For instance, the above formula can be rephrased in the following way
$$ 2\,\Gamma^k_{ij} = \left(G^{-1}\frac{\partial G}{\partial x_j}\right)_{ki}
+ \left(G^{-1}\frac{\partial G}{\partial x_i}\right)_{kj} - \sum_l g^{kl}\frac{\partial g_{ij} }{\partial x_l},$$
where $G$ denotes the matrix $(g_{ij})$, i.e. the metric in the coordinate system of $\hat{U}$.

We set $\Delta_j = G^{-1}\frac{\partial G}{\partial x_j}$. Denoting by $\tilde G=(\tilde g)_{ij}$ the metric
in the coordinate system of $\hat{V}$, we shall use the fact that (denoting by $^t M$ matrix transposition)
\begin{equation}\label{metriccoor}
\begin{array}{ccl}
G & = &  ^t\big( dF\big) \big(\tilde{G} \circ F \big) \left(dF\right)\\
&\mbox{and}&\\
G^{-1} & = &  \left( d_{F(\cdot)} F^{-1}\right) \big( \tilde{G}^{-1} \circ F\big) ^t\left(d_{F(\cdot) } F^{-1}\right).
\end{array}
\end{equation}
Thus,
\begin{equation}\label{dermetriccoor}
 \big( \frac{\partial G}{\partial x_j}\big) = ^t\big( \frac{\partial}{\partial x_j} (dF)\big)\big(\tilde{G} \circ F\big) \big(dF\big) + ^t\big( dF\big) \big(\frac{\partial}{\partial x_j} (\tilde{G}\circ F) \big) \big( dF\big) + ^t\big( dF\big) \big( \tilde{G} \circ F\big) \big(\frac{\partial}{\partial x_j} (dF)\big).
 \end{equation}
 Accordingly, we decompose $\Delta_j$ into three terms:
$$ \Delta_j = A_j + B_j +C_j,$$
where
\begin{equation}\label{expres}
 \left\{
 \begin{array}{rcl}
 \vspace{0.3cm} 
A_j & =&  \big( d_{F(\cdot)} F^{-1}\big) \big(\tilde{G}^{-1}\circ F \big)^t \big( d_{F(\cdot)}F^{-1}\big)^t\big( \frac{\partial}{ \partial x_j} (dF) \big) \big( \tilde{G}\circ F\big) \big( dF\big) \\
\vspace{0.3cm}
B_j & = & \big(  d_{F(\cdot)} F^{-1}\big) \big(\tilde{G}^{-1}\circ F \big)\big( \frac{\partial}{ \partial x_j }(\tilde{G} \circ F) \big) \big( dF\big)\\
C_j & =& \big( d_{F(\cdot)}F^{-1}\big) \big( \frac{\partial }{\partial x_j} (dF) \big).
\end{array}
\right.
\end{equation}

We first consider $C_j$ and we compute $(C_j)_{ki}$:
$$ (C_j)_{ki}= \sum_s \frac{\partial F_k^{-1}}{\partial y_s} \circ F \frac{\partial^2 F_s}{\partial x_j \partial x_i }.$$
Using the symmetry of the second distributional derivatives, we get 
$$  (C_j)_{ki}= (C_i)_{kj}= \sum_s \frac{\partial F_k^{-1}}{\partial y_s} \circ F\, \frac{\partial^2 F_s}{\partial x_j \partial x_i }.$$

Now, we treat the term $B_j$. We shall use that for $h \in\GBV_0$ (see (\ref{compoder}) in Corollary~\ref{corteclem}), the chain rule formula holds in the following form
\begin{equation}\label{crf}
 \frac{\partial }{\partial x_i}  ( h \circ F)= \sum_{s=1}^p \frac{\partial F_s}{\partial x_i} F^* \big( \frac{\partial h}{\partial y_s}\big).
 \end{equation}
Thus,
$$ (B_j)_{ki} = \sum_m \frac{\partial F_m}{\partial x_j}\Big[\big(d_{F(\cdot)} F^{-1}\big)\big(\tilde{G} ^{-1} \circ F \big) \big( F^* \big(\frac{\partial \tilde{G}}{\partial y_m}\big)\big) \big( dF\big) \Big]_{ki},$$
and we get by expanding this expression
$$   (B_j)_{ki} = \sum_{m,s,t} \frac{\partial F_m}{\partial x_j} \frac{\partial F_k^{-1}}{\partial y_s} \circ F \left[ \big( \tilde{G} ^{-1} \circ F \big) \big(F^*\big(\frac{\partial \tilde{G}}{\partial y_m}\big) \big)\right]_{st} \frac{\partial F_t}{\partial x_i}.$$

Similarly,
$$   (B_i)_{kj} = \sum_{t,s,m} \frac{\partial F_t}{\partial x_i} \frac{\partial F_k^{-1}}{\partial y_s} \circ F \left[ \left( \tilde{G} ^{-1} \circ F \right) \left(F^*\left(\frac{\partial \tilde{G}}{\partial y_t}\right) \right)\right]_{sm} \frac{\partial F_m}{\partial x_j}.$$
By adding the two formulas above, we get
$$  (B_j)_{ki} +(B_i)_{kj} = \sum_{m,s,t} \frac{\partial F_m}{\partial x_j} \frac{\partial F_t}{\partial x_i} \frac{\partial F_k^{-1}}{\partial y_s} \circ F\, 
F^* \left[ \left( \tilde{G} ^{-1} \frac{\partial \tilde{G}}{\partial y_m}\right)_{st} + \left( \tilde{G} ^{-1}\frac{\partial \tilde{G}}{\partial y_t}\right)_{sm} \right],$$
where we use the fact that for $\alpha \in\GBV_0$, $\mu\in\GM$, $F^*(\alpha \mu) = (\alpha \circ F) \, F^*(\mu)$ (see Corollary~\ref{corteclem}).

Let us notice that 
$$\begin{array}{rcl}
F^* \left( \big( \tilde{G} ^{-1}  {\displaystyle\frac{\partial \tilde{G}}{\partial y_m}}\big)_{st} + \big( \tilde{G} ^{-1}  
\displaystyle{\frac{\partial \tilde{G}}{\partial y_t}}\big)_{sm} \right) 
&=&    F^*\left( 2 \tilde{\Gamma}_{mt}^s + \sum_{\theta} \tilde{g}^{s\theta} 
\displaystyle{\frac{\partial \tilde{g}_{mt}}{\partial y_{\theta}}}\right) \\
&=& F^*\left( 2 \tilde{\Gamma}_{mt}^s\right) + \sum_{\theta} \tilde{g}^{s\theta} \circ F \, F^* \left(\displaystyle{\frac{\partial \tilde{g}_{mt}}{\partial y_{\theta}}}\right). 
\end{array}$$

Therefore, if we combine all the equalities above, we get 
\begin{multline*}
2 \big( \Gamma_{\cdot \cdot}^k\big)_{ij} = 2 \sum_s \frac{\partial F_k^{-1}}{\partial y_s} \circ F \frac{\partial^2 F_s}{\partial x_j \partial x_i } 
+ 2       \sum_{m,s,t} \frac{\partial F_m}{\partial x_j} \frac{\partial F_t}{\partial x_i}  \frac{\partial F_k^{-1}}{\partial y_s} \circ F\, F^* \left( \tilde{\Gamma}_{mt}^s \right) +\\
           \sum_{m,s,t,\theta}    \frac{\partial F_m}{\partial x_j} \frac{\partial F_t}{\partial x_i} \frac{\partial F_k^{-1}}{\partial y_s} \circ F\,     \tilde{g}^{s\theta} \circ F \, F^* \left(\frac{\partial \tilde{g}_{mt}}{\partial y_{\theta}}\right)  
    + \big(A_j\big)_{ki} + \big(A_i\big)_{kj} - \sum_u g^{ku}\frac{\partial g_{ij}}{\partial x_u}.
\end{multline*}
         
The proof is then complete if we prove that 
 \begin{equation}\label{topr}
    \sum_{m,s,t,\theta}    \frac{\partial F_m}{\partial x_j} \frac{\partial F_t}{\partial x_i} \frac{\partial F_k^{-1}}{\partial y_s} \circ F\,     \tilde{g}^{s\theta} \circ F \, F^* \left(\frac{\partial \tilde{g}_{mt}}{\partial y_{\theta}}\right)  
    + \big(A_j\big)_{ki} + \big(A_i\big)_{kj} - \sum_u g^{ku}\frac{\partial g_{ij}}{\partial x_u} =0.
  \end{equation} 
    
To this aim, we write according to (\ref{expres}) 
$$ (A_j)_{ki}= \sum_{s,t,u,v,w} \frac{\partial F_k^{-1}}{\partial y_s} \circ F \, \tilde{g}^{st} \circ F \, \frac{\partial F_u^{-1}}{\partial y_t}\circ F\, \frac{\partial^2 F_v}{\partial x_j \partial x_u} \,\tilde{g}_{vw} \circ F \,\frac{\partial F_w}{\partial x_i}.$$

By writing $(A_i)_{kj}$ the same way (exchanging the role of $v$ and $w$) and then adding the two expressions, we get
\begin{equation}\label{Aterm}
(A_i)_{kj} + (A_j)_{ki} = \sum_{s,t,u,v,w} \frac{\partial F_k^{-1}}{\partial y_s} \circ F \, \tilde{g}^{st} \circ F \, \tilde{g}_{vw} \circ F \, \frac{\partial F_u^{-1}}{\partial y_t}\circ F \,\frac{\partial}{ \partial x_u} \Big(\frac{\partial F_w}{\partial x_i} \frac{\partial F_v}{\partial x_j}\Big).
\end{equation}

To conclude, it remains to rewrite $\sum_l g^{kl}\frac{\partial g_{ij}}{\partial x_l}$. Using (\ref{dermetriccoor}), we obtain 
$$ \frac{\partial g_{ij}}{\partial x_u} = \sum_{w,v} \frac{\partial}{\partial x_u} \big( \frac{\partial F_w}{\partial x_i} \frac{\partial F_v}{\partial x_j}\big) \tilde{g}_{wv} \circ F \, + \sum_{w,v} \frac{\partial F_w}{\partial x_i} \frac{\partial F_v}{\partial x_j} \frac{\partial }{\partial x_u} \big( \tilde{g}_{wv} \circ F\big).$$

Then, combining this together with (\ref{metriccoor}):
$$ g^{ku} = \sum_{s,t} \frac{\partial F_k^{-1}}{\partial y_s} \circ F \, \tilde{g}^{st} \circ F \, \frac{\partial F_u^{-1}}{\partial y_t} \circ F
$$
yield,
\begin{multline}\label{DF}
\sum_u g^{ku} \frac{\partial g_{ij}}{\partial x_u} = \underbrace{\sum_{u,w,v,s,t}  \frac{\partial F_k^{-1}}{\partial y_s} \circ F \, \frac{\partial F_u^{-1}}{\partial y_t} \circ F \, \frac{\partial}{\partial x_u} \big( \frac{\partial F_w}{\partial x_i} \frac{\partial F_v}{\partial x_j}\big) \tilde{g}_{wv} \circ F \, \tilde{g}^{st} \circ F }_{(D)}  \\
+  \underbrace{ \sum_{u,w,v,s,t}  \frac{\partial F_k^{-1}}{\partial y_s} \circ F \, \frac{\partial F_u^{-1}}{\partial y_t} \circ F \, \frac{\partial F_w}{\partial x_i} \frac{\partial F_v}{\partial x_j}
\frac{\partial }{\partial x_u} \big( \tilde{g}_{wv} \circ F\big) \tilde{g}^{st} \circ F
}_{(E)} .
\end{multline}

According to (\ref{Aterm}), $(A_i)_{kj} + (A_j)_{ki}= (D)$. To complete the proof, we apply (\ref{crf}) to $\tilde{g}_{wv}$  in $(E)$. This gives
\begin{align*}
(E) & =  \sum_{u,w,v,s,t,\theta}  \frac{\partial F_k^{-1}}{\partial y_s} \circ F \, \underbrace{\frac{\partial F_u^{-1}}{\partial y_t} \circ F \, \frac{\partial F_{\theta}}{\partial x_u}}_{\sum_u = \delta_{t\theta}}\frac{\partial F_w}{\partial x_i} \frac{\partial F_v}{\partial x_j}
 F^*\big( \frac{\partial \tilde{g}_{wv}}{\partial y_{\theta}} \big) \tilde{g}^{st} \circ F
 \\
 &=  \sum_{w,v,s,\theta} \frac{\partial F_k^{-1}}{\partial y_s} \circ F \,\frac{\partial F_w}{\partial x_i} \frac{\partial F_v}{\partial x_j}
 F^*\big( \frac{\partial \tilde{g}_{wv}}{\partial y_{\theta}} \big) \tilde{g}^{s\theta} \circ F.
\end{align*} 
 Therefore $(E)$ coincides with the first term in (\ref{topr}) and the proof is complete.



\bibliographystyle{plain}

\begin{thebibliography}{10}

\bibitem{Alex39}
A.~D. Alexandroff.
\newblock Almost everywhere existence of the second differential of a convex
  function and some properties of convex surfaces connected with it.
\newblock {\em Leningrad State Univ. Annals [Uchenye Zapiski] Math. Ser.},
  6:3--35, 1939.

\bibitem{AB}
Luigi Ambrosio and J{\'e}r{\^o}me Bertrand.
\newblock Work in progress.

\bibitem{ABSurface}
Luigi Ambrosio and J{\'e}r{\^o}me Bertrand.
\newblock On the regularity of {A}lexandrov surfaces with curvature bounded
  below.
\newblock {\em Preprint}, 2014.

\bibitem{AFP}
Luigi Ambrosio, Nicola Fusco, and Diego Pallara.
\newblock {\em Functions of bounded variation and free discontinuity problems}.
\newblock Oxford Mathematical Monographs. The Clarendon Press Oxford University
  Press, New York, 2000.

\bibitem{AGSAP}
Luigi Ambrosio, Nicola Gigli, and Giuseppe Savar{\'e}.
\newblock Bakry-\'{E}mery curvature-dimension condition and Riemannian
  Ricci curvature bounds.
\newblock  Annals of Probability, {\bf 43} (2015), 339--404.

\bibitem{Bakry-94}
D.~Bakry.
\newblock {\em On Sobolev and logarithmic Sobolev inequalities for
              Markov semigroups.}
\newblock New trends in stochastic analysis (Charingworth, 1994), World Sci. Publ., River Edge, NJ, 1997, 43--75.

\bibitem{bbi}
Dmitri Burago, Yuri Burago, and Sergei Ivanov.
\newblock {\em A course in metric geometry}, volume~33 of {\em Graduate Studies
  in Mathematics}.
\newblock American Mathematical Society, Providence, RI, 2001.

\bibitem{bgp}
Yuri Burago, Mikha\"il Gromov, and Grigori Perel{\cprime}man.
\newblock A. {D}. {A}leksandrov spaces with curvatures bounded below.
\newblock {\em Uspekhi Mat. Nauk}, 47(2(284)):3--51, 222, 1992.

\bibitem{evans}
Lawrence~C. Evans and Ronald~F. Gariepy.
\newblock {\em Measure theory and fine properties of functions}.
\newblock Studies in Advanced Mathematics. CRC Press, Boca Raton, FL, 1992.

\bibitem{federer69}
Herbert~Federer. 
\newblock {\em Geometric Measure Theory.} 
Springer, Berlin, 1969.

\bibitem{GHL}
Sylvestre Gallot, Dominique Hulin, and Jacques Lafontaine.
\newblock {\em Riemannian geometry}.
\newblock Universitext. Springer-Verlag, Berlin, third edition, 2004.

\bibitem{Gigli}
Nicola~Gigli.
\newblock {\em Nonsmooth Differential Geometry - An approach tailored for spaces with Ricci curvature
bounded from below.}
\newblock ArXiv Preprint 1407.0809, 2014.

\bibitem{hartman}
Philip Hartman.
\newblock On functions representable as a difference of convex functions.
\newblock {\em Pacific J. Math.}, 9:707--713, 1959.

\bibitem{kuwae}
Kazuhiro Kuwae, Yoshiroh Machigashira, and Takashi Shioya.
\newblock Sobolev spaces, {L}aplacian, and heat kernel on {A}lexandrov spaces.
\newblock {\em Math. Z.}, 238(2):269--316, 2001.

\bibitem{mantegazza}
Carlo Mantegazza, Mascellani Giovanni, and Uraltsev Gennady.
\newblock On the distributional {H}essian of the distance function.
\newblock {\em Arxiv, arXiv:1303.1421}.

\bibitem{OS}
Yukio Otsu and Takashi Shioya.
\newblock The {R}iemannian structure of {A}lexandrov spaces.
\newblock {\em J. Differential Geom.}, 39(3):629--658, 1994.

\bibitem{DC}
Grigori Perel{\cprime}man.
\newblock D{C} {S}tructure on {A}lexandrov {S}pace.
\newblock {\em Preprint}, 1994.

\bibitem{petersen}
Peter Petersen.
\newblock {\em Riemannian geometry}, volume 171 of {\em Graduate Texts in
  Mathematics}.
\newblock Springer, New York, second edition, 2006.

\bibitem{SCPet}
Anton Petrunin.
\newblock Semiconcave functions in {A}lexandrov's geometry.
\newblock In {\em Surveys in differential geometry. {V}ol. {XI}}, volume~11 of
  {\em Surv. Differ. Geom.}, pages 137--201. Int. Press, Somerville, MA, 2007.

\bibitem{LVPet}
Anton Petrunin.
\newblock Alexandrov meets {L}ott-{V}illani-{S}turm.
\newblock {\em M\"unster J. Math.}, 4:53--64, 2011.

\bibitem{SpivakI}
Michael Spivak.
\newblock {\em A comprehensive introduction to differential geometry. {V}ol.
  {I}}.
\newblock Publish or Perish, Inc., Wilmington, Del., second edition, 1979.

\bibitem{Sturm}
Karl-Theodor Sturm.
\newblock {\em Gradient Flows for Semiconvex Functions on Metric Measure Spaces - Existence, Uniqueness and Lipschitz Continuity.}
\newblock Arxiv Preprint 1410.3966.

\end{thebibliography}

\def\cprime{$'$} \def\cprime{$'$} \def\cprime{$'$} \def\cprime{$'$}
  \def\cprime{$'$} \def\cprime{$'$}

\end{document}